\documentclass[11pt,reqno]{amsart}
\usepackage{amscd,amssymb,amsmath,amsthm}
\usepackage[english]{babel}
\usepackage[T1]{fontenc}
\usepackage[arrow,matrix]{xy}
\usepackage{graphicx,subfigure,tikz,tikz-3dplot}
\usetikzlibrary{positioning,matrix,arrows,calc}
\usepackage{cite}
\usepackage{dsfont}
\usepackage{enumitem}
\usepackage{bbm}%fuer die dicke Indikatorfunktionseins -> \mathbbm
\usepackage{hyperref}
\hypersetup{
	colorlinks=true,
	linkcolor=blue}

\topmargin=0.1in \textwidth6in \textheight7.8in
\oddsidemargin=0.3in \evensidemargin=0.3in \theoremstyle{plain}
\newtheorem{theorem}{Theorem}[section]
\newtheorem{lemma}[theorem]{Lemma}
\newtheorem{proposition}[theorem]{Proposition}
\newtheorem{corollary}[theorem]{Corollary}

\theoremstyle{definition}
\newtheorem{remark}[theorem]{Remark}

\newtheorem{example}[theorem]{Example}

\numberwithin{equation}{section} \setcounter{tocdepth}{1}

%%%%%%%%%%%%%%%%%%%%%%%%%%%%%%%%%%%%%%%%%%%%%%%%%%%%%%%%%

%%%%%%%%%%%%%%%%%%%%%%%%%%%%%%%%%%%%%%%%%%%%%%

%%%%%%%%%%%%%%%%%%%%%%%%%%%%%%

\begin{document}
	
	\title[]{Iterated function systems of affine expanding and contracting maps on the unit interval}
	\author[Ale Jan Homburg, Charlene Kalle]{Ale Jan Homburg and Charlene Kalle}
	
	\address{A.J. Homburg\\ KdV Institute for Mathematics, University of Amsterdam, Science park 107, 1098 XG Amsterdam, Netherlands\newline Department of Mathematics, VU University Amsterdam, De Boelelaan 1081, 1081 HV Amsterdam, Netherlands}
	\email{a.j.homburg@uva.nl}
	
	\address{C.C.C.J. Kalle\\ Mathematical Institute, University of Leiden, PO Box 9512, 2300 RA Leiden, The Netherlands}
	\email{kallecccj@math.leidenuniv.nl}

\begin{abstract}
We analyze the two-point motions of iterated function systems on the unit interval generated by expanding and contracting affine maps, where the expansion and contraction rates are determined by a pair $(M,N)$ of integers. 

This dynamics depends on the Lyapunov exponent.
For a negative Lyapunov exponent we establish synchronization, meaning convergence of orbits with different initial points. For a vanishing Lyapunov exponent we establish intermittency, where orbits are close for a set of iterates of full density, but are intermittently apart. For a positive Lyapunov exponent we show the existence of an absolutely continuous stationary measure for the two-point dynamics and discuss its consequences.

For nonnegative Lyapunov exponent and pairs $(M,N)$ that are multiplicatively dependent integers, we provide explicit expressions for absolutely continuous stationary measures of the two-point motions. These stationary measures are infinite $\sigma$-finite measures in the case of zero Lyapunov exponent.
For varying Lyapunov exponent we find here a phase transition for the system of two-point motions, in which the support of the stationary measure explodes with intermittent dynamics and an infinite stationary measure at the transition point. 
\end{abstract}
	
\subjclass[2020]{Primary: 37H20, 37H15, 37A05, 37A25}
\keywords{Synchronization, intermittency, two-point motion, random dynamics, invariant measures}
	
	\maketitle

	\section{Introduction}

In this article we introduce a natural and simple toy model of iterated function systems on the interval with affine expanding and contracting maps and explore its dynamics. We focus in particular on the dynamics of two orbits simultaneously, the so-called two-point motions. Our set-up is as follows. Given a pair $(M,N)$ of integers $M,N \ge 2$, let
\[ f_0: [0,1)\to [0,1); \, x \mapsto N x \pmod 1\]
be the $N$-adic map and let
\[ f_i: [0,1)\to [0,1) ; \, x \mapsto  (x  + i - 1)/M, \qquad 1 \le i \le M,\]
be $M$ contracting maps. Figure~\ref{f:graph32} depicts the graphs for a few values of $(M,N)$.

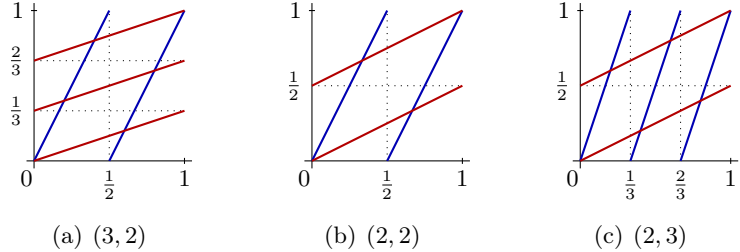
\begin{figure}[!ht]
\begin{center}
\subfigure[$(3,2)$]{
\begin{tikzpicture}[scale=2]
\draw(-.05,0)node[below]{\footnotesize $0$}--(.5,0)node[below]{\footnotesize $\frac12$}--(1,0)node[below]{\footnotesize $1$}--(1.05,0);
\draw(0,-.05)--(0,1/3)node[left]{\footnotesize $\frac13$}--(0,2/3)node[left]{\footnotesize $\frac23$}--(0,1)node[left]{\footnotesize 1}--(0,1.05);
\draw(1,-.02)--(1,.02)(-.02,1)--(.02,1);
\draw[dotted](.5,0)--(.5,1)(0,1/3)--(1,1/3)(0,2/3)--(1,2/3);
\draw[thick, blue!70!black] (0,0)--(.5,1)(.5,0)--(1,1);
\draw[thick, red!70!black] (0,0)--(1,1/3)(0,1/3)--(1,2/3)(0,2/3)--(1,1);
\end{tikzpicture}
}
\hspace{.5cm}
\subfigure[$(2,2)$]{
\begin{tikzpicture}[scale=2]
\draw(-.05,0)node[below]{\footnotesize $0$}--(.5,0)node[below]{\footnotesize $\frac12$}--(1,0)node[below]{\footnotesize $1$}--(1.05,0);
\draw(0,-.05)--(0,1/2)node[left]{\footnotesize $\frac12$}--(0,1)node[left]{\footnotesize 1}--(0,1.05);
\draw(1,-.02)--(1,.02)(-.02,1)--(.02,1);
\draw[dotted](.5,0)--(.5,1)(0,1/2)--(1,1/2);
\draw[thick, blue!70!black] (0,0)--(.5,1)(.5,0)--(1,1);
\draw[thick, red!70!black] (0,0)--(1,1/2)(0,1/2)--(1,1);
\end{tikzpicture}}
\hspace{.5cm}
\subfigure[$(2,3)$]{
\begin{tikzpicture}[scale=2]
\draw(-.05,0)node[below]{\footnotesize $0$}--(1/3,0)node[below]{\footnotesize $\frac13$}--(2/3,0)node[below]{\footnotesize $\frac23$}--(1,0)node[below]{\footnotesize $1$}--(1.05,0);
\draw(0,-.05)--(0,1/2)node[left]{\footnotesize $\frac12$}--(0,1)node[left]{\footnotesize 1}--(0,1.05);
\draw(1,-.02)--(1,.02)(-.02,1)--(.02,1);
\draw[dotted](1/3,0)--(1/3,1)(2/3,0)--(2/3,1)(0,1/2)--(1,1/2);
\draw[thick, blue!70!black] (0,0)--(1/3,1)(1/3,0)--(2/3,1)(2/3,0)--(1,1);
\draw[thick, red!70!black] (0,0)--(1,1/2)(0,1/2)--(1,1);
\end{tikzpicture}}
\caption{Graphs of $f_0,\ldots,f_M$ for $(M,N) = (3,2), (2,2), (2,3)$.}
\label{f:graph32}
\end{center}	
\end{figure}

For a sequence $\omega = (\omega_0, \omega_1, \ldots ) \in \{0,1, \ldots, M \}^\mathbb N$, write
\begin{equation}\label{q:iterates}
f_\omega^n = f_{\omega_{n-1}} \circ \cdots \circ f_{\omega_1} \circ f_{\omega_0},
\end{equation}
for $n$ compositions of maps from $\{ f_0, f_1, \ldots, f_M\}$ with $f_\omega^0$ equal to the identity mapping. We consider orbits $(f_\omega^n(x))_{n \ge 1}$ for points $x \in [0,1)$, where the $\omega_i \in \{0,1, \ldots, M \}$ are picked independently and identically distributed with probabilities $p_i$. Throughout the article we make the following assumption on the probability vector $\mathbf p = (p_0,\ldots, p_M)$: Choose the map $f_0$ with probability $0 < p_0 < 1$ and all maps $f_i$, $1 \le i \le M$, with equal probability $ p_i = \frac{1-p_0}{M}$. So the randomness depends on a single parameter $p_0 \in (0,1)$ and the probability vector $\mathbf p$ is of the special form
\begin{equation}\label{q:vectorp}
\mathbf p = \left(p_0 , \frac{1-p_0}{M} , \ldots, \frac{1-p_0}{M} \right).
\end{equation}
Let $\nu$ denote the $\mathbf p$-Bernoulli measure on $\{0,1, \ldots, M \}^\mathbb N$. Let $\lambda$ denote the Lebesgue measure.

\vskip .2cm
We are interested in results on the two-point motions $(f^n_\omega(x),f^n_\omega(y))_{n \ge 0}$ for $x,y \in [0,1)$ and $\omega \in \{0,1, \ldots, M \}^\mathbb N $. Statistical properties of such two-point motions are obtained by studying the iterated function system on $[0, 1)^2$ generated by the maps
\begin{equation}\label{q:2ptifs}
f^{(2)}_i (x,y) = ( f_i (x), f_i(y)), \qquad  0\le i \le M.
\end{equation}
We note that two-point motions in contexts of stochastic differential equations are considered in work by Baxendale, see in particular \cite{MR1144097,MR968817,MR1678447}. For compositions of independent random diffeomorphisms it is investigated in \cite{MR968818}.

\vskip .2cm
Here we consider two types of results. Firstly, we investigate the asymptotics of the distances $|f^n_\omega(x)-f^n_\omega(y)|$ when $n \to \infty$. Below we show that, with the probability vector $\mathbf p$ from \eqref{q:vectorp}, the Lebesgue measure is a stationary measure for the iterated function system $\{ f_i \, ; \, 0 \le i \le M\}$ on $[0,1)$. (We note that Lebesgue measure is not always the unique stationary measure, examples of non-uniqueness can be deduced from \cite{MR2150697}.) In this sense we treat conservative systems and one expects points from typical orbits to lie uniformly distributed in the unit interval. However, we will see that different values of $M$ and $N$, or different values of $p_0$, lead to significant differences in the behavior of two orbits with different initial conditions under the same composition of maps. We distinguish three different types of dynamical behavior, the occurrence of which hinges on the sign of the Lyapunov exponent
\begin{align}\label{e:Lp0}
 L_{p_0} &= \lim_{n\to\infty} \frac{1}{n} \sum_{i=0}^{n-1}  \ln f_{\omega_i}' =   p_0 \ln (N) - (1-p_0) \ln (M).
\end{align}
This limit exists almost surely and equals the given constant by the strong law of large numbers. The following theorem assembles our main results on the asymptotics of $|f^n_\omega(x)-f^n_\omega(y)|$. 
 
\begin{theorem}\label{t:main}
Let $M,N \ge 2$ be integers and $0 < p_0 < 1$ be given. For the iterated function system $\{ f_0, f_1, \ldots, f_M\}$ and probability vector $\mathbf p$ as in \eqref{q:vectorp}, we have the following.
\begin{itemize}
	
\item[(i)] Suppose $L_{p_0} < 0$. Then 
\[ \lim_{n \to \infty} |f^n_\omega (x) - f^n_\omega(y)|=0\]
for all $x,y \in [0,1)$ and $\nu$-almost all $\omega$.

\item[(ii)] Suppose $L_{p_0}=0$. Then for every $\varepsilon >0$ we have
\[ \lim_{n \to \infty} \frac1n |\{ 0 \le i < n \, ; \, |f_\omega^i(x)-f_\omega^i(y)| < \varepsilon \} |=1\]
for all $x,y \in [0,1)$ and $\nu$-almost all $\omega$, while for any small $\beta >0$, any $x,y \in [0,1)$ and $\nu$-almost all $\omega$ either $|f^n_\omega (x) - f^n_\omega(y)|=0$ for some $n$ or $|f^n_\omega (x) - f^n_\omega(y)|> \beta$ for infinitely many values of $n$.

\item[(iii)] Suppose $L_{p_0} >0$. 
Then \[P(\varepsilon) =  \lim_{n\to \infty} \frac1n \left|\{ 0 \le i < n \, ; \, |f_{\omega}^i(x)-f_\omega^i(y)| < \varepsilon \}\right|\] exists for $\nu\times \lambda$-almost all $(\omega,x,y)$,
and 
\[
\lim_{\varepsilon \to 0} P(\varepsilon)  = 0.
\]
\end{itemize}
\end{theorem}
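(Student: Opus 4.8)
My plan is to reduce all three parts to the scalar process $D_n:=|f^n_\omega(x)-f^n_\omega(y)|$, using that $\ln D_n$ is, away from a set of exceptional steps, a random walk. If $\omega_n\neq0$ then $f_{\omega_n}$ contracts exactly by $M$, so $D_{n+1}=D_n/M$; if $\omega_n=0$ and the interval spanned by $f^n_\omega(x),f^n_\omega(y)$ contains no discontinuity $k/N$ of $f_0$, then $f_0$ is affine there and $D_{n+1}=ND_n$; in the remaining case --- a \emph{straddle} --- $f_0$ folds the two images back into $[0,1)$, so $D_{n+1}\in(0,1)$ and a straddle never raises the separation above $1$. Hence, with $\xi_i:=\ln N\cdot\mathbf 1_{\{\omega_i=0\}}-\ln M\cdot\mathbf 1_{\{\omega_i\neq0\}}$ (i.i.d.\ of mean $L_{p_0}$) and $W_n:=\sum_{i<n}\xi_i$, one has $\ln D_n=\ln|x-y|+W_n$ for as long as no straddle has occurred; a straddle at small $D_n$ throws $D_{n+1}$ back near $1$, whereas straddles at $D_n$ of order $1$ (which are typical there) act as a soft reflection of $\ln D_n$ at the upper boundary $0$. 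So $\ln D_n$ should behave like a random walk of drift $L_{p_0}$ reflected at the top, and (i), (ii), (iii) correspond to its transient, null-recurrent and positive-recurrent regimes.

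For (i), since $L_{p_0}<0$ the function $\theta\mapsto p_0N^\theta+(1-p_0)M^{-\theta}$ is $<1$ for small $\theta>0$, so $(e^{\theta W_n})_n$ is a supermartingale and $\mathbb P(\sup_nW_n\geq A)\leq e^{-\theta A}$, while $W_n\to-\infty$ a.s.\ by the strong law of large numbers. The engine is then the claim: starting from a separation $\delta$, with probability $\geq 1-c(\delta)$ (with $c(\delta)\to0$ as $\delta\to0$) no straddle ever occurs and $\ln D_n=\ln\delta+W_n\to-\infty$. Establishing this uses crucially the special form of $\mathbf p$ in \eqref{q:vectorp}: conditionally on $\omega_i\neq0$ the map $f_{\omega_i}$ is uniform among the $M$ contractions, so after a block of $\ell$ consecutive contractions the point $f^n_\omega(x)$ is equidistributed over $M^\ell$ equally spaced values, whence the conditional chance that a subsequent $f_0$ straddles some $k/N$ is at most $(N-1)(D_n+M^{-\ell})$; combining this with the geometric decay of $D_n$ along no-straddle runs controls all straddles by Borel--Cantelli. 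To get the statement for all $x,y$ and $\nu$-a.e.\ $\omega$, I would use that $p_0<1$ forces blocks of $\ell\to\infty$ consecutive contractions infinitely often, after which every pairwise separation is at most $M^{-\ell}$, and then apply the previous estimate from those times. The hard part will be this bookkeeping of straddles: a straddle while the separation is already small would undo the convergence, and it is precisely to rule it out with a summable probability that one needs the equidistribution coming from \eqref{q:vectorp} --- and the argument has to handle every pair $(x,y)$, not a single one.

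For (ii), $L_{p_0}=0$ makes $W_n$ recurrent with $W_n=o(n)$; being null-recurrent, the reflected process $\ln D_n$ spends only a density-zero set of times within bounded distance of $0$, which is the assertion $\frac1n|\{0\leq i<n:D_i<\varepsilon\}|\to1$. (When $M,N$ are multiplicatively dependent this also follows cleanly from the explicit $\sigma$-finite stationary measure $\mu$ of the two-point motion constructed later in the paper: $\mu$ is infinite because of a logarithmic divergence along the diagonal but finite on $\{|x-y|\geq\varepsilon\}$, so Aaronson's theorem for conservative ergodic infinite measure preserving systems gives that the Birkhoff averages of $\mathbf 1_{\{|x-y|\geq\varepsilon\}}$ vanish.) For the dichotomy, recurrence of $W_n$ brings $\ln D_n$ near $0$, hence $D_n$ near $1$, infinitely often; the only way the pair can leave $\{|x-y|>\beta\}$ permanently is to reach a separation lying in $\frac1N\mathbb Z$ and then apply $f_0$, which makes $f^n_\omega(x)=f^n_\omega(y)$ exactly --- exactly the other alternative in the statement.

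For (iii), $L_{p_0}>0$ puts the reflected walk in the positive recurrent regime, matching the absolutely continuous stationary probability measure $\mu$ of the two-point system $\{f^{(2)}_i\}$ from \eqref{q:2ptifs} constructed in the paper. Existence of $P(\varepsilon)$ for $\nu\times\lambda$-a.e.\ $(\omega,x,y)$ is the Birkhoff ergodic theorem for the two-point Markov chain, together with absolute continuity of $\mu$ to place $\lambda\times\lambda$-a.e.\ starting point in the relevant basin. Taking expectations against $\lambda\times\lambda$ and using that the Cesàro averages of the $n$-step distributions of $(f^n_\omega(x),f^n_\omega(y))$ converge to $\mu$ gives $\mathbb E[P(\varepsilon)]=\mu(\{|x-y|<\varepsilon\})$; since $\varepsilon\mapsto P(\varepsilon)$ is nondecreasing and bounded, monotone convergence yields $\mathbb E[\lim_{\varepsilon\to0}P(\varepsilon)]=\mu(\{x=y\})=0$, the last equality because $\mu\ll\lambda\times\lambda$ and the diagonal is Lebesgue-null; therefore $\lim_{\varepsilon\to0}P(\varepsilon)=0$ a.e.
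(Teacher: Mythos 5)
Your reduction to the ``random walk with reflection at the top'' $W_n=\sum_{i<n}\xi_i$ and the identification of (i),(ii),(iii) with its transient, null-recurrent, and positive-recurrent regimes is exactly the right picture, and it matches the perspective of the paper. However, the argument as written has genuine gaps in parts (i) and (ii), precisely at the points you yourself flag as ``the hard part''.

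For (i): your ``engine'' claim --- starting from separation $\delta$, no straddle ever occurs with probability $\geq 1-c(\delta)$ --- is the whole content of the theorem, and the equidistribution-plus-Borel--Cantelli sketch does not yet deliver it. You would need to show that the sum of conditional straddle probabilities $\sum_n (D_n+M^{-\ell(n)})$ is small with high probability, which requires controlling $\sum_n e^{W_n}$ uniformly on a set of large measure and handling the conditioning carefully; none of that is carried out. The paper sidesteps this entirely with a much cleaner device: since $\mu=\nu\times\lambda$ is $F$-invariant (Proposition~\ref{p:inv}), the Borel--Cantelli lemma applied to the shrinking neighborhoods $B(1/n^2)$ of the discontinuity set $\mathcal C$ gives that for $\mu$-a.e.\ $(\omega,x)$ the orbit visits $B(1/n^2)$ only finitely often, which, combined with $(f^n_\omega)'\le C\zeta^n$, implies $f^n_\omega(J_\omega)$ stays an interval of shrinking length. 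The passage from $\mu$-a.e.\ to ``all $x,y$, $\nu$-a.e.\ $\omega$'' is then a separate two-step trick (prepending a contracting word to cover $[0,1)$, then a Lebesgue density point argument). Your fallback idea of using long contraction blocks to get a uniformly small image interval is the same trick, but it still rests on the unproven engine claim.

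For (ii): the assertion ``null recurrence implies density-zero time near the boundary'' is where the technical content lives, and it cannot be read off directly because $\ln D_n$ is not a reflected random walk --- straddles fold the interval and can happen at any level, not just near zero. The paper has to construct time-dependent stopping levels $K_n=2\ln(n+p)+\ln\varepsilon$, replace the stopping criterion by one involving $B(1/(p+n)^2)$, prove Lemma~\ref{l:stoppingtimeinfinite} on stopping times with these slowly growing levels (which itself uses Wald's identity and an optional-stopping martingale bound), and invoke the equidistribution Lemma~\ref{l:feta} for the multivalued maps to control the straddle frequency conditional on the $\{0,1\}$-pattern. None of this is present in your sketch; the alternative route via Aaronson's theorem applies only when $M,N$ are multiplicatively dependent and only gives the result for a.e.\ $(x,y)$ rather than all $(x,y)$, which is strictly weaker than what is claimed.

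For (iii) your argument is essentially right and is in the same spirit as the paper, which relies on the existence of the absolutely continuous stationary probability measure $m^{(2)}$ (Theorem~\ref{t:stat2}) together with cited results from the literature for the existence of the limit $P(\varepsilon)$. Your observation that the dichotomy in (ii) forces an exact merger when $D_n\in\frac1N\mathbb Z$ is also correct and matches the mechanism in Theorem~\ref{t:Lp=0frequency2}.

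In summary: the global strategy is sound, but (i) and (ii) are not proved as stated --- the straddle Borel--Cantelli and the stopping-time estimates with time-dependent levels are the real work, and they are precisely what is missing.
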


This theorem combines statements of Theorem~\ref{t:M>Nconvergence}, Theorem~\ref{t:Lp=0frequency}, Theorem~\ref{t:Lp=0frequency2} and Theorem~\ref{t:divergence} below.

\vskip .2cm
To put the results in a broader context, we comment on the phenomena of synchronization, intermittency and instability, observed in the three different cases.

\begin{description}[leftmargin=!]
	
\item[\underline{$L_{p_0} < 0$}]\; In this case the contraction wins from the expansion and synchronization occurs, which means that orbits from different initial points in $[0,1)$ converge to each other almost surely. This is comparable to synchronization by noise \cite{Pik84,zbMATH03864265}.  Related synchronization results have been obtained in diverse settings, see e.g. \cite{MR1144097,MR3663624,MR3862799,MR3820004}. Kleptsyn and Volk \cite{MR3236497} treat such a phenomenon in the context of smooth monotone interval maps forced by transitive subshifts of finite type. Closely related is \cite{MR3600645} that provides cases of synchronization for iterated function systems generated by interval diffeomorphisms. Synchronization by noise in random logistic maps is considered in \cite{AD00,MR1908881,MR3826118}. \\
	
\item[\underline{$L_{p_0} = 0$}]\; In this neutral case a phenomenon reminiscent of intermittency arises. 
Intermittency, first studied in \cite{PM80}, refers to the phenomenon where a dynamical system shows sudden transitions from a long period of exhibiting one type of dynamical behavior to a  period of another type of dynamics. 
Recently this was analyzed in the context of random dynamics for the random Gauss-R\'enyi map \cite{KKV17,BRS20,Tay21,KMTV,afgv}, random LSV maps \cite{BBD14,BB16}, random logistic maps \cite{MR2033186,MR3826118} and more general families in \cite{homburg2021critical,KZ}.

In the setting of  Theorem~\ref{t:main}, orbits of different initial points are intermittently very close together or some distance apart. The set of iterates for which orbits are close has full density, but the complement is still an infinite set. 
A similar mechanism arises in iterated function systems of interval diffeomorphisms \cite{MR3600645}, or more generally for skew product systems with interval diffeomorphisms as fiber maps \cite{MR4108906}. In both papers one gets a singular distribution of orbit points instead of the uniform distribution that we find.  	\\

\item[\underline{$L_{p_0} > 0$}]\; Here the expansion wins from the contraction 
and orbits tend to diverge from each other. Random interval maps with a condition on average expansion have been studied
extensively, see e.g. \cite{MR807105,MR1707698,MR722776,KALLE_2020,MR3350045,MR3816673}. Following the definition from \cite{MR722776} the expanding on average condition would correspond to $\frac{p_0}{N} + (1-p_0)M < 1$, which does not align with the condition that $L_{p_0}>0$. Hence we do not rely on these expansion on average results here.
The two-point maps are connected to Jablonski maps \cite{MR4287984}, and in the positive Lyapunov exponent case to  
research on invariant measures for random Jablonski maps \cite{MR1345800,MR4342141,MR2110096,Hsi08}.

\end{description}

\vskip .2cm
Our second set of main results concerns invariant measures of the iterated function system from \eqref{q:2ptifs}, again with the probability vector $\mathbf p$ from \eqref{q:vectorp}, under an additional assumption on the expansion and contraction factors $M$ and $N$. Two integers $M,N \ge 1$ are called {\em multiplicatively dependent} if they are powers of the same natural number, i.e., if 
$M = \kappa^\ell$ and $N = \kappa^k$ for some integers $\kappa>1$ and $k, \ell \ge 1$. Here we always take $k,\ell$ to be relatively prime. This condition is equivalent to $\ln(N)/\ln(M) = k/\ell \in \mathbb Q$. If $M,N$ are not multiplicatively dependent, they are called multiplicatively independent. Some of the difficulties in the analysis for Theorem~\ref{t:main} are caused by the points of discontinuity of $f_0$ and can be circumvented in case $M,N$ are multiplicatively dependent. This leads to the following theorem.

\vskip .2cm
Write $\Delta = \{ (x,x) \; ; \; x \in [0,1)\}$ for the diagonal in $[0,1)^2$ and write $\Delta_\varepsilon = \{ (x,y) \in [0,1)^2 \; ; \; |y-x|<\varepsilon \}$ for the $\varepsilon$-neighborhood of $\Delta$. We let $a (\varepsilon) \sim b(\varepsilon)$ stand for $a(\varepsilon)/b(\varepsilon)$ bounded and bounded away from zero as $\varepsilon \to 0$.
 
\begin{theorem}\label{t:main2}
	Let $M,N \ge 2$ be integers and $0 < p_0 < 1$ be given. 
	Assume that $M$ and $N$ are multiplicatively dependent with $N = \kappa^k$ and $M = \kappa^\ell$.
	For the iterated function system $\{ f_0^{(2)}, f_1^{(2)}, \ldots, f_M^{(2)}\}$ and probability vector $\mathbf p$ as in \eqref{q:vectorp}, we have the following.
	\begin{itemize}
		
		\item[(i)] Suppose $L_{p_0} < 0$. 
			Then the iterated function system of two-point maps admits Lebesgue measure on $\Delta$ as stationary measure. 
		
		\item[(ii)] Suppose $L_{p_0}=0$. 
		Then the iterated function system of two-point maps admits Lebesgue measure on $\Delta$ as stationary measure. 
		Furthermore, it admits an infinite $\sigma$-finite
		absolutely continuous stationary measure of full topological support.
		
		\item[(iii)] Suppose $L_{p_0} >0$. 
		Then the iterated function system of two-point maps admits Lebesgue measure on $\Delta$ as stationary measure. Furthermore, it admits an  
		absolutely continuous stationary probability measure $\mu^{(2)}$ of full topological support and with
		\[\mu^{(2)} (\Delta_\varepsilon)  \sim \varepsilon^{-\ln( \nu_1) / \ln (\kappa)}, \]
		where $\nu_1$ is the unique real solution in $(0,1)$ to $p_0 z^{k+\ell} - z^\ell + 1 - p_0 = 0$.	
		The density of $\mu^{(2)}$ is bounded precisely if $\nu_1 \kappa < 1$.

	\end{itemize}
\end{theorem}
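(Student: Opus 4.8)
First, the assertion common to all three cases — that Lebesgue measure on $\Delta$ is stationary — is immediate. The diagonal is forward invariant under every $f_i^{(2)}$, and the bijection $(x,x)\mapsto x$ conjugates $\{f_i^{(2)}|_\Delta : 0\le i\le M\}$ to the one‑point system $\{f_i\}$ on $[0,1)$, for which Lebesgue measure is stationary; pushing Lebesgue measure forward through this conjugacy gives a stationary measure for $\{f_0^{(2)},\dots,f_M^{(2)}\}$ carried by $\Delta$. So the content of the theorem is the construction of the absolutely continuous stationary measures away from $\Delta$, together with the behaviour near $\Delta$ in case (iii).

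The plan is to work in base $\kappa$. Since $N=\kappa^k$ and $M=\kappa^\ell$, the map $f_0$ is the shift deleting the first $k$ base‑$\kappa$ digits, and each $f_i$, $1\le i\le M$, prepends the length‑$\ell$ block that is the base‑$\kappa$ expansion of $i-1$; because all $M=\kappa^\ell$ of them carry equal probability $\tfrac{1-p_0}{M}$, one step of the IFS either deletes $k$ digits (probability $p_0$) or prepends $\ell$ uniformly random digits (probability $1-p_0$), and the two‑point IFS does this to both coordinates at once. Under a contracting step $|y-x|$ is divided by $\kappa^\ell$; under $f_0$ it is multiplied by $\kappa^k$ whenever $\kappa^k x$ and $\kappa^k y$ lie in a common unit interval, which fails only for $x$ in a set of measure $O(\kappa^{k}|y-x|)$. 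Hence, away from the immediate vicinity of $\Delta$, $\log_\kappa|y-x|$ performs a random walk with increments $+k$ (probability $p_0$) and $-\ell$ (probability $1-p_0$) on $(-\infty,0]$, reflected at $0$, whose drift $p_0k-(1-p_0)\ell$ has the sign of $L_{p_0}$, and the reflection and exceptional $f_0$‑jumps become negligibly rare as $|y-x|\to0$.

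For the algebraically exact part one uses instead the partition of $[0,1)^2\setminus\Delta$ into the sets $A_m=\{(x,y):\text{the first base-}\kappa\text{ digit at which }x,y\text{ disagree is }m\}$, with $\lambda^2(A_m)=(\kappa-1)\kappa^{-m}$ (here $\lambda^2$ is planar Lebesgue measure), which are \emph{exactly} permuted by the dynamics: a contracting step sends $A_m$ onto a piece of $A_{m+\ell}$, while $f_0$ sends $A_m$ onto a piece of $A_{m-k}$ for $m>k$ and, for $m\le k$, maps $A_m$ onto $[0,1)^2$ in a way whose conditional law on the $\{A_j\}$ equals the Lebesgue law $j\mapsto(\kappa-1)\kappa^{-j}$. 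Consequently the two‑point transfer operator $\mathcal L$ preserves the cone of densities $h=\sum_{m\ge1}c_m\mathbf 1_{A_m}$, and $\mathcal L h=h$ becomes a linear recursion for $(c_m)$ whose characteristic equation, after the substitution $z=\kappa w$, is $p_0w^{k+\ell}-w^\ell+1-p_0=0$. If $L_{p_0}<0$ this recursion has no nontrivial nonnegative $\kappa^{-m}$‑summable solution, consistent with there being no absolutely continuous stationary measure off $\Delta$. If $L_{p_0}=0$ the polynomial has $w=1$ (equivalently $z=\kappa$) as a double root, so one obtains a nonnegative solution with $c_m\asymp\kappa^m$; the density $\sum_m c_m\mathbf 1_{A_m}$ is everywhere positive, hence of full topological support, but $\sum_m c_m\lambda^2(A_m)=\infty$, which is the asserted infinite $\sigma$‑finite absolutely continuous stationary measure. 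If $L_{p_0}>0$ the same recursion produces an absolutely continuous stationary probability measure of full support.

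Finally, the fine asymptotics $\mu^{(2)}(\Delta_\varepsilon)\sim\varepsilon^{-\ln\nu_1/\ln\kappa}$ is controlled by $\log_\kappa|y-x|$ rather than by the coarse disagreement index $m$: near $\Delta$ this is the positively‑drifting reflected random walk of the second paragraph, and the stationary law of such a walk has an exponential tail with rate $-\ln\nu_1$, where $\nu_1\in(0,1)$ is the Cram\'er root, i.e.\ the solution in $(0,1)$ of $p_0z^{k}+(1-p_0)z^{-\ell}=1$, which is the same equation as $p_0z^{k+\ell}-z^\ell+1-p_0=0$. Thus the probability that $|y-x|<\kappa^{-m}$ is comparable to $\nu_1^{m}$; taking $\varepsilon=\kappa^{-m}$ and interpolating over $\kappa^{-m}\le\varepsilon<\kappa^{-(m-1)}$ gives the stated estimate, and correspondingly the density behaves like $|y-x|^{-\ln\nu_1/\ln\kappa-1}$ near $\Delta$, bounded precisely when $-\ln\nu_1/\ln\kappa\ge1$, i.e.\ when $\nu_1\kappa\le1$, with the borderline $\nu_1\kappa=1$ unbounded because there the Cram\'er root coincides with the constant forcing in the recursion and a resonant logarithmic factor appears. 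I expect the main obstacle to be the discontinuities of $f_0$: controlling the exceptional wraparound set precisely enough that it does not fatten the exponential tail, which is exactly where multiplicative dependence of $M$ and $N$ is essential to keep the comparison of $\kappa^k x$ and $\kappa^k y$ clean. Two further delicate points are the null‑recurrence argument needed in case (ii) to conclude that the $\sigma$‑finite stationary measure is genuinely infinite, and, in case (iii), the identification of the stationary measure governing the near‑$\Delta$ asymptotics — whose density need not be constant on the sets $A_m$ — as the one built from the reflected walk.
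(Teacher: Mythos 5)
Your proposal follows essentially the paper's route: the stationary measure for the two-point system is sought among absolutely continuous measures whose density respects the natural base-$\kappa$ structure near the diagonal, the push-forward operator restricts to a linear recursion with characteristic polynomial $p_0 w^{k+\ell}-w^\ell+1-p_0$, and the sign of $L_{p_0}$ translates into positive recurrence, null recurrence, or the synchronized regime of the associated reflected random walk on $\mathbb{N}$. Your parametrization by the first-disagreement sets $A_m$ is just a change of variable from the paper's parametrization by the nested unions $\bigcup_j[j/\kappa^h,(j+1)/\kappa^h)^2$ (with $c_m=\sum_{h<m}b_h\kappa^h$), and the two places you flag as delicate --- null recurrence giving nonnegativity and infiniteness of the $\sigma$-finite measure in case (ii), and passing from the reflected-walk tail to the actual $\mu^{(2)}(\Delta_\varepsilon)$ asymptotics in case (iii) --- are precisely what the paper supplies via the cited reference in Theorem~\ref{t:2point}, the eigenvalue analysis in Theorem~\ref{t:stationary-multdep}, and the rescaling argument in Theorem~\ref{t:divergence}.
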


The measure  $\mu^{(2)} (\Delta_\varepsilon)$ from Theorem~\ref{t:main2}(iii) quantifies the proportion of iterates that typical orbits $f^n_\omega (x)$ and $f^n_\omega (y)$ are close. This theorem combines statements of Corollary~\ref{c:diagonal}, Theorem~\ref{t:M>Nconvergence}, Theorem~\ref{t:2point}, Theorem~\ref{t:stationary-multdep} and Remark~\ref{r:boundeddensity} below.
Further results of a similar flavor, in particular with explicit expressions for stationary measures, or for stationary measures in case of multiplicatively independent pairs $(M,N)$, are found in Section~\ref{s:2point}.

\vskip .2cm
We stress that the original iterated function system $\{ f_i \, ; \, 0 \le i \le M\}$ behaves independently of $p_0$ in the sense that Lebesgue measure on the interval $[0,1)$ is stationary for all values of $p_0$. The above theorem however makes clear that, depending on the parameters, the corresponding two-point motions show a range of different behaviors. In particular the theorem describes a bifurcation or phase transition
in the iterated function system of two-point motions as 
the Lyapunov exponent crosses zero for varying $p_0$,  compare also \cite{MR1678447,dacosta2021stochastic}.
 This phase transition involves a discontinuous change
 of the support of the stationary measure of the two-point motion
 (an explosion of its support) and an infinite stationary measure at the bifurcation point.   
 Bifurcations that involve a Lyapunov exponent crossing zero are also considered in studies of noise-induced order such as \cite{MR711470,MR4127778} and in settings with skew product systems such as \cite{TZ22}.

\vskip .2cm
The article is outlined as follows. In the next section we introduce preliminaries on random dynamics, we prove that Lebesgue measure is stationary for the iterated function systems $\{ f_0, f_1, \ldots, f_M\}$ and we introduce several extensions of these  systems that are useful in later parts of the text. In particular we explain a connection to a class of generalized Baker maps in three dimensions, a particular example of which has recently been studied in \cite{MR4287984} in the context of heterogeneous chaos. In Section~\ref{s:2point} we study the iterated function system $\{ f_0^{(2)}, f_1^{(2)}, \ldots, f_M^{(2)}\}$ and derive our main results. The section is divided into three parts depending on the sign of $L_{p_0}$. All parts come with their own techniques. The case of a vanishing Lyapunov exponent uses theory of random walks involving stopping times with time dependent stopping criteria. This material is developed in Appendix~\ref{s:A}. We end the article with a short description of possible future extensions of this research.
	
	\hspace{0.5cm}\\
	\noindent {\bf Acknowledgements.}
	The idea for this paper started with a project for a bachelor thesis of Pjotr Thibaudier. Discussions with him were quite helpful.
	
\section{Skew product systems}

\subsection{Lebesgue measure is stationary}	
As usual an approach using a skew product system aids to describe the iterated function system as a single dynamical system, and to use the machinery of dynamical systems theory and ergodic theory. Write $\Sigma=  \{0,\ldots,M\}^\mathbb{N}$ for the space of one-sided infinite sequences of symbols in $\{0,  \ldots, M\}$, endowed with the product topology obtained from the discrete topology on $\{0,\ldots,M\}$. Elements $\omega \in \Sigma$ will be written as  
$\omega = (\omega_i)_{i\in\mathbb{N}}$. Let $\sigma: \Sigma \to \Sigma$ be the {\em left shift operator} defined by
\[(\sigma \omega)_i = \omega_{i+1}, \quad i \ge 0. \]
Write $[a_0\cdots a_k]$ for the {\em cylinder} 
\[ [a_0\cdots a_k] = \{ \omega \in \Sigma \, ; \, \omega_j  = a_j, \, 0\le j \le k\}.\]
We equip $\Sigma$ with the Borel $\sigma$-algebra. Given any $0 < p_0 < 1$ and the corresponding positive probability vector $\mathbf p$ as specified in \eqref{q:vectorp}, we let $\nu = \nu_{\mathbf p}$ denote the Bernoulli measure on $\Sigma$ that is defined on the cylinder sets by 
\[ \nu ([a_0\cdots a_k]) = \prod_{j=0}^k  p_{a_j}.\]
The measure $\nu$ is an ergodic invariant measure for the shift map $\sigma$.

\vskip .2cm
Define the skew product system $F: \Sigma \times [0,1) \to \Sigma \times [0,1)$ by
\[ F(\omega,x) = (\sigma \omega , f_{\omega_0} (x)).\]
We use the notation $F^k (\omega,x) = (\sigma^k \omega , f^k_\omega (x))$ for iterates, where $f^k_\omega$ is as defined in \eqref{q:iterates}. We also write $f^k_\eta (x)$ for elements $\eta = \eta_0 \cdots \eta_{m-1} \in \{0,\ldots,M\}^m$, called {\em words}, with $k \le m$. With slight abuse of notation we will use $\lambda$ to denote the one-, two- and three-dimensional Lebesgue measure, since the meaning will be clear from the context.
		
\begin{proposition}\label{p:inv}
Let $0 < p_0 < 1$. Then the corresponding product measure $\mu:= \nu \times \lambda$ on $\Sigma \times [0,1)$ is an invariant probability measure for $F$.
\end{proposition}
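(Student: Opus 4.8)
The plan is to reduce invariance of $\mu=\nu\times\lambda$ under $F$ to the \emph{stationarity of Lebesgue measure} for the one-dimensional system $\{f_0,\dots,f_M\}$, i.e.\ to the identity
\[
\sum_{i=0}^M p_i\,\lambda\bigl(f_i^{-1}(A)\bigr)=\lambda(A)\qquad\text{for all Borel }A\subseteq[0,1).
\]
First I would establish this identity by a direct computation on each branch. The map $f_0$ is the $N$-adic map: it has $N$ full branches of slope $N$, each defined on an interval of length $1/N$, so $f_0^{-1}(A)$ is a disjoint union of $N$ affine copies of $A$ scaled by $1/N$, whence $\lambda(f_0^{-1}(A))=\lambda(A)$. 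Each $f_i$ with $1\le i\le M$ is an affine injection of $[0,1)$ onto $\bigl[(i-1)/M,\,i/M\bigr)$ with slope $1/M$, so $\lambda(f_i^{-1}(A))=M\,\lambda\bigl(A\cap[(i-1)/M,i/M)\bigr)$; summing over $i$ gives $\sum_{i=1}^M\lambda(f_i^{-1}(A))=M\lambda(A)$. Substituting $p_i=(1-p_0)/M$ for $1\le i\le M$ into the left-hand side yields $p_0\lambda(A)+(1-p_0)\lambda(A)=\lambda(A)$, as claimed.

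Next I would check invariance of $\mu$ on the $\pi$-system of measurable rectangles $B\times A$, where $B\subseteq\Sigma$ is a cylinder (or indeed any Borel set) and $A\subseteq[0,1)$ is Borel. Partitioning according to the value of the first symbol $\omega_0=i$, one has the disjoint decomposition
\[
F^{-1}(B\times A)=\bigsqcup_{i=0}^M\bigl(\{\omega_0=i\}\cap\sigma^{-1}B\bigr)\times f_i^{-1}(A).
\]
Because $\nu$ is a Bernoulli (product) measure, $\nu\bigl(\{\omega_0=i\}\cap\sigma^{-1}B\bigr)=p_i\,\nu(B)$ (immediate on cylinders, then extended by uniqueness). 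Hence
\[
\mu\bigl(F^{-1}(B\times A)\bigr)=\sum_{i=0}^M p_i\,\nu(B)\,\lambda\bigl(f_i^{-1}(A)\bigr)=\nu(B)\sum_{i=0}^M p_i\,\lambda\bigl(f_i^{-1}(A)\bigr)=\nu(B)\,\lambda(A)=\mu(B\times A),
\]
where the penultimate equality uses the stationarity identity above.

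Finally, the rectangles $B\times A$ form a $\pi$-system generating the product Borel $\sigma$-algebra on $\Sigma\times[0,1)$, and the family of sets $C$ with $\mu(F^{-1}C)=\mu(C)$ is a Dynkin system (it contains the whole space and is closed under complements and countable disjoint unions). The $\pi$--$\lambda$ theorem then upgrades invariance to all Borel sets, and $\mu$ is a probability measure since $\nu$ and $\lambda$ are. I do not expect a genuine obstacle here; the only point requiring a little care is that $f_0$ is not invertible, so the whole argument must be phrased via the preimage identity $\lambda(f_0^{-1}A)=\lambda(A)$ rather than a change-of-variables formula, and one must keep the branch bookkeeping for the $f_i$ straight. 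Everything else is routine measure theory.
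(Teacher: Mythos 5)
Your proof is correct and takes essentially the same approach as the paper: decompose $F^{-1}$ of a product set according to the first symbol, use the Bernoulli structure to factor out $p_i$, and reduce to the branch-by-branch computation $p_0\lambda(f_0^{-1}J)+\frac{1-p_0}{M}\sum_{i=1}^M\lambda(f_i^{-1}J)=\lambda(J)$. You spell out the $\pi$--$\lambda$ extension step and state the one-dimensional stationarity identity as a separate lemma, whereas the paper carries out the same computation directly on cylinder$\times$interval sets and leaves the extension implicit, but there is no substantive difference.
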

		
\begin{proof}
For invariance it suffices to consider product sets $A = [a_0 \cdots a_j] \times J$ of cylinder sets $[a_0 \cdots a_j]$ and intervals $J$. Note that for each $x \in [0,1)$ there are $N$ inverse images in $f_0^{-1}\{x\}$ and there is a unique $1 \le j \le M$ for which an inverse image $y\in [0,1)$ with $f_j(y)=x$ exists. One immediately computes that
\[ \begin{split}
\mu (F^{-1}(A)) =\ &  \mu \left(\bigcup_{i=0}^M [i \, a_0 \cdots a_j] \times f_i^{-1} (J) \right)\\
=\ & \nu([a_0\cdots a_j]) \left[ p_0  \lambda (f_0^{-1} (J)) + \frac{1-p_0}{M}  \sum_{i=1}^M \lambda(f_i^{-1} (J))\right]\\
=\ & \nu([a_0\cdots a_j]) \left[ p_0  N \frac{\lambda (J)}{N} +  \frac{1-p_0}{M} M \lambda(J)\right]\\
=\ & \mu (A). \qedhere
\end{split}\]
\end{proof}
	
Note that the proof of Proposition~\ref{p:inv} uses the specifics of the probability vector $\mathbf p$.

\vskip .2cm
Invariance of $\mu$ for $F$ implies that $\lambda$ is a {\em stationary measure} for the iterated function system $\{ f_i \, ; \, 0 \le i \le M\}$ with probability vector $\mathbf p$ in the sense that
\begin{align*} 
	\lambda &= \sum_{i=0}^M   p_i  (f_i)_* \lambda.
\end{align*}	
Here $(f_i)_*$ stands for the push forward measure $(f_i)_* \lambda (A) = \lambda (f_i^{-1} (A))$. Therefore, a direct consequence of Proposition~\ref{p:inv} above is the following.

\begin{corollary}\label{c:diagonal}
The diagonal $\Delta = \{ (x,x) \, ; \, x \in [0,1) \}$ is an invariant set for the iterated function system $\{ f^{(2)}_i \, ; \, 0 \le i \le M \}$ from \eqref{q:2ptifs} with probability vector $\mathbf p$ and Lebesgue measure restricted to $\Delta = \{ (x,x) \, ; \, x \in [0,1) \}$ is a stationary measure.
\end{corollary}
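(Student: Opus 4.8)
The plan is to derive Corollary~\ref{c:diagonal} directly from Proposition~\ref{p:inv} together with the elementary observation that each $f_i$ maps $[0,1)$ into $[0,1)$, so that $f_i^{(2)}$ preserves the diagonal. First I would record the invariance of $\Delta$: if $(x,x) \in \Delta$ then $f_i^{(2)}(x,x) = (f_i(x), f_i(x)) \in \Delta$ for every $0 \le i \le M$, and hence any composition $f_\omega^{(2),n}$ maps $\Delta$ into itself. Identifying $\Delta$ with $[0,1)$ via $x \mapsto (x,x)$, the restriction of the iterated function system $\{f_i^{(2)}\}$ to $\Delta$ is conjugate to the original system $\{f_i\}$ on $[0,1)$.

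Next I would translate the statement of Proposition~\ref{p:inv} into the language of stationary measures. The proposition says $\mu = \nu \times \lambda$ is $F$-invariant; as noted in the text immediately after its proof, this is equivalent to $\lambda = \sum_{i=0}^M p_i (f_i)_* \lambda$ on $[0,1)$. Pushing this identity forward under the conjugacy $x \mapsto (x,x)$, and writing $\lambda_\Delta$ for Lebesgue measure on $\Delta$ (i.e. the image of $\lambda$ under this embedding), one obtains $\lambda_\Delta = \sum_{i=0}^M p_i (f_i^{(2)})_* \lambda_\Delta$, since $(f_i^{(2)})$ restricted to $\Delta$ is exactly the conjugate of $f_i$. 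This is precisely the assertion that $\lambda_\Delta$ is a stationary measure for $\{f_i^{(2)} \, ; \, 0 \le i \le M\}$ with probability vector $\mathbf p$.

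Since the conjugacy argument is entirely routine, there is no real obstacle here; the content is already contained in Proposition~\ref{p:inv}. The only point requiring a moment's care is the bookkeeping of push-forwards: one must check that for a test set $A \subseteq [0,1)^2$, $(f_i^{(2)})^{-1}(A) \cap \Delta$ corresponds under the identification to $f_i^{-1}(A \cap \Delta)$, which is immediate from $f_i^{(2)}|_\Delta = (f_i, f_i)$. I would present the proof in two or three sentences: invariance of $\Delta$ from $f_i([0,1)) \subseteq [0,1)$, and stationarity of $\lambda_\Delta$ as the image under conjugation of the stationarity of $\lambda$ established in Proposition~\ref{p:inv}.
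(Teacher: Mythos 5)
Your proof is correct and matches the paper's reasoning: the paper labels the corollary a direct consequence of Proposition~\ref{p:inv}, and your unpacking — invariance of $\Delta$ from $f_i^{(2)}(x,x)=(f_i(x),f_i(x))$, plus transporting the stationarity identity $\lambda = \sum_i p_i (f_i)_*\lambda$ through the embedding $x \mapsto (x,x)$ — is exactly the intended argument, just spelled out.
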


Below we will also verify the ergodicity of the measure $\mu$ for the skew product $F$. Instead of writing that $\mu$ is ergodic, we also say that the corresponding stationary measure $\lambda$ is ergodic to mean the same. The proofs of ergodicity provided in the next section are different for the three cases identified in Theorem~\ref{t:main}. They use a map that is isomorphic to $F$ as well as an extension of this map. Later we will also use a multivalued map.  For easy reference we use the remainder of this section to introduce all these different maps.

\subsection{One- two- and three-dimensional piecewise affine maps}\label{s:generalbaker}	
We first conjugate the shift map to an expanding interval map. Write  
\[ r_i =   \sum_{j=0}^{i-1}  p_j, \qquad 0\le i \le M+1.\]	
This gives $0 = r_0 < r_1 < \cdots < r_M < r_{M+1} = 1$. Define the expanding interval map $L:[0,1] \to [0,1]$ by setting 
\begin{equation}\label{e:L}
L(w) =  \begin{cases}
\displaystyle \frac{w}{p_0}, & 0 \le w < p_0,\\
\\
\displaystyle \frac{M(w-r_i)}{1-p_0}, &  r_i \le w <  r_{i+1}, \, 1\le i \le  M.
\end{cases}
\end{equation}
See Figure~\ref{f:graph22half}(a) for an example. Then the map $h: \Sigma \to [0,1]$ given by
\begin{equation}\label{q:isomh}
h(\omega) = \sum_{i=0}^\infty  \prod_{j=0}^{i-1}  p_{\omega_j}  r_{\omega_i}
\end{equation}
satisfies $h \circ \sigma = L \circ h$ and $h_\ast \nu = \lambda$. There is only a countable set of codes in $\Sigma$ on which $h$ is not injective. So, as $h$ is invertible after removing sets of zero measure, it defines a measurable isomorphism. From this we see that the skew product map $F$ is measurably isomorphic to $G: [0,1)^2 \to [0,1)^2$ given by
\begin{equation}\label{e:G}
G(w,x) =  \begin{cases}
\displaystyle \left(\frac{w}{p_0} , \, Nx \hspace{-.2cm} \pmod 1\right), & 0 \le w < p_0,\\
\\
\displaystyle \left( \frac{M(w - r_i)}{1-p_0},\,  \frac{x+i-1}{M} \right), &  r_i \le w <  r_{i+1}, 1\le i \le M.
\end{cases}
\end{equation}
See Figure~\ref{f:graph22half}(b) for an example.

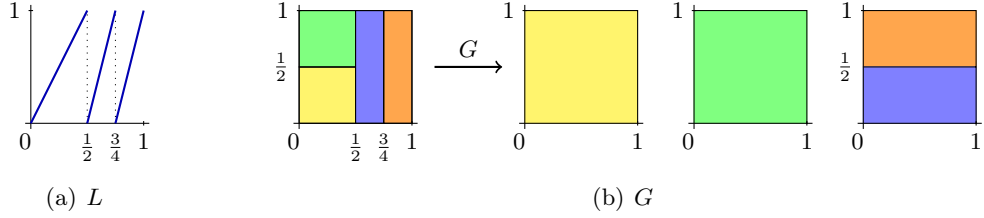
\begin{figure}[h]
\begin{center}
\subfigure[$L$]{
\begin{tikzpicture}[scale=1.5]
\draw(-.05,0)node[below]{\footnotesize $0$}--(1/2,0)node[below]{\footnotesize $\frac12$}--(3/4,0)node[below]{\footnotesize $\frac34$}--(1,0)node[below]{\footnotesize $1$}--(1.05,0);
\draw(0,-.05)--(0,1)node[left]{\footnotesize 1}--(0,1.05);
\draw(1,-.02)--(1,.02)(-.02,1)--(.02,1);
\draw[dotted](1/2,0)--(1/2,1)(3/4,0)--(3/4,1);
\draw[thick, blue!70!black] (0,0)--(1/2,1)(1/2,0)--(3/4,1)(3/4,0)--(1,1);
\end{tikzpicture}}
\hspace{1cm}
\subfigure[$G$]{
\begin{tikzpicture}[scale=1.5]
\filldraw[fill=yellow!70] (0,0) rectangle (1/2,1/2);
\filldraw[fill=green!50] (0,.5) rectangle (1/2,1);
\filldraw[fill=blue!50](1/2,0) rectangle (.75,1);
\filldraw[fill=orange!70] (.75,0) rectangle (1,1);
\draw(-.05,0)node[below]{\footnotesize $0$}--(1/2,0)node[below]{\footnotesize $\frac12$}--(3/4,0)node[below]{\footnotesize $\frac34$}--(1,0)node[below]{\footnotesize $1$}--(1.05,0)(1,0)--(1,1)--(0,1)(0,.5)--(.5,.5)(.5,0)--(.5,1)(.75,0)--(.75,1);
\draw(0,-.05)--(0,.5)node[left]{\footnotesize $\frac12$}--(0,1)node[left]{\footnotesize 1}--(0,1.05);
\draw(1,-.02)--(1,.02)(-.02,1)--(.02,1);

\draw[->, thick] (1.2,.5)--(1.8,.5);
\node[above] at (1.5,.5) {\footnotesize $G$};

\filldraw[yellow!70] (2,0) rectangle (3,1);
\draw(1.95,0)node[below]{\footnotesize $0$}--(3,0)node[below]{\footnotesize $1$}--(3.05,0)(3,0)--(3,1)--(2,1);
\draw(2,-.05)--(2,1)node[left]{\footnotesize 1}--(2,1.05);
\draw(3,-.02)--(3,.02)(1.98,1)--(2.02,1);

\filldraw[green!50] (3.5,0) rectangle (4.5,1);
\draw(3.45,0)node[below]{\footnotesize $0$}--(4.5,0)node[below]{\footnotesize $1$}--(4.55,0)(4.5,0)--(4.5,1)--(3.5,1);
\draw(3.5,-.05)--(3.5,1)node[left]{\footnotesize 1}--(3.5,1.05);
\draw(4.5,-.02)--(4.5,.02)(3.48,1)--(3.52,1);

\filldraw[orange!70] (5,.5) rectangle (6,1);
\filldraw[blue!50] (5,0) rectangle (6,.5);
\draw(4.95,0)node[below]{\footnotesize $0$}--(6,0)node[below]{\footnotesize $1$}--(6.05,0)(6,0)--(6,1)--(5,1)(5,.5)--(6,.5);
\draw(5,-.05)--(5,.5)node[left]{\footnotesize $\frac12$}--(5,1)node[left]{\footnotesize 1}--(5,1.05);
\draw(6,-.02)--(6,.02)(4.98,1)--(5.02,1);

\end{tikzpicture}}
\caption{Graphs of $L$ and $G$ for $(M,N) = (2,2)$ and $p_0=\frac12$. $G$ maps the colored areas in the unit square on the left to the areas of the same color on the right.}
\label{f:graph22half}
\end{center}
\end{figure}

\vskip .2cm
Consider the invertible extension $\Gamma: [0,1)^3 \to [0,1)^3$ of the maps $L: [0,1) \to [0,1)$  from \eqref{e:L} and $G: [0,1)^2 \to [0,1)^2$ from \eqref{e:G} given by
\begin{equation}\label{e:Gamma}
	\Gamma (w,x,y) =\begin{cases}
		\displaystyle \left(\frac{w}{p_0} , Nx -j , \frac{p_0(y+j)}{N} \right),  
		& \begin{array}{@{}c@{}} 0 \le w < p_0, \\ 
			\frac{j}{N} \le x < \frac{j+1}{N}, 0\le j < N,
		\end{array}
		\\
		\\
		\displaystyle \left( \frac{M(w - r_{i})}{1-p_0} , \frac{x+i-1}{M}  , (1-p_0) y + p_0 \right),  & r_{i} \le w < r_{i+1},  1\le i \le M. 
	\end{cases}
\end{equation}
For $M=N=2$ and $p_0=\frac12$, we get
\[ \Gamma (w,x,y) =	\begin{cases} %\label{e:Gamma22}
	\displaystyle \left(2 w , 2x -j, \frac{y+j}{4} \right),  & \begin{array}{@{}c@{}} 0 \le w < 1/2, \\ 
		\frac{j}{2} \le x < \frac{j+1}{2}, j=0,1,
	\end{array}\\
	\\
	\displaystyle \left( 4 w - (2+i)  , \frac{x+i}{2}  , \frac{y+1}{2} \right),  &  \frac12 + \frac{i}{4} \le w < \frac12 + \frac{i+1}{4},  i=0,1,
\end{cases}\]
a graphical depiction of which is shown in Figure~\ref{f:baker}. This particular map is somewhat reminiscent of the two-dimensional baker map $B$ on $[0,1)^2$ given by
\[ B(w,x) =   \begin{cases}
	\displaystyle \left(2w , \frac{x}{2}\right), & 0\le w < \frac{1}{2},\\
	\\
	\displaystyle \left(2w -1  , \frac{x+1}{2}\right), &  \frac{1}{2} \le w < 1,
\end{cases}\]
which has an expanding and a contracting direction, or more specific, a positive Lyapunov exponent $\ln (2)$ and a negative Lyapunov exponent $-\ln(2)$. The iterated function systems that we analyze in this article thus inspire three-dimensional analogues of the baker map. Similar maps feature in \cite{MR4287984} in a study of hetero-chaos.

\vskip .2cm
The map $\Gamma$ is invertible, the inverse being given by
\[ \Gamma^{-1} (w,x,y) = \begin{cases}
	\displaystyle \left( p_0 w  , \frac{x+j}{N} ,  \frac{Ny}{p_0} -j \right), & \frac{jp_0}{N}  \le y <  \frac{(j+1)p_0}{N},  0\le j < N,\\
	\\
	\displaystyle \left(\frac{(1-p_0)(w+i)}{M}  + p_0, M x -i ,   \frac{y-p_0}{1-p_0}  \right), & \begin{array}{@{}c@{}} p_0 \le y < 1, \\ 
		\frac{i}{M} \le x < \frac{i+1}{M},  0\le i < M.
	\end{array}
\end{cases}\]
Note that all maps $L$, $G$ and $\Gamma$ have Lebesgue measure, with appropriate dimension, as invariant measure. We have the following relation between $\Gamma$ and $F$ (for the purpose of the statement considered on compact spaces).

\begin{lemma}\label{p:factor}
The skew product $F: \Sigma \times [0,1] \to \Sigma \times [0,1]$ is a factor of the three-dimensional map $\Gamma: [0,1]^3 \to [0,1]^3$.
\end{lemma}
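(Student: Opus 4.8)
The plan is to exhibit the factor map explicitly and to verify the semiconjugacy branch by branch. Recall from \eqref{q:isomh} that $h\colon\Sigma\to[0,1]$ is a measurable isomorphism with $h\circ\sigma=L\circ h$ and $h_\ast\nu=\lambda$; let $g\colon[0,1]\to\Sigma$ denote its measurable inverse, defined off a countable set, so that $g\circ L=\sigma\circ g$ and $g_\ast\lambda=\nu$. From the defining series one gets $h(\omega)=r_{\omega_0}+p_{\omega_0}h(\sigma\omega)\in[r_{\omega_0},r_{\omega_0+1})$, so $h$ maps the cylinder $[i]$ into $[r_i,r_{i+1})$; being essentially onto, $g$ therefore sends almost every $w\in[r_i,r_{i+1})$ to a sequence with first symbol $i$. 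I would then set
\[\pi\colon[0,1]^3\to\Sigma\times[0,1],\qquad\pi(w,x,y)=\bigl(g(w),x\bigr),\]
forgetting the third coordinate. This $\pi$ is measurable, defined up to a null set, maps onto a full-measure set, and $\pi_\ast\lambda=g_\ast\lambda\times\lambda=\nu\times\lambda=\mu$.

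It remains to check $\pi\circ\Gamma=F\circ\pi$ almost everywhere, and the crux is simply that in \eqref{e:Gamma} the first coordinate of $\Gamma$ equals $L(w)$ on each branch while the second coordinate applies $f_{(g(w))_0}$ to $x$. Indeed, on $0\le w<p_0$, where $(g(w))_0=0$ since $[0,p_0)=[r_0,r_1)$, the map $\Gamma$ sends $w\mapsto w/p_0=L(w)$ and $x\mapsto Nx-j=Nx\ (\mathrm{mod}\ 1)=f_0(x)$ (with $j/N\le x<(j+1)/N$), so $\pi\Gamma(w,x,y)=(g(L(w)),f_0(x))=(\sigma g(w),f_{(g(w))_0}(x))=F\pi(w,x,y)$; and on $r_i\le w<r_{i+1}$ with $1\le i\le M$, where $(g(w))_0=i$, the map $\Gamma$ sends $w\mapsto M(w-r_i)/(1-p_0)=L(w)$ and $x\mapsto(x+i-1)/M=f_i(x)$, giving the same identity. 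Using $g\circ L=\sigma\circ g$, this yields the intertwining relation off a null set, so $([0,1]^3,\lambda,\Gamma)$ factors onto $(\Sigma\times[0,1],\mu,F)$ through $\pi$. Equivalently, one may first forget $y$ to obtain a topological semiconjugacy from $\Gamma$ to the map $G$ of \eqref{e:G}, and then note that $G$ is measurably isomorphic to $F$ via $h$.

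The main (and essentially only) obstacle is bookkeeping around null sets: $g$ is a genuine bijection only after discarding a countable set, the branch boundaries $\{w=r_i\}$ and $\{x=j/N\}$ must be excluded, and one should check that their forward $\Gamma$-orbits stay Lebesgue-null, which is immediate since $\Gamma$ preserves $\lambda$ and these sets are countable unions of lower-dimensional pieces. I would also flag explicitly that ``factor'' is meant here in the measure-preserving sense of ergodic theory — a genuine topological factor map cannot exist, because $[0,1]^3$ is connected while $\Sigma\times[0,1]$ is not — and that this is exactly the form in which the lemma is applied afterwards, namely to transport ergodicity from $\Gamma$ (equivalently from $G$, or from $L$) to the skew product $F$.
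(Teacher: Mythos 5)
Your proof is correct and follows essentially the same route as the paper: define the factor map by composing the inverse of the conjugacy $h$ on the first coordinate with the projection that forgets $y$, then verify the intertwining $\pi\circ\Gamma=F\circ\pi$ and measure-preservation off a null set. You spell out the branch-by-branch check and the null-set bookkeeping that the paper compresses into ``one easily verifies,'' and your remark that the factor is necessarily measure-theoretic (not topological, by connectedness) is a correct and useful clarification.
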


\begin{proof}
	Recall the definition of the isomorphism $h: \Sigma \to [0,1]$ between the map $L:[0,1] \to [0,1]$ and the left shift $\sigma: \Sigma \to \Sigma$ from \eqref{q:isomh}. Let $\pi_{w,x}: [0,1]^3 \to [0,1]^2, \, (w,x,y) \to (w,x)$ be the canonical projection onto the first two coordinates. One easily verifies that the map $h^{-1} \circ \pi_{w,x} : [0,1]^3 \to \Sigma \times [0,1]$ (up to sets of measure zero) is surjective, measurable, measure preserving and satisfies $F \circ (h^{-1} \circ \pi_{w,x}) = (h^{-1} \circ \pi_{w,x}) \circ \Gamma$, thus constituting a factor map. 
\end{proof} 

\tdplotsetmaincoords{70}{115}

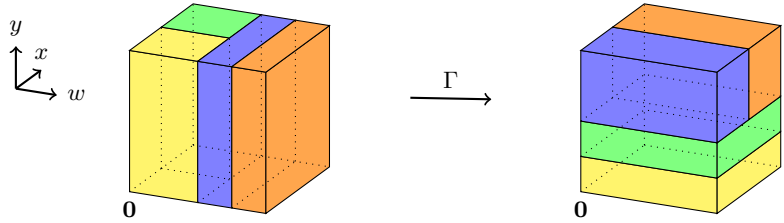
\begin{figure}[ht]
	\begin{center}
		\begin{tikzpicture}[scale=2,tdplot_main_coords]  
			\draw[fill=yellow!70] (0,0,0) -- (0,.5,0) -- (0,.5,1) -- (0,0,1) -- cycle;
			\draw[fill=yellow!70] (0,0,1)--(0,.5,1)--(-1/2,1/2,1)--(-1/2,0,1)--cycle;
			\draw[fill=green!50] (-1/2,0,1)--(-1/2,.5,1)--(-1,1/2,1)--(-1,0,1)--cycle; 
			%    \draw[fill=magenta!70] (-2/3,0,1)--(-2/3,.5,1)--(-1,1/2,1)--(-1,0,1)--cycle;
			\draw[fill=blue!50!white] (0,1/2,0)--(0,3/4,0)--(0,3/4,1)--(0,1/2,1)--cycle; 
			\draw[fill=blue!50] (0,1/2,1)--(0,3/4,1)--(-1,3/4,1)--(-1,1/2,1)--cycle; 
			\draw[fill=orange!70] (0,3/4,0)--(0,1,0)--(0,1,1)--(0,3/4,1)--cycle;
			\draw[fill=orange!70] (0,1,0)--(-1,1,0)--(-1,1,1)--(0,1,1)--cycle;
			\draw[fill=orange!70] (0,3/4,1)--(0,1,1)--(-1,1,1)--(-1,3/4,1)--cycle;
			
			% \draw[->](0,0,-.1)--(0,1,-.1)node[below]{\footnotesize $w$};
			\draw[dotted] (-1,0,0)--(-1,0,1)(0,0,0)--(-1,0,0)(-1,0,0)--(-1,1,0);
			\draw[dotted] (0,3/4,0)--(-1,3/4,0)(0,1/2,0)--(-1,1/2,0)(-1,3/4,0)--(-1,3/4,1)(-1,1/2,0)--(-1,1/2,1);
			\draw[dotted] (-1/2,0,0)--(-1/2,0,1)(-1/2,1/2,0)--(-1/2,1/2,1)(-1/2,0,0)--(-1/2,1/2,0);
			
			%\node[below] at (0,1,0) {\footnotesize $w$}; 
			% \node[left] at (0,0,1) {\footnotesize $y$};
			% \node[right] at (-1,1,0) {\footnotesize $x$}; 
			
			\draw[->, thick] (-1,-1.3,.2)--(-1,-1,.2)node[right]{\footnotesize $w$};
			\draw[->, thick] (-1,-1.3,.2)--(-1.4,-1.3,.2)node[above]{\footnotesize $x$};
			\draw[->, thick] (-1,-1.3,.2)--(-1,-1.3,.5)node[above]{\footnotesize $y$}; 
			
			\draw[->, thick] (-1.2,1.5,.5)--(-1.4,2,.5);
			\node[above] at (-1.3,1.75,.5) {\footnotesize $\Gamma$};
			
			\node[below] at (0,0,0) {\footnotesize $\mathbf 0$};
			
		\end{tikzpicture}  
		\hspace{.7cm}
		\begin{tikzpicture}[scale=2,tdplot_main_coords]  
			\draw[fill=yellow!70] (0,0,0) -- (0,1,0) -- (0,1,1/4) -- (0,0,1/4) -- cycle;
			\draw[fill=yellow!70] (0,1,0)--(-1,1,0)--(-1,1,1/4)--(0,1,1/4)--cycle;
			\draw[fill=green!50] (0,0,1/4)--(0,1,1/4)--(0,1,1/2)--(0,0,1/2)--cycle;
			\draw[fill=green!50] (0,1,1/4)--(-1,1,1/4)--(-1,1,1/2)--(0,1,1/2)--cycle; 
			%    \draw[fill=magenta!70] (0,0,2/6)--(0,1,2/6)--(0,1,3/6)--(0,0,3/6)--cycle;
			%     \draw[fill=magenta!70](0,1,2/6)--(-1,1,2/6)--(-1,1,3/6)--(0,1,3/6)--cycle;
			\draw[fill=blue!50] (0,0,1/2)--(0,1,1/2)--(0,1,1)--(0,0,1)--cycle; 
			\draw[fill=blue!50] (0,0,1)--(0,1,1)--(-1/2,1,1)--(-1/2,0,1)--cycle; 
			\draw[fill=blue!50] (0,1,1/2)--(-1/2,1,1/2)--(-1/2,1,1)--(0,1,1)--cycle; 
			\draw[fill=orange!70] (-1/2,0,1)--(-1,0,1)--(-1,1,1)--(-1/2,1,1)--cycle;
			\draw[fill=orange!70] (-1/2,1,1/2)--(-1/2,1,1)--(-1,1,1)--(-1,1,1/2)--cycle;
			
			% \draw[->](0,0,-.1)--(0,1,-.1)node[below]{\footnotesize $w$};
			\draw[dotted] (-1,0,0)--(-1,0,1)(0,0,0)--(-1,0,0)(-1,0,0)--(-1,1,0);
			\draw[dotted] (0,0,1/4)--(-1,0,1/4)--(-1,1,1/4)(0,0,1/2)--(-1,0,1/2)--(-1,1,1/2);
			\draw[dotted] (-1/2,0,1)--(-1/2,0,1/2)--(-1/2,1,1/2);
			
			\node[below] at (0,0,0) {\footnotesize $\mathbf 0$};
		\end{tikzpicture}
	\end{center}
	\caption{The map $G$ for $M=N=2$ and $p_0=\frac12$ maps the regions on the left to the regions on the right according to the colors.}
	\label{f:baker}
\end{figure}

\subsection{Multivalued maps}\label{s:multivalued}

It is sometimes helpful to consider an associate iterated function system of multivalued maps. Write $\mathbb{K}$ for the class of nonempty compact subsets of $[0,1)$. Define the multivalued map $F_1 : \mathbb{K} \to \mathbb{K}$ by 
\begin{equation}\label{e:F0}
F_1   (A) = \bigcup_{i=1}^M f_i(A),
\end{equation}
in which the contracting maps $f_1,\ldots,f_M$ are combined.
We also write 
\begin{equation}\label{e:F1}
F_0 (A) = f_0(A).
\end{equation}
We can then look at the iterated function system generated by $F_0$ and $F_1$. A composition $F^n_\eta$ with $\eta \in \{0,1\}^\mathbb{N}$ is a multivalued map. As all maps $f_1,\ldots,f_M$ that make up $F_1$ have the same constant derivative $1/M$, and $f_0$ has constant derivative $N$, we can speak of $(F^n_\eta)'$. The graphs in $F_\eta^n$ are equally spaced line pieces with constant slope $(F^n_\eta)'$. The number of elements in the set $F_\eta^n (\{x\})$ is independent of $x \in [0,1)$, so $\# F_\eta^n (\{x\}) = \# F_\eta^n (\{0\})$, and $F^n_\eta (\{0\})$ is always of the form 
\[S_j := \{ i/M^j \; ; \; 0\le i < M^j\}\]
for some $j \ge 0$. Let $\Pi : \Sigma \to \Sigma_2$ be the projection that maps all symbols $1,\ldots,M$ to $1$; so $\Pi (\omega) = \eta$ with
\[ \eta_i = \begin{cases} 
0, & \omega_i =0, \\
1, & \omega_i \in \{ 1,\ldots,M\}.
\end{cases}\]
We use the next lemma in the section on intermittency.

\begin{lemma}\label{l:feta}
Let $\eta = \eta_0 \cdots \eta_{n-1} \in \{0,1\}^n$, $n \ge 1$, and let $j \ge 0$ be such that $F^n_\eta (\{0\}) =  S_j$. Then for each $i \ne k$,
\[ \nu (\{ \omega \in \Pi^{-1}[\eta] \, ; \, f_\omega^n(0) = i/M^j \}) = \nu (\{ \omega \in \Pi^{-1}[\eta] \, ; \, f_\omega^n(0) = k/M^j \}).\]   
\end{lemma}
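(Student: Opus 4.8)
The plan is to exploit the symmetry among the contracting maps $f_1,\ldots,f_M$: they all have the same slope $1/M$ and are translates of one another, so replacing a symbol $\omega_m \in \{1,\ldots,M\}$ by a different symbol in $\{1,\ldots,M\}$ permutes the possible endpoints $f^n_\omega(0)$ within the grid $S_j$ in a measure-preserving way (since, by \eqref{q:vectorp}, all these symbols carry the same probability $(1-p_0)/M$). First I would set up the measure on the relevant conditioned space. Since $\Pi^{-1}[\eta]$ consists exactly of those $\omega$ with $\omega_m = 0$ whenever $\eta_m = 0$ and $\omega_m \in \{1,\ldots,M\}$ whenever $\eta_m = 1$, the restriction of $\nu$ to $\Pi^{-1}[\eta]$ is (a multiple of) the product measure that is a point mass at $0$ in the coordinates $m$ with $\eta_m=0$ and uniform on $\{1,\ldots,M\}$ in the coordinates $m$ with $\eta_m = 1$, with the tail coordinates $\omega_n,\omega_{n+1},\ldots$ distributed by the Bernoulli measure but irrelevant here because $f^n_\omega(0)$ depends only on $\omega_0,\ldots,\omega_{n-1}$. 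So it suffices to work with the finite uniform counting measure on the set $W_\eta$ of words $u = u_0\cdots u_{n-1}$ compatible with $\eta$, and show that the level sets $\{u \in W_\eta : f^n_u(0) = i/M^j\}$ all have the same cardinality as $i$ ranges over $\{0,\ldots,M^j-1\}$.

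Next I would produce the equidistribution by exhibiting, for any $i \ne k$ with $i/M^j, k/M^j \in S_j = F^n_\eta(\{0\})$, a bijection $W_\eta \to W_\eta$ carrying $\{f^n_u(0) = i/M^j\}$ onto $\{f^n_u(0) = k/M^j\}$. The natural candidate is an affine bookkeeping argument: writing the last contracting coordinate, i.e. the largest $m < n$ with $\eta_m = 1$, note that changing $u_m$ from $a$ to $b$ (both in $\{1,\ldots,M\}$) changes $f^n_u(0)$ by a fixed amount $(b-a)\cdot(1/M)\cdot(\text{slope of the maps applied after step }m)$; because the maps after step $m$ are all either $f_0$ (slope $N$, which however cannot be the \emph{last} contraction) or contractions of slope $1/M$, and because $F^n_\eta(\{0\}) = S_j$ forces the net effect to land on the grid $M^{-j}\mathbb{Z}$, these shifts act transitively (as a group generated by unit translations mod $1$ on $S_j$, since $S_j$ is itself a coset structure) on $S_j$. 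Concretely, I would argue by induction on $n$, peeling off $\eta_{n-1}$: if $\eta_{n-1} = 0$ then $f^n_u(0) = f_0(f^{n-1}_{u'}(0))$ with $f_0$ an $N$-to-$1$ map that is affine and grid-respecting, and one checks $F^{n-1}_{\eta'}(\{0\}) = S_{j}$ as well, so the inductive hypothesis plus the fact that $f_0$ maps each fiber of $S_j$ onto $S_{j}$ with constant multiplicity gives the claim; if $\eta_{n-1} = 1$ then $F^n_\eta(\{0\}) = \bigcup_{i=1}^M f_i(S_{j-1}) = S_j$ and each $f_i$ is a bijection $S_{j-1} \to$ (a shifted copy), so the uniform count on $S_{j-1}$ (inductive hypothesis) pushes forward to a uniform count on $S_j$ because the $M$ translates $f_1(S_{j-1}),\ldots,f_M(S_{j-1})$ tile $S_j$ exactly.

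The main obstacle I anticipate is the inductive step when $\eta_{n-1} = 0$: the map $f_0(x) = Nx \bmod 1$ is $N$-to-$1$ and not injective, so I must check that it sends the uniform measure on $S_j$ (more precisely, on $F^{n-1}_{\eta'}(\{0\})$) to the uniform measure on $F^n_\eta(\{0\})$ — equivalently that every point of the target grid has the same number of $f_0$-preimages inside the source grid. This is true because $S_j = \{i/M^j\}$ is invariant under $x \mapsto x + 1/N$ whenever $N \mid M^j$ is \emph{not} assumed, so one actually has to track how $f_0$ interacts with the denominator: $f_0$ sends $S_j$ into $S_{j'}$ where $M^{j'}$ is the lcm-type quantity accounting for the factor $N$, and I need the precise bookkeeping (already implicit in the paragraph before the lemma, where it is asserted that $F^n_\eta(\{0\})$ is \emph{always} of the form $S_j$) to conclude that preimage counts are constant. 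Once that combinatorial fact about $f_0$ acting on these arithmetic-progression grids is pinned down, the rest is a routine transitivity/counting argument, and translating back from the counting measure on $W_\eta$ to $\nu|_{\Pi^{-1}[\eta]}$ via the product structure of the Bernoulli measure finishes the proof.
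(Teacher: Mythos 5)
Your outline matches the structure of the paper's argument: you reduce the measure statement to a word-counting problem (correctly using that, with $\mathbf p$ as in \eqref{q:vectorp}, all length-$n$ cylinders inside $\Pi^{-1}[\eta]$ carry equal $\nu$-mass) and then propagate equidistribution on a grid through applications of $F_1$ and $F_0$. The $\eta_{n-1}=1$ step you handle correctly. But the $\eta_{n-1}=0$ step is the whole content of the lemma, and you flag it as an obstacle without actually resolving it: the sentence beginning ``This is true because $S_j$ is invariant under $x \mapsto x + 1/N$ whenever $N\mid M^j$ is not assumed'' does not establish anything, and the appeal to an ``lcm-type quantity'' is not the right bookkeeping. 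The ingredient you are missing, which is the one-line core of the paper's proof, is that $f_0(x)=Nx \pmod 1$ wraps $[0,1)$ around itself $N$ times with constant slope, so under the identification $k/M^l \leftrightarrow k \in \mathbb Z/M^l\mathbb Z$ its restriction to $S_l$ is the group endomorphism $k \mapsto Nk$. All fibers of a group homomorphism over a point of its image have the same cardinality, namely $\gcd(N,M^l)$. That constancy of preimage counts is exactly what makes the uniform count on $S_l$ push forward to a uniform count on $f_0(S_l)$; without it your induction genuinely fails at every application of $f_0$.

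A secondary issue with running the argument as a clean induction on $n$: when $\gcd(M,N)>1$, the intermediate images $F^m_{\eta_0\cdots\eta_{m-1}}(\{0\})$ need not themselves be of the form $S_{j'}$ (for instance $M=6$, $N=4$, $\eta_0\eta_1\eta_2 = 110$ gives $\{k/9 : 0\le k<9\}$), so the inductive invariant ``the image is $S_{j'}$ and the level sets of $u\mapsto f^m_u(0)$ have equal cardinality'' is not preserved step by step. You should weaken it to ``the image is a uniform grid $\{i/D : 0\le i<D\}$ with equidistributed level sets''; the group-homomorphism argument above works verbatim for any such $D$. The paper sidesteps this by stating the constant-fiber property of $f_0$ directly and splitting into the coprime/non-coprime cases rather than phrasing things as an induction, but the substance is the same.
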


\begin{proof}
Set $\gamma = \# \{ 0 \le i \le n-1\, ; \,  \eta_i =1  \}$ for the number of occurrences of the digit 1 in $\eta$. Note that $\Pi^{-1}[\eta]$ is the disjoint union of $M^\gamma$ cylinders of length $n$ in $\Sigma$. Note also that $F_1(S_0) = S_M$ and $F_1(S_l) = S_{l M}$ for any $l >0$, while $F_0 (S_l) \subseteq S_l$. There are $i \neq k$ such that $f_0(i/M^l) = f_0(k/M^l)$ if and only if there is an $i$ such that $f_0(i/M^l) =0$, so such that $Ni/M^l \in \mathbb N$, if and only if $N$ and $M$ share a common prime factor. Hence, if $M$ and $N$ are relatively prime, then $F^n_\eta (\{0\}) = S_\gamma$ and for each $i$,
\[ \nu (\{ \omega \in \Pi^{-1}[\eta] \, ; \, f_\omega^n(0) = i/M^\gamma \}) = p^{n-\gamma} \Big( \frac{1-p}{M} \Big)^\gamma.\]
Suppose $N$ and $M$ are not relatively prime. The map $f_0$ wraps the unit interval around itself $N$ times with constant expansion factor. So, for any $0 \le m < l$ for which $f_0 (S_l) = S_m$ it follows that for each $i_1,i_2$,
\[ \# \{ 0 \le k \le M^l-1 \, ; \, f_0 (k/M^l) = i_1 \} = \#  \{ 0 \le k \le M^l-1 \, ; \, f_0 (k/M^l) = i_2 \}.\]
Since all cylinders $[\omega_0 \cdots  \omega_{n-1}] \subseteq \Pi^{-1}[\eta]$ have equal $\nu$-measure, this implies the lemma.
\end{proof}

Properties of graphs of $f^n_\omega$ and $F^n_\eta$ in relation to each other
are illustrated in Figures~\ref{f:1010010}~and~\ref{f:0010010}.

\begin{figure}[!ht]
\begin{center}
\subfigure[$(3,3),\eta=(11010010)$]{
\begin{tikzpicture}[scale=3.5]
\draw(-.05,0)node[below]{\footnotesize $0$}--(1/9,0)node[below]{\footnotesize $\frac19$}--(3/9,0)node[below]{\footnotesize $\frac13$}--(5/9,0)node[below]{\footnotesize $\frac59$}--(7/9,0)node[below]{\footnotesize $\frac79$}--(1,0)node[below]{\footnotesize $1$}--(1.05,0);
\draw(0,-.05)--(0,1/9)node[left]{\footnotesize $\frac19$}--(0,1/3)node[left]{\footnotesize $\frac13$}--(0,5/9)node[left]{\footnotesize $\frac59$}--(0,7/9)node[left]{\footnotesize $\frac79$}--(0,1)node[left]{\footnotesize 1}--(0,1.05);
\draw(1,-.02)--(1,.02)(1/9,-.01)--(1/9,.01)(2/9,-.01)--(2/9,.01)(3/9,-.01)--(1/3,.01)(4/9,-.01)--(4/9,.01)(5/9,-.01)--(5/9,.01)(6/9,-.01)--(6/9,.01)(7/9,-.01)--(7/9,.01)(8/9,-.01)--(8/9,.01);
\draw(-.02,1)--(.02,1)(-.01,1/9)--(.01,1/9)(-.01,2/9)--(.01,2/9)(-.01,1/3)--(.01,1/3)(-.01,4/9)--(.01,4/9)(-.01,5/9)--(.01,5/9)(-.01,6/9)--(.01,6/9)(-.01,7/9)--(.01,7/9)(-.01,8/9)--(.01,8/9);
%\draw[dotted](.5,0)--(.5,1)(0,1/3)--(1,1/3)(0,2/3)--(1,2/3);
\draw[thick, blue!70!black] (0,0)--(1,1)(1/9,0)--(1,8/9)(2/9,0)--(1,7/9)(3/9,0)--(1,6/9)(4/9,0)--(1,5/9)(5/9,0)--(1,4/9)(6/9,0)--(1,3/9)(7/9,0)--(1,2/9)(8/9,0)--(1,1/9)(0,1/9)--(8/9,1)(0,2/9)--(7/9,1)(0,3/9)--(6/9,1)(0,4/9)--(5/9,1)(0,5/9)--(4/9,1)(0,6/9)--(3/9,1)(0,7/9)--(2/9,1)(0,8/9)--(1/9,1);
\draw[ultra thick, red!70!black] (0,1/9)--(1/9,2/9)(1/9,1/9)--(2/9,2/9)(2/9,1/9)--(3/9,2/9)(3/9,1/9)--(4/9,2/9)(4/9,1/9)--(5/9,2/9)(5/9,1/9)--(6/9,2/9)(6/9,1/9)--(7/9,2/9)(7/9,1/9)--(8/9,2/9)(8/9,1/9)--(1,2/9);
\end{tikzpicture}
}
\hspace{.5cm}
\subfigure[$(3,2),\eta=(1010010)$]{
\begin{tikzpicture}[scale=3.5]
\draw(-.05,0)node[below]{\footnotesize $0$}--(1/8,0)node[below]{\footnotesize $\frac18$}--(1/4,0)node[below]{\footnotesize $\frac14$}--(3/8,0)node[below]{\footnotesize $\frac38$}--(1/2,0)node[below]{\footnotesize $\frac12$}--(5/8,0)node[below]{\footnotesize $\frac58$}--(3/4,0)node[below]{\footnotesize $\frac34$}--(7/8,0)node[below]{\footnotesize $\frac78$}--(1,0)node[below]{\footnotesize $1$}--(1.05,0);
\draw(0,-.05)--(0,1/9)node[left]{\footnotesize $\frac19$}--(0,1/3)node[left]{\footnotesize $\frac13$}--(0,5/9)node[left]{\footnotesize $\frac59$}--(0,7/9)node[left]{\footnotesize $\frac79$}--(0,1)node[left]{\footnotesize 1}--(0,1.05);
\draw(1,-.02)--(1,.02)(1/16,-.01)--(1/16,.01)(1/8,-.01)--(1/8,.01)(3/16,-.01)--(3/16,.01)(1/4,-.01)--(1/4,.01)(5/16,-.01)--(5/16,.01)(3/8,-.01)--(3/8,.01)(7/16,-.01)--(7/16,.01)(1/2,-.01)--(1/2,.01)(9/16,-.01)--(9/16,.01)(5/8,-.01)--(5/8,.01)(11/16,-.01)--(11/16,.01)(3/4,-.01)--(3/4,.01)(13/16,-.01)--(13/16,.01)(14/16,-.01)--(14/16,.01)(15/16,-.01)--(15/16,.01);
\draw(-.02,1)--(.02,1)(-.01,1/27)--(.01,1/27)(-.01,2/27)--(.01,2/27)(-.01,3/27)--(.01,3/27)(-.01,4/27)--(.01,4/27)(-.01,5/27)--(.01,5/27)(-.01,6/27)--(.01,6/27)(-.01,7/27)--(.01,7/27)(-.01,8/27)--(.01,8/27)(-.01,9/27)--(.01,9/27)(-.01,10/27)--(.01,10/27)(-.01,11/27)--(.01,11/27)(-.01,12/27)--(.01,12/27)(-.01,13/27)--(.01,13/27)(-.01,14/27)--(.01,14/27)(-.01,15/27)--(.01,15/27)(-.01,16/27)--(.01,16/27)(-.01,17/27)--(.01,17/27)(-.01,18/27)--(.01,18/27)(-.01,19/27)--(.01,19/27)(-.01,20/27)--(.01,20/27)(-.01,21/27)--(.01,21/27)(-.01,22/27)--(.01,22/27)(-.01,23/27)--(.01,23/27)(-.01,24/27)--(.01,24/27)(-.01,25/27)--(.01,25/27)(-.01,26/27)--(.01,26/27);
\draw[thick,dotted](0,0)--(1,1);
\draw[thick, blue!70!black] (0,0)--(1,16/27)(1/16,0)--(1,15/27)(2/16,0)--(1,14/27)(3/16,0)--(1,13/27)(4/16,0)--(1,12/27)(5/16,0)--(1,11/27)(6/16,0)--(1,10/27)(7/16,0)--(1,9/27)(8/16,0)--(1,8/27)(9/16,0)--(1,7/27)(10/16,0)--(1,6/27)(11/16,0)--(1,5/27)(12/16,0)--(1,4/27)(13/16,0)--(1,3/27)(14/16,0)--(1,2/27)(15/16,0)--(1,1/27);
\draw[thick, blue!70!black](0,1/27)--(1,17/27)(0,2/27)--(1,18/27)(0,3/27)--(1,19/27)(0,4/27)--(1,20/27)(0,5/27)--(1,21/27)(0,6/27)--(1,22/27)(0,7/27)--(1,23/27)(0,8/27)--(1,24/27)(0,9/27)--(1,25/27)(0,10/27)--(1,26/27)(0,11/27)--(1,1);
\draw[thick, blue!70!black](0,12/27)--(15/16,1)(0,13/27)--(14/16,1)(0,14/27)--(13/16,1)(0,15/27)--(12/16,1)(0,16/27)--(11/16,1)(0,17/27)--(10/16,1)(0,18/27)--(9/16,1)(0,19/27)--(8/16,1)(0,20/27)--(7/16,1)(0,21/27)--(6/16,1)(0,22/27)--(5/16,1)(0,23/27)--(4/16,1)(0,24/27)--(3/16,1)(0,25/27)--(2/16,1)(0,26/27)--(1/16,1);
\draw[ultra thick, red!70!black] (0,26/27)--(1/16,1)(1/16,18/27)--(4/16,21/27)(4/16,24/27)--(7/16,1)(7/16,18/27)--(8/16,19/27)(8/16,26/27)--(9/16,1)(9/16,18/27)--(12/16,21/27)(12/16,24/27)--(15/16,1)(15/16,18/27)--(1,19/27);
\end{tikzpicture}
}
\caption{Left picture: a plot of the graphs of $F^{8}_\eta$  for $(M,N) = (3,3)$ and $\eta = (11010010)$. The red graph is the graph of $f^{8}_\omega$ for $\omega = (12020020)$.
			Right picture: a plot of $F^7_\eta$  for $(M,N) = (3,2)$ and $\eta = (1010010)$. The red graph is the graph of $f^7_\omega$ for $\omega = (3020020)$.}
\label{f:1010010}
\end{center}	
\end{figure}
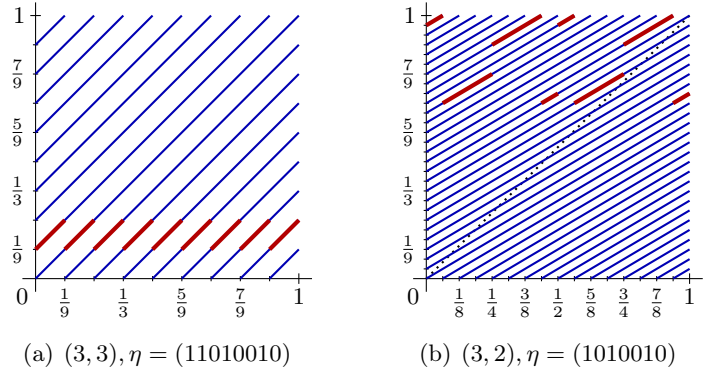

\section{Two-point dynamics}\label{s:2point}

This central section treats the dynamics of the skew product systems for different values of $(M,N)$ and $p_0$, focusing on convergence and divergence of orbits and statistical properties of orbits. We treat separately the cases with $L_{p_0} < 0$, $L_{p_0} = 0$ and $L_{p_0} > 0$. Note that for $p_0 = 1/2$, this is the same as $M>N$, $M=N$ and $M<N$, respectively. 
			
\subsection{$L_{p_0} < 0$ (Synchronization)}

If the contraction is stronger than the expansion,  one may expect the orbits of nearby points to converge to each other under identical compositions. The numerical observation in Figure~\ref{f:convergence} illustrates this. 
\begin{figure}[!ht]
	\begin{center}
		\includegraphics[height=3cm]{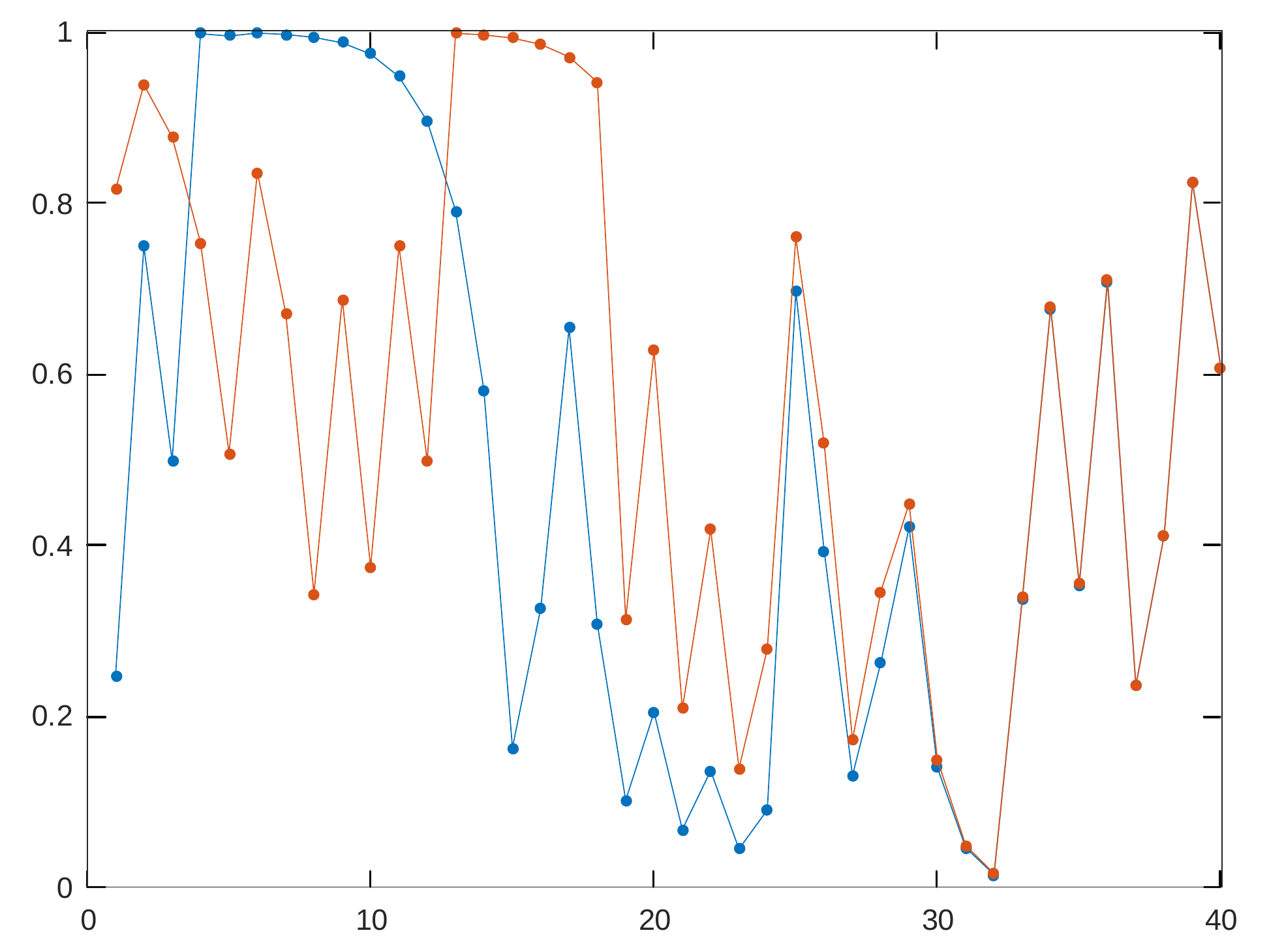}
		\hspace{1cm}
		\includegraphics[height=3cm]{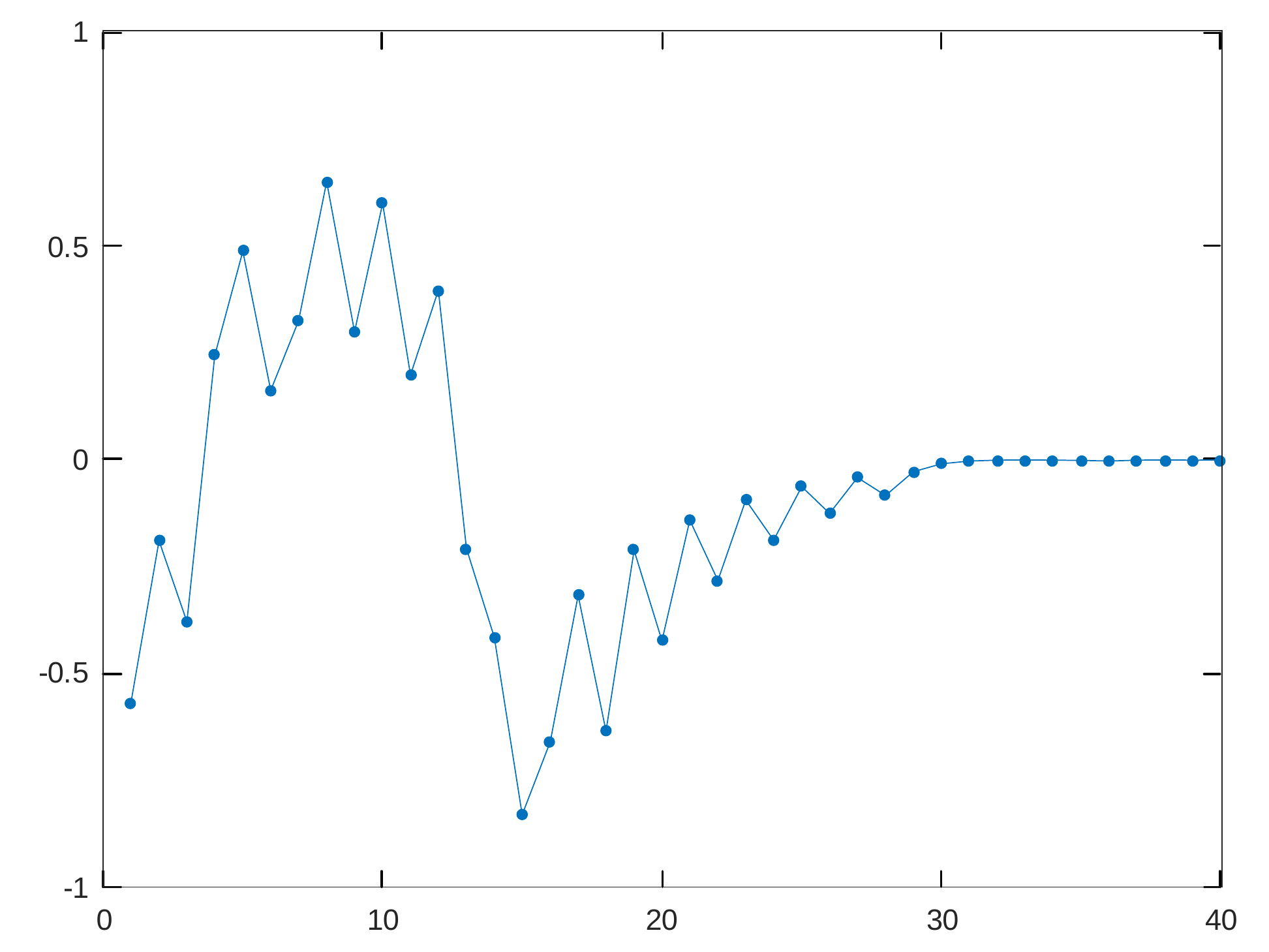}
		\caption{\label{f:convergence} Two time series of $f^n_\omega$ for two different initial points, for $(M,N)=(3,2)$ and $p_0=\frac12$. The signed difference between the two, depicted in the right panel, shows convergence of the orbits to each other.
		}
	\end{center}	
\end{figure}		

\vskip .2cm
We will establish such convergence in fact uniformly on $[0,1)$. The discontinuities of $f_0$ form an obstacle in the analysis, since nearby points are mapped a positive distance apart if they are on different sides of a point of discontinuity of $f_0$. Iterates $f^n_\omega$ may have many discontinuities on $[0,1)$, as the graph in Figure~\ref{f:graph<} illustrates. A Borel-Cantelli argument (see for instance \cite{MR3930614} for the Borel-Cantelli lemmas) however makes clear that orbits are only infrequently very close to the points of discontinuity of $f_0$, which allows to prove the following result.

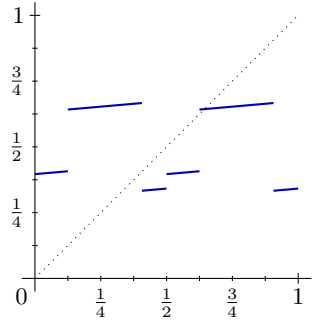
\begin{figure}[!ht]
\begin{center}
\begin{tikzpicture}[scale=3.5]
\draw(-.05,0)node[below]{\footnotesize $0$}--(1/4,0)node[below]{\footnotesize $\frac14$}--(1/2,0)node[below]{\footnotesize $\frac12$}--(3/4,0)node[below]{\footnotesize $\frac34$}--(1,0)node[below]{\footnotesize $1$}--(1.05,0);
\draw(0,-.05)--(0,1/4)node[left]{\footnotesize $\frac14$}--(0,1/2)node[left]{\footnotesize $\frac12$}--(0,3/4)node[left]{\footnotesize $\frac34$}--(0,1)node[left]{\footnotesize 1}--(0,1.05);
\draw(1,-.02)--(1,.02)(1/8,-.01)--(1/8,.01)(2/8,-.01)--(2/8,.01)(3/8,-.01)--(3/8,.01)(4/8,-.01)--(4/8,.01)(5/8,-.01)--(5/8,.01)(6/8,-.01)--(6/8,.01)(7/8,-.01)--(7/8,.01);
\draw(-.02,1)--(.02,1)(-.01,1/8)--(.01,1/8)(-.01,2/8)--(.01,2/8)(-.01,3/8)--(.01,3/8)(-.01,4/8)--(.01,4/8)(-.01,5/8)--(.01,5/8)(-.01,6/8)--(.01,6/8)(-.01,7/8)--(.01,7/8);
%\draw[dotted](.5,0)--(.5,1)(0,1/3)--(1,1/3)(0,2/3)--(1,2/3);
\draw[thick, blue!70!black] (0,289/729)--(1/8,11/27)(1/2,289/729)--(5/8,11/27)(29/32,1/3)--(1,83/243)(13/32,1/3)--(1/2,83/243)(5/8,52/81)--(29/32,2/3)(1/8,52/81)--(13/32,2/3);
\draw[dotted](0,0)--(1,1);
\end{tikzpicture}
\caption{The graph of an iterate $f^{12}_\omega$ for $(M,N)=(3,2)$ and some $\omega$. Although the slope $(f^{12}_\omega)'$  is small, the map is not a contraction on $[0,1)$ because of the discontinuities.}
\label{f:graph<}
\end{center}	
\end{figure}

\begin{theorem}\label{t:M>Nconvergence}
Consider $L_{p_0} < 0$. For all $x,y \in [0,1)$,
\[
\lim_{n\to \infty} | f^n_\omega (y) - f^n_\omega(x) | = 0
\]		
for $\nu$-almost all $\omega \in \Sigma$.
\end{theorem}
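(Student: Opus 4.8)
The plan is to estimate how often a typical orbit comes close to the discontinuity set of $f_0$, namely the set $\{0, 1/N, 2/N, \ldots, (N-1)/N\}$, and to show that between such close approaches the accumulated contraction is more than enough to shrink distances. First I would fix $x, y \in [0,1)$ and track the quantity $d_n = |f^n_\omega(x) - f^n_\omega(y)|$. As long as $f^n_\omega(x)$ and $f^n_\omega(y)$ lie in the same branch of $f_{\omega_n}$ (which is automatic whenever $\omega_n \ne 0$, and for $\omega_n = 0$ requires that no point of the form $j/N$ separates them), we have $d_{n+1} = |f'_{\omega_n}| \, d_n$, so $\ln d_n$ behaves like the Birkhoff sum $\sum_{i=0}^{n-1} \ln f'_{\omega_i}$, which by the strong law of large numbers drifts to $-\infty$ at rate $L_{p_0} < 0$. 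The only way $d_n$ can fail to contract is a "bad" step: $\omega_n = 0$ and the interval between the two orbit points contains some $j/N$. At a bad step the distance can jump to as large as order $1$, so the issue is to control the frequency and timing of bad steps.

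The key point is a Borel--Cantelli estimate. Set $\delta_n = e^{(L_{p_0}/2) n}$ (any sequence decaying slower than $e^{L_{p_0} n}$ but still summable-after-a-fixed-gauge works; the precise rate will be tuned). A bad step at time $n$ requires that the orbit point $f^n_\omega(x)$ is within $d_n$ of the lattice $\frac1N \mathbb Z$. I would show: along a stretch of steps with no bad step, $d_n \le C e^{(L_{p_0}/2) n}$ eventually, because the Birkhoff sum $\frac1n\sum \ln f'_{\omega_i} \to L_{p_0}$ implies $\sum \ln f'_{\omega_i} \le (L_{p_0}/2) n$ for all large $n$. Hence a bad step at a large time $n$ forces $f^n_\omega(x)$ to lie in the $\delta_n$-neighborhood of $\frac1N\mathbb Z$. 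Since $\mu = \nu \times \lambda$ is $F$-invariant with $\lambda$-marginal Lebesgue, the $\nu\times\lambda$-measure of $\{(\omega,x) : f^n_\omega(x) \in B_{\delta_n}(\frac1N\mathbb Z)\}$ is exactly $\lambda(B_{\delta_n}(\frac1N\mathbb Z)) = 2N\delta_n$ (or the obvious variant near the endpoints), which is summable in $n$. By Borel--Cantelli, for $\nu\times\lambda$-a.e. $(\omega,x)$ there are only finitely many bad steps; after the last one the distance contracts to $0$ exponentially. A standard Fubini argument then upgrades "a.e. $x$" to "every $x$" by choosing, for each $x$, a sequence $x_m \to x$ of good points and using that $d_n(x,y) \le d_n(x, x_m) + d_n(x_m, y)$ — but this needs care because $d_n(x,x_m)$ could itself be large at bad steps; I would instead argue directly that the full-density-of-good-$x$ statement, combined with the fact that discontinuities of $f^n_\omega$ are isolated, gives the conclusion for all $x, y$ simultaneously, or alternatively apply the Borel--Cantelli bound uniformly by covering $[0,1)$ with finitely many small intervals at each scale.

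The main obstacle is exactly this last upgrade from almost-every initial point to every initial point: the naive Fubini/triangle-inequality approach fails at the bad steps, so one must either (a) run the Borel--Cantelli estimate against a deterministic net of initial points whose mesh shrinks fast enough that any $x$ is always close to a net point that has not yet had a bad step, controlling the error by the (large but finitely many) discontinuities of $f^n_\omega$; or (b) exploit that the number of discontinuities of $f^n_\omega$ grows only like $N^{(\text{number of }0\text{'s among }\omega_0,\ldots,\omega_{n-1})} \le N^n$, which is dominated by the contraction $M^{-(n - \#0\text{'s})}$ precisely when $L_{p_0} < 0$, so that the total length of intervals on which $f^n_\omega$ is "dangerous" (within $\delta_n$ of a discontinuity, in the domain) is summable and a single Borel--Cantelli application on the domain side handles all $x$ at once. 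I expect approach (b) to be the cleanest, and the bookkeeping of counting discontinuities of $f^n_\omega$ versus the contraction rate to be where the hypothesis $L_{p_0} < 0$ is genuinely used.
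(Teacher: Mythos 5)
Your high-level strategy matches the paper's: a Borel--Cantelli estimate to show that orbit points rarely visit shrinking neighborhoods of the discontinuity set of $f_0$, together with the almost-sure exponential decay $(f^n_\omega)' \le C_\omega \zeta^n$ (for $e^{L_{p_0}} < \zeta < 1$) coming from the strong law of large numbers. The paper uses neighborhoods $B(1/n^2)$ and the sets $E_n = \Sigma \times B(1/n^2)$, exploiting $F$-invariance of $\mu = \nu \times \lambda$ exactly as you do with $\delta_n$. Up to this point your proposal and the paper's proof are essentially the same.

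The genuine gap is the step you yourself flag as the main obstacle. Your Borel--Cantelli argument delivers finitely many bad steps for $\nu\times\lambda$-almost every $(\omega,x)$; this is ``for a.e.~$x$,'' not ``for all $x$,'' and the theorem demands the conclusion for \emph{every} pair $(x,y)$. Neither of your proposed fixes closes this. Approach (a) runs into exactly the problem you name: even when $x$ is close to a good net point $x_m$, a discontinuity can fall between $f^n_\omega(x)$ and $f^n_\omega(x_m)$ at a bad step and separate them, so the triangle-inequality patch fails. Approach (b) cannot work as stated because ``summability of the lengths of dangerous intervals plus Borel--Cantelli'' again only yields a full-measure set of good $x$, never all $x$; counting discontinuities of $f^n_\omega$ against the contraction rate does not change the conclusion of Borel--Cantelli from almost everywhere to everywhere. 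The paper resolves this differently: rather than tracking the distance $d_n$ between two fixed points, it tracks the image of an interval $J_\omega$ around a single good point $x$, shows that $f^n_\omega(J_\omega)$ stays an interval of shrinking length for $\omega$ in a positive-measure set, then \emph{prepends} a word $\eta \in \{1,\ldots,M\}^t$ (built purely from the contracting maps) that maps all of $[0,1)$ into $J_\omega$, so that $\lambda(f^n_{\eta\omega}([0,1))) \to 0$ for all $\omega$ in that positive-measure set. This gives the conclusion simultaneously for all $y,z$ on a positive-measure set $\Psi$ of symbol sequences, and a Lebesgue density point argument (shift a density point of the putative bad set forward) then upgrades $\nu(\Psi) > 0$ to full measure. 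You would need this interval-shrinking plus word-prepending device, or an argument of comparable strength, to actually cover every initial pair $(x,y)$; the two-point bookkeeping you propose does not by itself get there.
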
	
	
\begin{proof}
Let $\zeta$ be a number with $e^{L_{p_0}} < \zeta < 1$. With $a_i = \ln (f_i')$, we can write 
\[ (f_\omega^n)' = e^{\sum_{i=0}^{n-1} a_{\omega_i}}.\]
Recall from \eqref{e:Lp0}  that
\[ \lim_{n\to\infty} \frac{1}{n} \sum_{i=0}^{n-1} a_{\omega_i} = L_{p_0} < \ln(\zeta) < 0,\] 
for $\nu$-almost all $\omega$. So, for $\nu$-almost all $\omega$,  $\sum_{i=0}^{n-1} a_{\omega_i}  -  n \ln(\zeta)$ converges to $-\infty$ and therefore $e^{\sum_{i=0}^{n-1} a_{\omega_i} } / \zeta^n$ goes to zero as $n\to\infty$. We find that for $\nu$-almost all $\omega$,
\[  \max  \{  (f_\omega^n)' / \zeta^n, n\ge 0  \}\] 
exists. Hence, for $\nu$-almost all $\omega \in \Sigma$, there exists a $C_\omega >0$ so that 
\[ (f^n_\omega)' \le C_\omega \zeta^n.\]
For any $\tilde{\varepsilon} >0$ one can choose $C>1$ and a set $\Omega_C \subset \Sigma$ of measure $\nu(\Omega_C) > 1 - \tilde{\varepsilon}$, so that $(f^n_\omega)' \le C \zeta^n$ for all $\omega \in \Omega_C$. Let $n_1 = n_1 (\tilde \varepsilon)$ be such that $C \zeta^{n_1} < 1/n_1^2$.

\vskip .2cm
Write
\begin{equation}\label{q:nbh}
B (r) = \bigcup_{i=1}^{N-1}  \left[\frac{i}{N} - r , \frac{i}{N} + r\right]
\end{equation}
for the $r$-neighborhood of the set $\mathcal{C} = \{1/N,\ldots,(N-1)/N\}$ of discontinuity points of $f_0$.
Let $E_n = \Sigma \times B(1/n^2)$. By the $F$-invariance of $\mu$ it holds that $\mu(F^{-n}(E_n)) = \mu(E_n) = 1/n^2$, so by the Borel-Cantelli lemma, we get for $\mu$-almost all $(\omega,x) \in \Sigma \times [0,1)$ that $F^n (\omega,x) \in E_n$ for at most finitely many n. Hence, the set of points
\[ B = \{ (\omega,x) \in \Sigma \times [0,1) \, ; \, \exists \, n_0 = n_0(\omega,x) \, \text{ s.t. } f^n_\omega (x) \not \in  B\left( 1/n^2 \right) \, \text{ for all } n \ge n_0 \}\]
satisfies $\mu(B)=1$.

\vskip .2cm
Write
\[ \mathcal C^* = \bigcup_{n \ge 0} \bigcup_{\stackrel{i_1\cdots i_n \in} {\{0,1, \ldots, M\}^n }} f_{i_1 \ldots i_n}^{-1} (\mathcal C)\]
for the set of points in $[0,1)$ that are eventually mapped to $\mathcal C$ by some $\omega \in \Sigma$. As $\mathcal C^*$ is a countable set,
\[ \mu \big((\Omega_C \times [0,1) \setminus \mathcal C^*) \cap B\big) > 1-\tilde \varepsilon,\]
which means that we can find an $x \in [0,1) \setminus \mathcal C^*$, such that
\begin{equation}\label{q:setlarger0}
\nu \big(\{ \omega \in \Sigma \, ; \, (\omega,x) \in (\Omega_C \times [0,1) \setminus \mathcal C^*) \cap B \} \big)>0.
\end{equation} 
Fix such a point $x$. Then for any $\omega$ in the set from \eqref{q:setlarger0} there is, by continuity, an open interval $J_\omega$ with $x \in J_\omega$, such that
\[ f_\omega^n (J_\omega) \cap \mathcal C = \emptyset, \quad \text{ for all } n < \max \{n_0,n_1 \}.\]
By the choice of $(\omega,x)$ we get for all $n \ge \max\{ n_0,n_1\} $ that $f_\omega^n (x) \not \in B(1/n^2)$ and $(f_\omega^n)' \le C \zeta^n < 1/n^2$. Hence, we recursively obtain that for all $n  \ge \max\{ n_0,n_1\}$ the set $f_\omega^n (J_\omega)$ is an interval and
\[ \lambda (f_\omega^n (J_\omega)) \le \frac{\lambda(f_\omega^{n-1} (J_\omega))}{ n^2} \le \frac1{n^2},\]
so that $f_\omega^n (J_\omega) \cap \mathcal C = \emptyset$.
Moreover, for every $\omega$ in the set from \eqref{q:setlarger0} there is an $n \ge 1$, such that
\[ \Big( x-\frac1n, x+\frac1n \Big) \subseteq J_\omega.\]
So we can find an $n_2 \ge 1$ such that the set
\[ \hat \Omega:= \Big\{ \omega \in \Sigma\, ; \,  (\omega,x) \in (\Omega_C \times [0,1) \setminus \mathcal C^*) \cap B \, \text{ and } \, \Big( x-\frac1{n_2}, x+\frac1{n_2} \Big) \subseteq J_\omega \Big\} \]
satisfies $\nu(\hat \Omega)>0$.

\vskip .2cm
For each $t \ge 1$ and $\eta \in \{1,\ldots,M\}^t$, the set $f^t_\eta ([0,1))$ is an interval of length $1/M^t$. The union of these intervals, varying over all $\eta \in  \{1,\ldots,M\}^t$ for fixed $t$, covers $[0,1)$. Hence, there exist $t \in \mathbb{N}$ and $\eta \in \{1,\ldots,M\}^t$ with $f^t_\eta ([0,1)) \subset \big( x-\frac1{n_2}, x+\frac1{n_2} \big) $. %Write
Then for each concatenated sequence $\tilde{\omega} = \eta\omega$, with $\omega \in \hat \Omega$, and each $n \ge 1$ the image $f^n_{\tilde{\omega}} ([0,1))$ is an interval with $\lim_{n\to\infty} \lambda (  f^n_{\tilde{\omega}} ([0,1)) ) = 0$. Hence, we have found a set $\Psi = \eta \hat \Omega \subset \Sigma$ with $\nu(\Psi)>0$, so that for any $y,z \in [0,1)$,
\[ \lim_{n \to \infty} |f_{\tilde \omega}^n(y)-f_{\tilde \omega}^n(z)|=0.\]

\vskip .2cm
Assume that there is a set $\Xi \subseteq \Sigma$ with $\nu(\Xi)>0$ of $\omega$ for which $f_\omega^n([0,1))$ is not contained in an interval of length shrinking to 0. We will derive a contradiction from this. By the Lebesgue density theorem we can take a density point $\xi$ of $\Xi$, meaning
\[
\lim_{j\to \infty}  \frac{\nu ( [\xi_1  \cdots \xi_j] \cap \Xi  )} {\nu(  [\xi_1 \cdots \xi_j] ) } = 1.
\]
(The Lebesgue density theorem is formulated for Lebesgue measure on the interval, but transfers to Bernoulli measure on $\Sigma$, compare \eqref{e:L}.) Then
\[ \lim_{j \to \infty} \nu (\sigma^j ( [\xi_1  \cdots \xi_j] \cap \Xi  ))=1\]
and moreover,
\[ \sigma^j ( [\xi_1  \cdots \xi_j] \cap \Xi  ) \subset \Xi.\]
This contradicts the construction of the set $\Psi$ with $\nu(\Psi)>0$, since $\Psi \cap \Xi = \emptyset$.
\end{proof}	
	
Next we set out to prove that the product measure $\mu = \nu \times \lambda$ is ergodic for $F$. To do so we use the system $\Gamma$ from Section~\ref{s:generalbaker}. The proof relies on the statements on the dynamics in Theorem~\ref{t:M>Nconvergence}.
	
\begin{theorem}\label{t:M>Nergodic}
Consider $L_{p_0} <0$. The measure $\mu$ is an ergodic invariant measure for $F$.
\end{theorem}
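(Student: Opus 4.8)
The plan is to exploit the isomorphism between $F$ and the piecewise affine map $\Gamma$ (via Lemma~\ref{p:factor}) together with the synchronization established in Theorem~\ref{t:M>Nconvergence}. Since ergodicity passes to factors, it suffices to prove that Lebesgue measure $\lambda$ on $[0,1)^3$ is ergodic for $\Gamma$; this is cleaner because $\Gamma$ is an invertible piecewise affine map and its inverse has explicit expanding/contracting structure. First I would record that $\Gamma$ expands the $w$-coordinate (it is conjugate, via $h$, to $L$, hence to the shift $\sigma$ with Bernoulli measure), so the sub-$\sigma$-algebra of sets depending only on $w$ carries the ergodic shift dynamics; in particular any $\Gamma$-invariant function that factors through $w$ is constant $\lambda$-a.e.

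The core of the argument is a standard Hopf-type / Fubini argument adapted to the skew structure. Let $\phi$ be a bounded $\Gamma$-invariant measurable function; I want to show $\phi$ is a.e.\ constant. The key point is that on the $(x,y)$-fibres above a fixed $w$, the relevant contraction is in the $y$-direction for the branches $i\ge1$ and — after using that under $F$ orbits of distinct $x$-values synchronize (Theorem~\ref{t:M>Nconvergence}) — one gets that $\phi$ cannot genuinely depend on the $x$-coordinate. Concretely: because $F$ is a factor of $\Gamma$ and $\mu=\nu\times\lambda$ is $F$-invariant, a $\Gamma$-invariant $\phi$ descends (after integrating out $y$, or by a martingale/conditional-expectation argument on the increasing fibre partitions) to an $F$-invariant function $\tilde\phi(\omega,x)$. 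Theorem~\ref{t:M>Nconvergence} says that for $\nu$-a.e.\ $\omega$ the maps $f^n_\omega$ collapse all of $[0,1)$ to a point, so for a.e.\ $\omega$ the value of any $F$-invariant $L^2$ function is independent of $x$: indeed $\tilde\phi(\sigma^n\omega, f^n_\omega(x))=\tilde\phi(\omega,x)$, and since $f^n_\omega(x)$ and $f^n_\omega(x')$ become arbitrarily close while $\tilde\phi$ restricted to a full-measure set is approximately continuous (Lusin), one forces $\tilde\phi(\omega,x)=\tilde\phi(\omega,x')$ for a.e.\ $x,x'$. Hence $\tilde\phi$ depends on $\omega$ alone, and then ergodicity of $\nu$ for $\sigma$ gives that $\tilde\phi$ is constant.

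To make the ``approximately continuous'' step rigorous I would proceed as in the proof of Theorem~\ref{t:M>Nconvergence}: fix a Lusin set on which $\tilde\phi(\omega,\cdot)$ is continuous for $\omega$ in a positive-measure set, use the interval-collapsing statement $\lambda(f^n_{\tilde\omega}([0,1)))\to 0$ established there for concatenated codes $\tilde\omega=\eta\omega$, and conclude that $\tilde\phi$ takes a single value on a positive-measure set of $\omega$'s, independently of $x$; then a density-point argument in $\Sigma$ (exactly the one used at the end of that proof, transferring the Lebesgue density theorem to the Bernoulli measure) spreads this to a.e.\ $\omega$. Finally, a $\Gamma$-invariant function that factors through $F$ and is $x$-independent is $\sigma$-invariant on $\Sigma$, hence $\nu$-a.e.\ constant by ergodicity of the Bernoulli measure, and pulling back gives $\lambda$-a.e.\ constancy of $\phi$, i.e.\ ergodicity of $\mu$.

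The main obstacle is the passage from the $\Gamma$-invariant function on $[0,1)^3$ to a genuine $F$-invariant function on $\Sigma\times[0,1)$ to which Theorem~\ref{t:M>Nconvergence} applies: one must control the extra $y$-coordinate of $\Gamma$, which is contracted under $\Gamma$ (expanded under $\Gamma^{-1}$), so $\phi$ could a priori depend on $y$ in a nontrivial way. The resolution is that $y$ is the ``reversed-time'' analogue of $x$, so invariance under $\Gamma^{-1}$ together with the same synchronization mechanism (now in backward time, using that $\Gamma^{-1}$ again has one expanding and the appropriate contracting directions) removes the $y$-dependence as well; alternatively one integrates $y$ out at the start, noting that $\pi_{w,x}\circ\Gamma = F\circ\pi_{w,x}$ already identifies $\phi$ with a function of $(w,x)$ up to the fibre measure, which is exactly what Lemma~\ref{p:factor} provides.
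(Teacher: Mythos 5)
Your high-level plan---pass to the invertible map $\Gamma$ from \eqref{e:Gamma}, exploit the expanding/contracting structure via a Hopf-type argument, invoke the synchronization from Theorem~\ref{t:M>Nconvergence}, and then use Lemma~\ref{p:factor} to transfer ergodicity to $F$---is exactly the paper's strategy. The difficulty is in the middle of your argument, where you try to descend a $\Gamma$-invariant function $\phi$ on $[0,1)^3$ to an $F$-invariant function $\tilde\phi(\omega,x)$ by ``integrating out $y$'' or taking a conditional expectation on the $(w,x)$-algebra. This step does not work as stated. The $y$-fibre maps of $\Gamma$ are affine contractions of $[0,1)$ into proper subintervals (with slopes $p_0/N$ or $1-p_0$); they are not measure-preserving bijections of the fibre, so $E[\phi \mid w,x]$ need not be $G$-invariant even when $\phi$ is $\Gamma$-invariant, and Lemma~\ref{p:factor} does not ``identify $\phi$ with a function of $(w,x)$.'' More fundamentally, ergodicity of $\Gamma$ does not reduce to ergodicity of its factor $G$: a $\Gamma$-invariant function may genuinely depend on $y$ (it is constant along $y$-fibres only in the forward limit, not pointwise), so you cannot get away with removing the $y$-dependence before you run the Hopf argument.

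The fix---and what the paper actually does---is to skip the descent entirely and run the Hopf argument directly on $\Gamma$. For a continuous $\varphi$ on $[0,1)^3$, form the forward and backward Birkhoff averages $\varphi^{\pm}$. Theorem~\ref{t:M>Nconvergence} for the $x$-direction, combined with the fact that $\Gamma$ contracts the $y$-direction unconditionally, gives $|\Gamma^n(u)-\Gamma^n(v)|\to 0$ for $v\in W^s(u)=\{u+(0,x,y)\}$, so $\varphi^+$ is a.e.\ constant on these two-dimensional fibres. Dually, $\Gamma^{-1}$ contracts the $w$-direction, so $\varphi^-$ is a.e.\ constant on the one-dimensional fibres $W^u(u)=\{u+(w,0,0)\}$. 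Since $\varphi^+=\varphi^-$ a.e.\ by Birkhoff, a Fubini argument (\cite[Lemma~4.2.17]{MR3558990}) chains a stable move, an unstable move, and a stable move to conclude $\varphi^+=\varphi^-$ is a.e.\ constant. Ergodicity of $\Gamma$, hence of its factor $F$, follows. Your ``reversed-time'' remark at the end gestures at this, but you should build the proof around the two foliations and the Hopf/Fubini argument rather than around a reduction to an $F$-invariant function, because that reduction is where the gap lies.
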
		
	
\begin{proof}
Ergodicity of Lebesgue measure for $\Gamma$ will be established by exploiting invertibility of $\Gamma$ and using a Hopf argument as in \cite[Section~4.2.6]{MR3558990}. From Proposition~\ref{p:factor} it then follows that $\mu$ is ergodic for $F$. 

\vskip .2cm
Let $\varphi$ be a continuous function on $[0,1)^3$ and consider the time averages
\begin{align*}
	\varphi^+ (u) &= \lim_{n\to\infty} \frac{1}{n} \sum_{i=0}^{n-1}  \varphi (\Gamma^i (u)),
	\\
	\varphi^- (u) &= \lim_{n\to\infty} \frac{1}{n} \sum_{i=0}^{n-1}  \varphi (\Gamma^{-i} (u)).
\end{align*}	
As Lebesgue measure is invariant,
we find that there is a set $V \subset [0,1)^3$ of full Lebesgue measure,
so that for $u \in V$, 
the two limits exist and are equal (see \cite[Section~3.2.3]{MR3558990}):
\[
\varphi^+(u) = \varphi^- (u), \; \textrm{ for Lebesgue almost all } u.
\] 
For $u \in [0,1)^3$, write 
\[ W^s (u) = \{ v \in [0,1)^3 \, ; \, v = u + (0,x,y) \textrm{ for some } x,y\}\]
and
\[ W^u (u) = \{ v \in [0,1)^3 \, ; \, v = u + (w,0,0) \textrm{ for some } w\}.\]
Using Theorem~\ref{t:M>Nconvergence} we get that for $u$ in a set of full 
Lebesgue measure, if $v \in W^s(u)$ then $|\Gamma^n (u) - \Gamma^n(v)| \to 0$ as $n\to\infty$ and therefore $\varphi^+ (u) = \varphi^+(v)$.
Likewise for $u$ in a set of full 
Lebesgue measure, if $v \in W^u(u)$ then $|\Gamma^{-n} (u) - \Gamma^{-n}(v)| \to 0$ as $n\to\infty$ and therefore $\varphi^- (u) = \varphi^-(v)$.
We conclude that 
for $u$ in a set $U \subset V$ of full 
Lebesgue measure,
$\varphi^+ $ is constant along  $W^s (u)$ and $\varphi^- $ is constant along  $W^u (u)$.
 
 \vskip .2cm
As in \cite[Lemma~4.2.17]{MR3558990}, using Fubini's theorem one sees that there is a set $Y$ of full Lebesgue measure in $[0,1)^3$ so that for given $u,v \in Y$ there are $u',v' \in Y\cap U$ with $u' \in W^s (u)$, $ v' \in W^s (v)$ and moreover $v' \in W^u(u')$. It follows that for such points $u,v \in Y \cap U$, 
\[ \varphi^- (u) = \varphi^+ (u) =  \varphi^+ (u') = \varphi^- (u') =\varphi^- (v') = \varphi^+ (v') = \varphi^+ (v) = \varphi^- (v). \]
Hence, $\varphi^+$ and $\varphi^-$ exist and are constant on a set of full Lebesgue measure.  
\end{proof}		

As a corollary, typical orbits of the iterated function system $\{ f_i \, ; \, 0 \le i \le M \}$ are uniformly distributed on $[0,1)$. This is 
made explicit in the following result. Let $A \subset [0,1)$ and write $\chi_A$ for its characteristic function:
\[ \chi_A (x) = \begin{cases} 
0, & x \not\in A, \\
1, & x \in A.
\end{cases}\]

\begin{proposition}\label{p:uniformlylambda<0}
Consider $L_{p_0} <0$. For a Borel set $A$ with $\lambda(A)>0$,
\[ \lim_{n\to\infty} \frac{1}{n}  \sum_{i=0}^{n-1} \chi_A (f^i_{\omega} (x)) = \lambda(A)\]
for $\mu$-almost all $(\omega,x) \in \Sigma \times [0,1)$.
\end{proposition}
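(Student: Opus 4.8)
The plan is to derive this from the ergodicity of $\mu = \nu \times \lambda$ for $F$ established in Theorem~\ref{t:M>Nergodic}, via the Birkhoff ergodic theorem. First I would observe that for a Borel set $A \subset [0,1)$, the function $\Phi: \Sigma \times [0,1) \to \mathbb R$ defined by $\Phi(\omega, x) = \chi_A(x) = \chi_{\Sigma \times A}(\omega,x)$ is in $L^1(\mu)$, with $\int \Phi \, d\mu = \nu(\Sigma)\lambda(A) = \lambda(A)$. Since $F^i(\omega,x) = (\sigma^i \omega, f^i_\omega(x))$, the Birkhoff sums of $\Phi$ along $F$ are exactly
\[ \frac1n \sum_{i=0}^{n-1} \Phi(F^i(\omega,x)) = \frac1n \sum_{i=0}^{n-1} \chi_A(f^i_\omega(x)). \]
By the Birkhoff ergodic theorem applied to the ergodic measure-preserving system $(F,\mu)$, this converges for $\mu$-almost all $(\omega,x)$ to the space average $\int \Phi \, d\mu = \lambda(A)$, which is precisely the claimed statement.

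So the proof is essentially a one-line invocation once the ingredients are in place; the only thing to be careful about is that $\Sigma \times [0,1)$ is not compact, whereas Theorem~\ref{t:M>Nergodic} and Proposition~\ref{p:factor} are phrased on the compact spaces $\Sigma \times [0,1]$ and $[0,1]^3$. This is a harmless technicality: adding the single point $\{1\}$ (and the corresponding boundary faces) changes nothing measure-theoretically, since $\nu \times \lambda$ gives the added sets zero measure and the dynamics on $[0,1)$ and $[0,1]$ agree off a Lebesgue-null set. Alternatively, one can simply note that ergodicity of $\mu$ for $F: \Sigma \times [0,1) \to \Sigma \times [0,1)$ is what Theorem~\ref{t:M>Nergodic} really establishes, and phrase everything on the half-open interval throughout.

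I do not anticipate a genuine obstacle here; the substantive work has already been done in Theorem~\ref{t:M>Nconvergence} (the synchronization statement) and Theorem~\ref{t:M>Nergodic} (ergodicity via the Hopf argument on the invertible extension $\Gamma$). If one wanted to avoid even citing Birkhoff with a non-continuous observable, one could approximate $\chi_A$ from above and below by continuous functions in $L^1(\mu)$ using regularity of $\mu$, but this refinement is not needed since Birkhoff's theorem applies verbatim to any $L^1$ function. I would therefore present the proof as: the system $(F, \mu)$ is ergodic by Theorem~\ref{t:M>Nergodic}; apply Birkhoff's ergodic theorem to $\chi_{\Sigma \times A} \in L^1(\mu)$; identify the Birkhoff average with $\frac1n \sum_{i=0}^{n-1}\chi_A(f^i_\omega(x))$ and the space average with $\lambda(A)$.
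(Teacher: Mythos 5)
Your proposal is correct and follows exactly the paper's own argument: the paper likewise cites Theorem~\ref{t:M>Nergodic} for ergodicity of $\mu$ and then applies Birkhoff's ergodic theorem to the integrable function $\chi_A \circ \pi_2$ (equivalently, $\chi_{\Sigma\times A}$). The additional remarks about compactness and approximation by continuous functions are not needed, as you correctly observe, but they do not detract from the argument.
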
	

\begin{proof}
The measure $\mu$ is ergodic for $F$ by Theorem~\ref{t:M>Nergodic}. The proposition therefore follows from an application of Birkhoff's ergodic theorem to the integrable function $\chi_A \circ \pi_2$, where $\pi_2: \Sigma \times [0,1) \to [0,1)$ is the canonical projection on the second coordinate of $F$. 	
\end{proof}

\subsection{$L_{p_0} = 0$ 
	(Intermittency)  }\label{s:2point=0}
In the case where expansion and contraction balance each other, a phenomenon reminiscent of intermittency arises.
	\begin{figure}[!ht]
	\begin{center}
		\includegraphics[height=3.5cm]{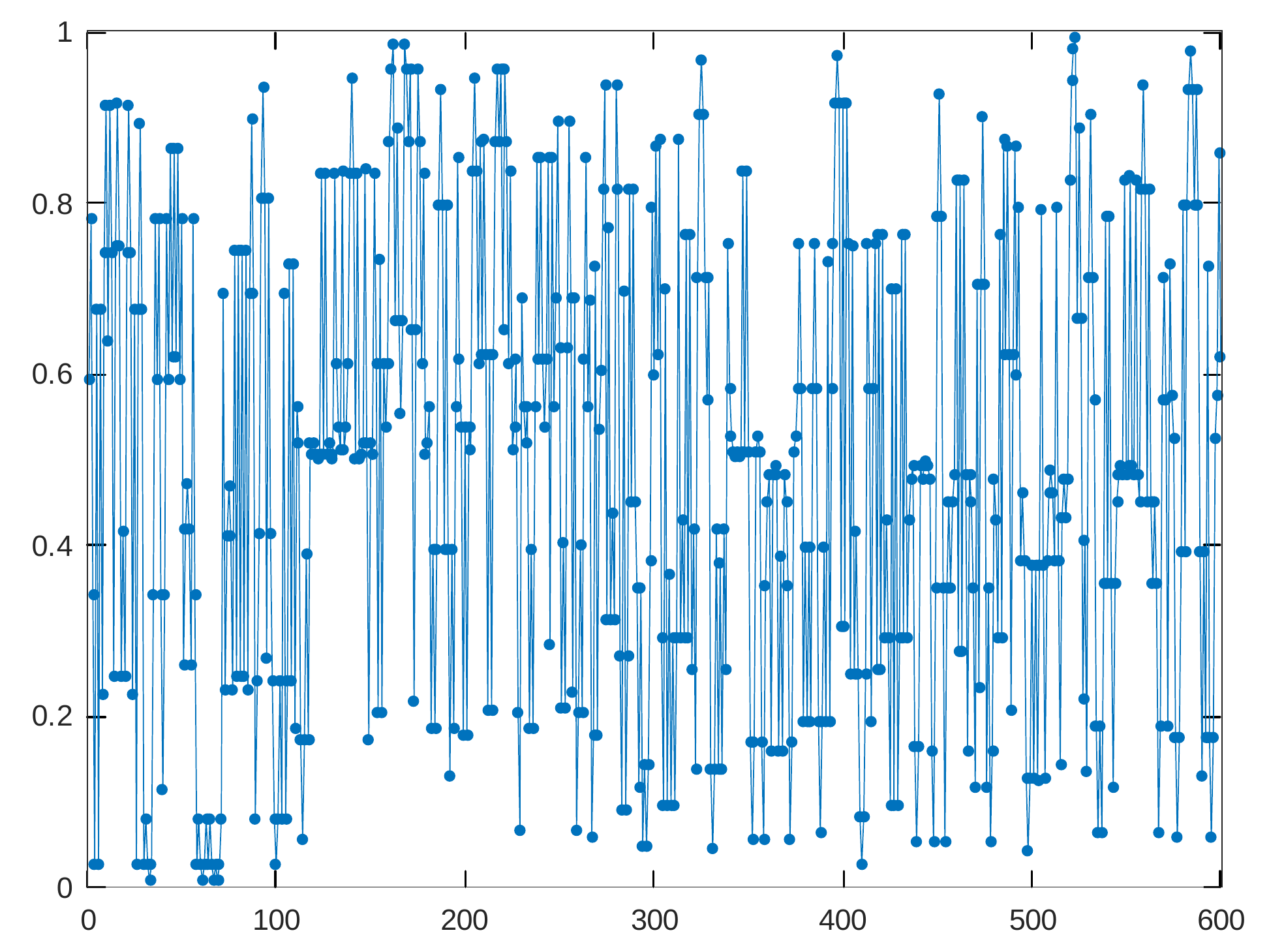}
		\hspace{1cm}
		\includegraphics[height=3.5cm]{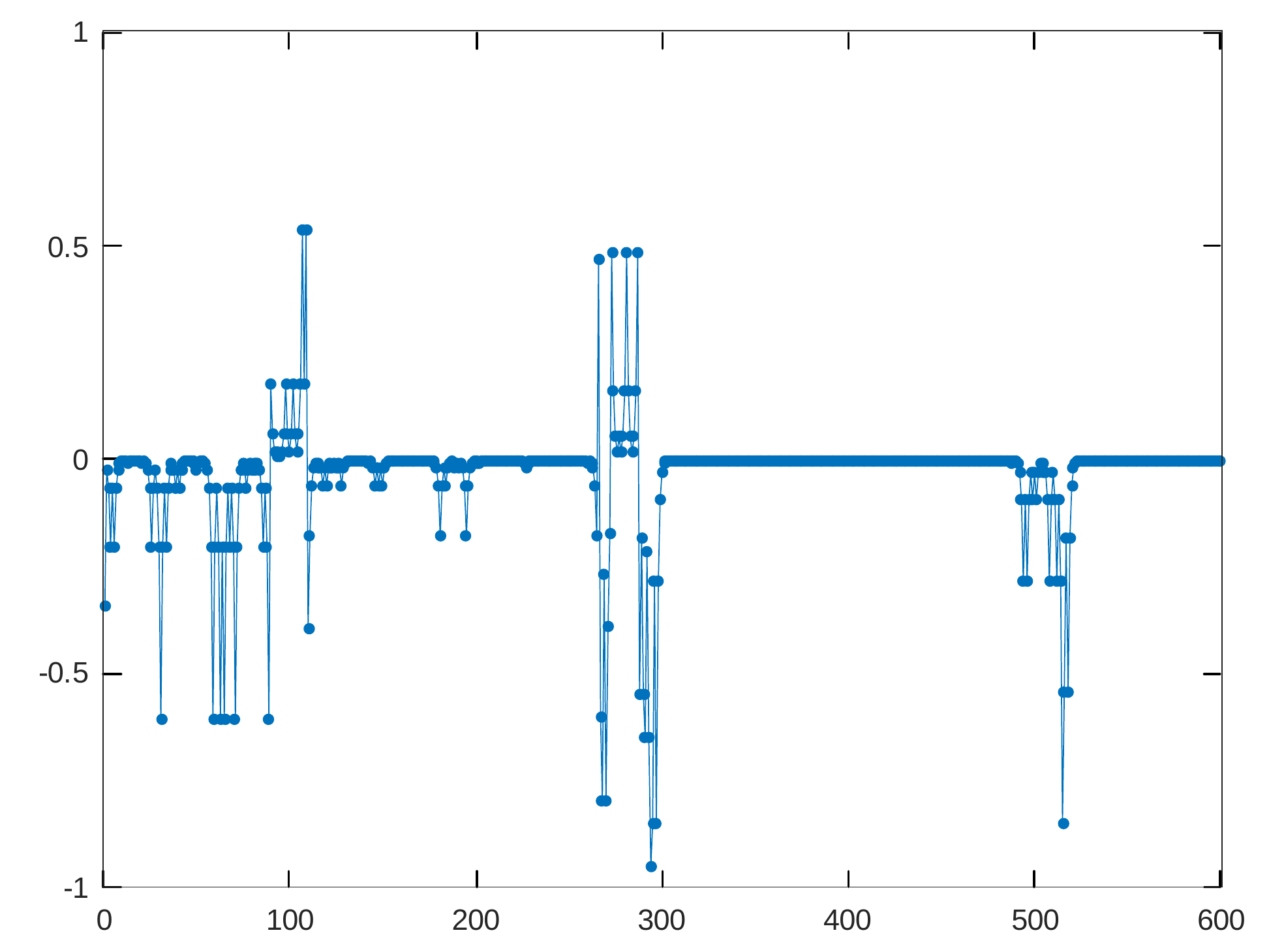}
		\caption{\label{f:intermittent} A time series of $f^n_\omega (x)$ for $(M,N)=(3,3)$. The right panel shows the signed difference with another time series with the same $\omega$.
		}
	\end{center}
\end{figure}
In Figure~\ref{f:intermittent} the panel on the right shows signed distances between two orbits with different starting points but identical $\omega$. One sees that the orbits are mostly close together with occasional bursts where the orbits diverge. We make this statement quantitative and provide proofs below. The reader is invited to compare the results with  \cite{MR3600645} on iterated functions systems of interval diffeomorphisms. The novelty in the setting here is the use of an expanding map, which enables having Lebesgue measure as stationary measure and a uniform distribution of orbit points.

\begin{theorem}\label{t:Lp=0frequency}
Consider $L_{p_0} = 0$. For every $\varepsilon>0$, for all $x,y \in [0,1)$,
\[
\lim_{n\to\infty} \frac{1}{n}	\left| \{ 0\le i < n \, ; \,  |f^i_\omega (x) - f^i_\omega (y) | < \varepsilon \}\right| =1 
\]
for $\nu$-almost all $\omega\in \Sigma$.
\end{theorem}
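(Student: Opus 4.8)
The plan is to track the distance $d_n := |f^n_\omega(x) - f^n_\omega(y)|$ as a process driven by the symbol sequence $\omega$, and to show that although the logarithmic drift of $d_n$ is zero (since $L_{p_0}=0$), the presence of the expanding map $f_0$ forces the distance to be small at a set of iterates of full density. The key observation is that $d_{n+1} = \tfrac1M d_n$ whenever $\omega_n \in \{1,\ldots,M\}$, while applying $f_0$ multiplies the distance by $N$ \emph{provided} the two points land in the same branch of $f_0$; if they land in different branches the distance can drop to something small instead. So the ``bad'' direction (growth) is conditional and self-limiting: once $d_n \geq 1/N$ the next application of $f_0$ is very likely to split the points across a discontinuity and collapse the distance.

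First I would reduce to the associated random walk. Consider the quantity $S_n = \sum_{i=0}^{n-1} \ln f_{\omega_i}' = (\ln N)\cdot\#\{i<n:\omega_i=0\} - (\ln M)\cdot\#\{i<n:\omega_i\neq 0\}$, which is an honest mean-zero random walk on $\mathbb{R}$ (with i.i.d.\ increments of known distribution) by the definition of $\mathbf p$ and $L_{p_0}=0$. For points $x,y$ whose orbit never meets a discontinuity of any $f^n_\omega$, one has $d_n = e^{S_n} d_0$ exactly as long as $e^{S_n}d_0 < 1/N$ or so; more carefully, $d_n$ is comparable to $e^{S_n}$ but is forced back down each time $e^{S_n}$ would exceed order $1$. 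I would make this precise by a stopping-time / renewal argument: define times at which $d_n$ exceeds a fixed threshold, show that between consecutive such times the excursion is controlled by the excursion of the recurrent random walk $S_n$ away from its running minimum, and invoke the fact that a mean-zero random walk spends a density-one proportion of time within any fixed window $[-C\ln(1/\varepsilon), 0]$ of its running maximum. This is exactly the ``random walks with time-dependent stopping criteria'' machinery the introduction says is developed in Appendix~\ref{s:A}, so I would quote the relevant lemma from there.

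The remaining issue is the discontinuity points of $f_0$, i.e.\ the (generally rare) iterates at which the two orbit points straddle some $i/N$ and get mapped far apart. I would handle this exactly as in the proof of Theorem~\ref{t:M>Nconvergence}: using $F$-invariance of $\mu = \nu\times\lambda$ and a Borel--Cantelli argument with the sets $E_n = \Sigma \times B(1/n^2)$ (notation from \eqref{q:nbh}), for $\mu$-almost every $(\omega,x)$ the orbit $f^n_\omega(x)$ enters the $1/n^2$-neighborhood of $\mathcal C = \{1/N,\ldots,(N-1)/N\}$ only finitely often. Combined with a density-point / countable-exceptional-set argument (again as in Theorem~\ref{t:M>Nconvergence}, exploiting that $\mathcal C^*$ is countable and that the conclusion is a closed condition one can transfer from a positive-measure set of $\omega$ via the Lebesgue density theorem on $\Sigma$) this upgrades the statement from ``$\mu$-a.e.\ $(\omega,x,y)$'' to ``all $x,y$ and $\nu$-a.e.\ $\omega$.'' The main obstacle I anticipate is the bookkeeping that ties the geometric process $d_n$ to the additive random walk $S_n$ while \emph{simultaneously} accounting for the branch-splitting collapses: one must show these collapses only help (they can only decrease $d_n$, never increase it beyond $1/M$ times the pre-collapse value) and do not destroy the density-one conclusion — which is why the zero-drift recurrence of $S_n$, rather than any strict negative drift, is exactly what is needed. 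The multiplicative (in)dependence of $M$ and $N$ affects only the fine structure of where the collapses land (cf.\ Lemma~\ref{l:feta}) and not the density-one conclusion, so I would keep the argument uniform in $(M,N)$.
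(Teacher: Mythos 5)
Your high-level instinct (reduce to the recurrent random walk $S_n=\sum\ln f'_{\omega_i}$, handle the $f_0$-discontinuities separately, use the stopping-time machinery from the appendix) points in the right direction, but three concrete steps in your sketch are wrong or will not go through.

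First, the claim that the branch-splitting collapses ``only help (they can only decrease $d_n$, never increase it beyond $1/M$ times the pre-collapse value)'' is false and is in fact the precise difficulty the paper has to overcome. If $x<i/N<y$ with $y-x$ small, then $|f_0(y)-f_0(x)| = 1 - N(y-x)$, which is close to $1$. A collapse across a discontinuity can send two \emph{nearby} points almost as far apart as possible, not closer together; this is exactly why the paper describes the discontinuities as the obstacle. Your heuristic ``once $d_n\ge 1/N$ the next $f_0$ collapses the distance'' only addresses the easy case; the dangerous event is $d_n$ small \emph{and} straddling a discontinuity.

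Second, the random-walk fact you want to quote --- that a mean-zero random walk spends density-one time within a fixed window of its running maximum --- is not a true statement (the walk is typically of order $\sqrt{n}$ from its running extrema, cf.\ the arcsine law). The mechanism that actually works, and which Appendix~\ref{s:A} is built to deliver, is different: the paper defines an alternating sequence of stopping times $V$ (time until the image of the interval becomes smaller than $\varepsilon$, via the first occurrence of a fixed contracting word $\zeta_1\cdots\zeta_D$) and $W$ (time until the small image can no longer be guaranteed small), proves $\mathbb E[V]<\infty$ and $\mathbb E[W]=\infty$ (Lemma~\ref{l:stoppingtimeinfinite}), and concludes by the strong law of large numbers with infinite means that the ``small'' phases have density one. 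No statement about proximity to the running maximum is involved.

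Third, the Borel--Cantelli argument from Theorem~\ref{t:M>Nconvergence} does not transfer. There it works because $L_{p_0}<0$ forces $(f^n_\omega)'\to 0$, so the image stays small and eventually clears the shrinking neighborhoods $B(1/n^2)$ forever. With $L_{p_0}=0$ the derivative is recurrent and does not decay, so Borel--Cantelli gives no control on the stopping-time distribution that you need. The paper instead bakes the discontinuity avoidance directly into the stopping criterion $W$ by using a \emph{time-dependent} threshold $1/(p+n)^2$ rather than a fixed $\varepsilon$ (making $\sum 1/(p+n)^2$ finite), and then uses the multivalued maps $F^n_\eta$ together with Lemma~\ref{l:feta} to show that imposing the discontinuity-avoidance condition loses at most half the measure of each relevant cylinder (inequality~\eqref{e:1/2}), so that $W$ still has infinite expectation. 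That is the decisive idea your sketch is missing. Related to this, the paper tracks the image of the whole interval $J=f^D_\zeta([0,1))$ rather than a single pair distance $d_n$, which handles all $x,y$ at once and makes the bookkeeping with discontinuities manageable.
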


We  describe the strategy in words before giving the formal proof. As in the proof of Theorem~\ref{t:M>Nconvergence}, we will rely on a Borel-Cantelli type argument. The proof also uses statements on random walks on the line that are developed in Appendix~\ref{s:A}.

\vskip .2cm
Let $\varepsilon >0$ be small. The main strategy is to subdivide time into periods where we are sure that the distances $  |f^i_\omega (x) - f^i_\omega (y) |$ are small enough (smaller than $\varepsilon$) followed by periods where the distances might be too big. Then we proceed by showing that the latter periods take up only a negligible part of time. To do this we fix a word $\zeta_1\cdots\zeta_D \in \{1,\ldots,M\}^D$ with $D$ large enough so that $1/M^D < \varepsilon$. Now start with the entire interval $[0,1)$ and iterate under $f^n_\omega$ until an iterate $n_1 = n_1(\omega)$ with $n_1 > D$ and $\sigma^{n_1 - D + 1} \omega \in [\zeta_1\cdots\zeta_D]$. So the final $D$ symbols $\omega_{n_1-D+1}\cdots\omega_{n_1}$ equal $\zeta_1\cdots\zeta_D$.
Then $f^{n_1 + 1}_\omega ([0,1))$ is contained in the interval
\begin{align}\label{e:J}
	J &= f^D_\zeta ([0,1)),
\end{align}
which has length smaller than $\varepsilon$. Hence, we arrived at a time $n_1+1$ for which $ |f^{n_1+1}_\omega (x) - f^{n_1+1}_\omega (y) | < \varepsilon$. The expected stopping time, that is, the average value of $\omega \mapsto n_1(\omega)$, is finite.

Now we are interested in the number of iterates it takes until the image $f^n_\omega ([0,1))$, $n > n_1+1$, is no longer contained in an interval of size $\varepsilon$. As in the proof of Theorem~\ref{t:M>Nconvergence} the discontinuities of $f_0$ pose a difficulty here. To control possible intersections of  $f^n_\omega ([0,1)) \subset f^{n-(n_1+1)}_{\sigma^{n_1+1}\omega}(J)$ with the set $\mathcal{C}$ of critical points, we take a slightly different approach. Suppose $p$ is such that $1/ (p+1)^2 < \varepsilon < 1/p^2$. We iterate instead until the image $f^{m_1+1}_\omega ([0,1))$, $m_1 > n_1$, is no longer contained in an interval of size $1/(p + m_1 - n_1)^2$. 
Note that this criterion depends on the number of iterates $m_1$. 
This defines an iterate $m_1 = m_1(  \sigma^{n_1} \omega)$ that marks the end of a period of time where we are sure that the images $f^n_\omega ([0,1))$ are small enough. The expected stopping time, the average value of $m_1-n_1$, will be shown to be infinite.

Finally, we continue the procedure and obtain a sequence $0 = m_0  < n_1 < m_1 < n_2 < m_2 <  \cdots$ of stopping times.
Here
\begin{enumerate}
\item	
$f^{n_i+1}_\omega ([0,1))$ is contained in an interval of length $\varepsilon$. This is guaranteed by stopping after the appearance of a specific word  $\omega_{n_i-D+1}\cdots\omega_{n_i} = \zeta_1\cdots\zeta_D$, so that we find $f^{n_i+1}_\omega ([0,1)) \subset J$;

\item 
$f^{m_i+1 - n_i}_{\sigma^{n_i}\omega} (J)$ is not (more accurately, with the conditions we use it can no longer be guaranteed to be) contained in an interval of length 
$1 / (p + m_i-n_i)^2$  (which is smaller than $\varepsilon$). 
The use of interval lengths that are decreasing in $m_i-n_i$, instead of working with a fixed interval length $\varepsilon$, is done to control and be able to avoid intersections with critical points.  
\end{enumerate}	

The natural number $m_i$ is the first integer beyond $n_i$ for which this holds, and $n_i$ is the first integer beyond $m_{i-1} + D$  (we let $m_0=0$) with $\sigma^{n_i-D+1} \omega \in [\zeta_1\cdots\zeta_D]$.
As $f^{j - n_i}_{\sigma^{n_i}\omega} (J)$ contains  $f^{j}_\omega ([0,1))$, we have that
during iterates $n_i+1 \le j \le m_i$,  $f^j_\omega ([0,1))$ is contained
in an interval of length $\varepsilon$. Note that this does not mean that $f^j_\omega ([0,1))$ 
is itself an interval.
We find that the $n_i - m_{i-1}$ have finite expectation and the  $m_i - n_i$ have infinite expectation. 
This is combined to prove the occurrence of intermittency. 

\begin{proof}[Proof of Theorem~\ref{t:Lp=0frequency}]	
Fix an $\varepsilon >0$ and a word $\zeta_1\cdots\zeta_D \in \{1,\ldots,M\}^D$ with $D$ large enough so that $1/M^D < \varepsilon$. Let $J = f^D_\zeta ([0,1))$, then $\lambda(J) < \varepsilon$. Let $p\in \mathbb{N}$ satisfy $1/(p+1)^2 < \varepsilon < 1/p^2$ and let $x_J$ denote the midpoint of $J$. Recall the definition of $r$-neighborhoods $B(r)$ of the points of discontinuity $\mathcal{C}$ from \eqref{q:nbh}. We seek estimates for the stopping time
\begin{equation}\label{e:W}
W(\omega) =  \min \left\{ n>0 \; ; \; (f^n_\omega)' > \frac1{(p + n)^2\varepsilon}  \quad \textrm{or}\quad   f^n_\omega (x_J) \in B\Big(\frac1{(p + n)^2}\Big) \quad \text{with}\; \omega_{n} = 0  \right\}.
\end{equation}

To understand the conditions, note that if  $f^i_\omega (J) \cap \mathcal{C} = \emptyset$ for $0\le i < n$ and
$(f^n_\omega)' \le \frac1{(p + n)^2\varepsilon}$, then $f^n_\omega (J)$ is an interval with $\lambda( f^n_\omega (J) ) < \frac1{(p + n)^2}$.
Further, if for $0\le i < n$ we have $f^i_\omega (x_J) \not\in B\big(\frac1{(p+i)^2}\big)$ when $\omega_i=0$,  and $(f^i_\omega)' \le \frac1{(p + i)^2\varepsilon}$,
then  $f^i_\omega (J) \cap \mathcal{C} = \emptyset$ for $0\le i < n$.
Consequently, these conditions plus $(f^n_\omega)' \le \frac1{(p + n)^2\varepsilon}$ imply that $f^n_\omega (J)$ is an interval with $\lambda( f^n_\omega (J) ) < \frac1{(p + n)^2}$. 

Hence, $W$ is such that for all for $n < W(\omega)$ the set $f^n_\omega (J)$ is an interval with $\lambda( f^n_\omega (J) ) \le \frac1{(p + n)^2} < \varepsilon$.

\vskip .2cm
In the following analysis we first look at the derivatives of compositions, so at the first condition in \eqref{e:W}.
For this, consider the process for $d_n = (f^n_\omega)'$, given by $d_0=1$ and 
\[ d_{n+1} =  \begin{cases}
N d_n, &  \omega_{n+1} = 0,\\
d_n/M, & \omega_{n+1} \in \{1,\ldots,M\}. 	
\end{cases}
\]
For each $n \ge 0$, let $z_n = -\ln (d_n)$. For $z_n$ we obtain the random walk given by $z_0 = 0$ and
\[ z_{n+1} =  \begin{cases}
z_n - \ln(N), &  \omega_{n+1} = 0,\\
z_n  + \ln(M), & \omega_{n+1} \in \{1,\ldots,M\},
\end{cases}
\]
for $n \ge 0$. Recall that $L_{p_0} = 0$ means $ p_0 \ln (N) - (1-p_0) \ln (M) = 0$. The average step size for this random walk, equal to $-L_{p_0}$, is zero. The criterion $d_n > \frac1{(p + n)^2\varepsilon}$, which is the first condition appearing in the definition \eqref{e:W} of $W$,  is equivalent to $ z_n < -\ln\big(\frac1{(p + n)^2}\big) + \ln(\varepsilon)$. Therefore, we are interested in the stopping time
\[ 
W_1 (\omega) = \min \left\{ n>0 \, ; \, z_n < -\ln\Big(\frac1{(p + n)^2}\Big) + \ln(\varepsilon) \right\} = \min \left\{ n>0 \, ; \, (f^n_\omega)'   > \frac1{(p + n)^2\varepsilon}  \right\},
\]
which satisfies $W_1 \ge W$. By Lemma~\ref{l:stoppingtimeinfinite} in the appendix, the average of the stopping time $W_1$ is infinite:
\[
\int_{\Sigma}  W_1(\omega) \, d\nu(\omega) = \infty.
\]
If for each $u >0$ we set
\begin{equation}\label{e:1...1}
C_u = \{ \omega \in \Sigma \; ; \; \omega_i \in \{ 1,\ldots,M\} \text{ for } 0\le i < u\}, 
\end{equation}
then we also have $\int_{C_u}  W_1(\omega) \, d\nu(\omega) = \infty$ for any $u>0$.
This holds since the first $u$ iterates give contractions and the $\omega_i$'s are independent.

\vskip .2cm	
To study the second condition in the definition of $W$, write $\Sigma_2 = \{0,1\}^\mathbb N$ and let $\Pi : \Sigma \to \Sigma_2$ be as in Section~\ref{s:multivalued}, i.e., projecting all symbols $1,2, \ldots, M$ to 1. Consider iterates $F^n_\eta$ with $F_0$, $F_1$ as defined in \eqref{e:F0}, \eqref{e:F1}. With $\eta = \Pi (\omega)$ we have $(F^n_\eta)' = (f^n_\omega)'$. The calculated stopping time $W_1$ is thus identical for any symbol sequence in $\Pi^{-1} \{ \eta\}$ and we may write $W_1(\eta)$.

\vskip .2cm
Fix $\eta \in \Sigma_2$ and let $n = W_1(\eta)$. The multivalued map $F^i_\eta$ is built from affine graphs with slope $(F^i_\eta)'$, stacked in an equidistant fashion (see for example Figure~\ref{f:1010010}).
Let 
\[
\Xi_i =  \left\{ \omega \in \Pi^{-1} ([\eta_0 \cdots \eta_{n-1}]) \, ; \, f^i_\omega (x_J) \not \in B\Big(\frac1{(p + i)^2}\Big) \;\textrm{whenever}\; \omega_{i} = 0 \right\}
\]
and set
\[\Xi = \cap_{i=0}^{n-1} \Xi_i.\]
Since $n = W_1(\eta)$, the set $\Xi$ contains all sequences $\omega \in \Pi^{-1}([\eta_0 \cdots \eta_{n-1}])$ that satisfy both conditions from the definition of $W$ in \eqref{e:W} up to the stopping time $W_1$, so for which $W=W_1$. We next show that for certain $\eta$ this collection is large enough to conclude that $\int_\Sigma W(\omega) d\nu(\omega) = \infty$.

\vskip .2cm

From $(F^i_\eta)' = (f^i_\omega)'  \le \frac{1}{(p+i)^2 \varepsilon}$ for $0 \le i <n$, we get that there are at least $\lceil \varepsilon (p+i)^2 \rceil$ different graphs in $F^i_\eta$. Let $\xi_i$ be the number of points in $F^i_\eta (\{x_J\})$ lying in $B ({1/(p+i)^2})$
and write  $\psi_i = \xi_i /  \# F^i_\eta (\{x_J\})$ for the proportion of points from 
$F^i_\eta (\{x_J\})$ contained in $B ({1/(p+i)^2})$.
A closed  interval of length $b$ contains at most $\lceil b \, (\# F^i_\eta (\{x_J\}) +1) \rceil$ points of $F^i_\eta (\{x_J\})$.
It follows that there is a constant $K>0$, independent of $i$ and $\eta$, with
\[
\psi_i \le K / (p+i)^2.
\] 
We conclude that $\psi_i$ is summable. So there exists a $u >0$ with $\sum_{i=u}^\infty \psi_i < 1/2$. Here $u$ can be taken uniformly in $\eta$, as the above estimates are uniform in $\eta$.

\vskip .2cm

Now let $\eta \in [1^u]$ with $W_1(\eta)=n$. This implies that $W(\omega)>u$ for each $\omega \in \Pi^{-1}([\eta_0 , \ldots, \eta_{n-1}])$. From Lemma~\ref{l:feta} it follows that
\[
\psi_i =  \frac{\nu (\Pi^{-1}([\eta_0 \cdots \eta_{n-1}]) \setminus \Xi_i) }{ \nu (\Pi^{-1}([\eta_0 \cdots \eta_{n-1}])) },
\]
assuming $\omega_i=0$. Since $\eta_i =1$ for all $0 \le i < u$ and $\sum_{i=u}^\infty \psi_i < 1/2$ we then have
\begin{align}\label{e:1/2}
\frac{\nu (\Xi) }{ \nu (\Pi^{-1} ([\eta_0 \cdots\eta_{n-1}]) ) } &\ge 1/2.
\end{align}
Hence
\begin{align*}
	\int_{C_u} W(\omega)  \, d\nu(\omega) &\ge \int_{\{ \omega \in C_u \, ; \, W(\omega)=W_1(\omega) \}}  W(\omega)\, d\nu(\omega)
	\\
	&\ge  \sum_{\eta \in [1^u], W_1(\eta)=n , n \ge u}  n \,  \nu (\Pi^{-1} ([\eta_0 \cdots\eta_{n-1}]) )/2
	\\
	&= \infty.
\end{align*}
In the second estimate we used \eqref{e:1/2} which says that for each $\eta \in [1^u]$ with $S(\eta)=n$, at least half of  $\Pi^{-1} ([\eta_0 \cdots\eta_{n-1}])$ counts in the integral.
We conclude
\[
\int_\Sigma   W(\omega) \, d\nu(\omega) = \infty.
\]

\vskip .2cm	
Define the stopping time
\[
V(\omega) = \min \{ n \ge D-1 \; ; \; \sigma^{n-D+1} \omega \in [\zeta_1 \cdots \zeta_D] \},
\]
where $\zeta_1 \cdots \zeta_D$ is the word fixed at the beginning of the proof.
For $\nu$-almost every $\omega \in \Sigma$ the stopping time $V(\omega)$ is finite. So for  $\nu$-almost every $\omega \in \Sigma$
it takes a finite number of iterates $k$ before $f_\omega^k([0,1)) \subseteq  f^D_\zeta ([0,1))=J$ and thus $\lambda (f_\omega^k([0,1))) < \varepsilon$.
It is well known that the average of the stopping time $V$ is bounded:
\[
\int_\Sigma V(\omega) \, d\nu(\omega) < \infty.
\]

\vskip .2cm
Combining the knowledge on the stopping times $V$ and $W$, we get for $\nu$-almost all $\omega\in\Sigma$ an infinite sequence of stopping times $0 < n_1 < m_1 < n_2 < m_2 < \cdots$ with
	\begin{align*}
		n_i &= V(\sigma^{m_{i-1}} \omega),
		\\
		m_i &=  W(\sigma^{n_i} \omega)		
	\end{align*}		
	(where we set $m_0=0$).
		By	the strong law of large numbers, see \cite[Theorems 2.4.1 and 2.4.5]{MR3930614} (for finite and infinite expectations respectively)
we have that for $\nu$-almost all $\omega \in \Sigma$, 
	\begin{align*}
		\lim_{n\to \infty} \frac{1}{n} \sum_{i=1}^n (n_i-m_{i-1}) &< \infty,
		\\
		\lim_{n\to \infty} \frac{1}{n} \sum_{i=1}^n (m_i - n_i) &= \infty.
	\end{align*}
	This implies the theorem (see the calculation in the proof of \cite[Theorem~4]{MR2033186}). 
\end{proof}

\begin{theorem}\label{t:Lp=0frequency2}
Consider $L_{p_0} = 0$. Let $\beta$ be a small positive number. Let $x,y \in [0,1)$. Then for $\nu$-almost all $\omega \in \Sigma$, either $| f^n_\omega (x) - f^n_\omega(y)| = 0$ for some $n$ or $| f^n_\omega (x) - f^n_\omega(y)|  > \beta$ for infinitely many values of $n$. 
\end{theorem}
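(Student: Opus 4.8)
The plan is to follow the signed difference $s_n = f^n_\omega(x) - f^n_\omega(y) \in (-1,1)$ together with the distance $\delta_n = |s_n| = |f^n_\omega(x) - f^n_\omega(y)|$, and to argue that, unless the two orbits collide exactly (in which case $\delta_n = 0$ from some iterate onwards, since the same maps are then applied to both points), the log-distance $\ln \delta_n$ is eventually forced to behave like a mean-zero random walk, which is incompatible with remaining bounded above. First I would reduce the claim: the set of $\omega$ for which the conclusion fails is exactly $\bigcup_{n_0 \ge 0} A_{n_0}$, where $A_{n_0} = \{\omega : \delta_n > 0 \text{ for all } n, \text{ and } \delta_n \le \beta \text{ for all } n \ge n_0\}$, so it suffices to prove $\nu(A_{n_0}) = 0$ for each $n_0$. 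Since the theorem only asserts the dichotomy for small $\beta$, I may assume $\beta < 1/(N+1)$.

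Next I would analyze a single step. If $\omega_n \in \{1,\dots,M\}$ then $f_{\omega_n}$ multiplies the signed difference by $1/M$, so $\delta_{n+1} = \delta_n/M$. If $\omega_n = 0$, then $s_{n+1} \equiv N s_n \pmod 1$ with $s_{n+1} \in (-1,1)$; since $\delta_n \le \beta < 1/N$, the two points $f^n_\omega(x), f^n_\omega(y)$ lie either in the same branch $[j/N,(j+1)/N)$ of $f_0$ — in which case $\delta_{n+1} = N\delta_n$ — or in two adjacent branches, in which case a short computation gives $\delta_{n+1} = 1 - N\delta_n \ge 1 - N\beta > \beta$. The crucial consequence is that on $A_{n_0}$, for every $n \ge n_0$ with $\omega_n = 0$, the straddling alternative is \emph{impossible}: it would produce $\delta_{n+1} > \beta$, contradicting membership in $A_{n_0}$. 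Hence on $A_{n_0}$ the process $\ell_n := \ln \delta_n$ (finite, as $\delta_n > 0$) obeys, for all $n \ge n_0$, the exact recursion $\ell_{n+1} = \ell_n + \ln N$ if $\omega_n = 0$ and $\ell_{n+1} = \ell_n - \ln M$ if $\omega_n \ne 0$.

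Finally I would invoke random walk theory. On $A_{n_0}$ one has $\ell_{n_0+k} = \ell_{n_0} + S_k$, where $S_k = \sum_{j=n_0}^{n_0+k-1} \xi_j$ is the partial sum of i.i.d.\ increments $\xi_j$ taking value $\ln N$ with probability $p_0$ and $-\ln M$ with probability $1-p_0$. Because $L_{p_0} = 0$, these increments have mean zero and are non-degenerate, so the one-dimensional random walk $(S_k)_k$ is recurrent; in particular $\limsup_{k\to\infty} S_k = +\infty$ for $\nu$-almost every $\omega$ (the increments $\xi_j$, $j \ge n_0$, being independent of the prefix $\omega_0\cdots\omega_{n_0-1}$ that determines $\ell_{n_0}$). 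But $A_{n_0}$ is contained in $\{\delta_{n_0+k} \le \beta \text{ for all } k\} = \{S_k \le \ln\beta - \ell_{n_0} \text{ for all } k\} \subseteq \{\sup_k S_k < \infty\}$, which is a $\nu$-null set; hence $\nu(A_{n_0}) = 0$, which would complete the proof.

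The main obstacle, and the only genuinely delicate point, is the single-step analysis at the discontinuities of $f_0$: the distance $\delta_n$ is \emph{not} a random walk in general, since a point pair straddling a branch boundary of $f_0$ makes $\delta_{n+1}$ jump. The resolution is that this jump is necessarily \emph{upward}, to a value near $1$, so restricting to the event that the distance stays below a small $\beta$ forever is precisely what forbids such jumps and restores the clean mean-zero random walk, whose recurrence then yields the contradiction. Minor care is also needed for boundary sub-cases (a point landing exactly on a branch endpoint $j/N$, or the trivial case $x = y$), but these are handled directly and do not affect the argument.
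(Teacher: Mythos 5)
Your proof is correct and rests on the same core idea as the paper's brief argument: on the event that the distance stays positive and small, the log-distance $\ln\delta_n$ evolves as a mean-zero random walk with i.i.d.\ increments (taking $\ln N$ with probability $p_0$ and $-\ln M$ with probability $1-p_0$), whose recurrence forces $\limsup \ln\delta_n = +\infty$, a contradiction. Where you differ is in filling two gaps the paper's one-paragraph proof leaves implicit. First, you reduce ``$\delta_n > \beta$ for infinitely many $n$'' to showing each $A_{n_0}$ is $\nu$-null, which cleanly deals with the quantifier and with the possible dependence of $\ell_{n_0}$ on the prefix $\omega_0\cdots\omega_{n_0-1}$. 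Second, and more substantively, you treat the branch discontinuities of $f_0$ explicitly: you observe that if $0<\delta_n\le\beta<1/(N+1)$ and the two points straddle a boundary $j/N$, then $\delta_{n+1}=1-N\delta_n>\beta$, so this alternative cannot occur on $A_{n_0}$, which is exactly what makes $\ell_n$ an honest random walk there. The paper's phrase ``as long as $f^i_\omega$ is continuous on $[x,y]$'' only gestures at this point and never says what happens when continuity fails. Your version is therefore a complete writing-out of the paper's sketch, and the single-step straddling computation (together with noting $N\delta_n=1$ is the only way to produce an exact collision) is the welcome detail that closes the argument.
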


\begin{proof}
The proof follows from the following observation. If $x < y$ are close, then as long as $f^i_\omega$, $1\le i < n$,  is continuous on the interval $[x,y]$, we have
\[
\left| f^n_\omega (x) - f^n_\omega (y)\right| = 
(f^n_\omega)'  |x-y|.
\]
The values $z_n =  -\ln ((f^n_\omega)')$ are given by a random walk $z_0 = 0$ and
\[
z_{n+1} = \left\{ 
\begin{array}{ll}
	z_n - \ln(N), &  \omega_{n+1} = 0,
	\\
	z_n  + \ln(M), & \omega_{n+1} \in \{1,\ldots,M\},
\end{array}	
\right.
\]
for $n \ge 0$. Because $p_0 \ln (N) - (1-p_0) \ln (M) = 0$, this random walk is recurrent. So
$(f_\omega^n)' = e^{-z_n}$ takes on arbitrarily large values.
\end{proof}	

Modifying the proof of Theorem~\ref{t:M>Nergodic} allows to prove ergodicity of $\mu$ from Proposition~\ref{p:inv} also in case $L_{p_0}=0$..

\begin{theorem}\label{t:M=Nergodic}
Consider $L_{p_0} =0$. The measure $\mu = \nu \times \lambda$ is an ergodic invariant measure for $F$.
\end{theorem}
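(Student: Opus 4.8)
The plan is to mimic the proof of Theorem~\ref{t:M>Nergodic} as closely as possible: establish ergodicity of Lebesgue measure for the invertible three-dimensional map $\Gamma$ from \eqref{e:Gamma} via a Hopf argument, and then transfer it to $F$ through Lemma~\ref{p:factor}. As in that proof I would work with the time averages $\varphi^\pm$ of a (uniformly) continuous test function $\varphi$ on $[0,1]^3$ along forward and backward $\Gamma$-orbits, which exist and coincide on a set of full Lebesgue measure by invertibility (\cite[Section~3.2.3]{MR3558990}), and with the affine leaves $W^s(u) = \{u + (0,x,y)\}$ and $W^u(u) = \{u + (w,0,0)\}$. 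The goal is to show that $\varphi^+$ is a.e.\ constant along $W^s$-leaves and $\varphi^-$ a.e.\ constant along $W^u$-leaves; after that the Fubini / Hopf-chain step of \cite[Lemma~4.2.17]{MR3558990} goes through verbatim, because $W^s$ and $W^u$ are complementary coordinate subspaces and Lebesgue measure on $[0,1)^3$ is literally their product.

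The $W^u$-part is if anything simpler than in the case $L_{p_0}<0$: two points of a common $W^u$-leaf have equal $x$- and $y$-coordinates, and since the branch of $\Gamma^{-1}$ and the relevant integers depend only on the $(x,y)$-coordinates, their backward orbits keep equal $x$ and $y$ while their $w$-coordinates get multiplied by a product of factors from $\{p_0,(1-p_0)/M\}\subset(0,1)$. Hence $|\Gamma^{-n}u-\Gamma^{-n}v|\to 0$ and $\varphi^-(u)=\varphi^-(v)$ whenever both exist.

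The $W^s$-part is where the sign change of $L_{p_0}$ forces a genuine modification, and I expect this to be the main obstacle. Here Theorem~\ref{t:M>Nconvergence} is no longer available and must be replaced by the intermittency statement of Theorem~\ref{t:Lp=0frequency}, which controls only the $x$-coordinate and only on a set of iterates of density one, not all iterates. The delicate point is the third coordinate: under $\Gamma$ the $y$-coordinate is contracted by a factor from $\{p_0/N,1-p_0\}\subset(0,1)$ at every step, but two $y$-orbits emanating from a common $W^s$-leaf receive the \emph{same} update only as long as the corresponding $x$-iterates fall into the same $N$-adic interval; a mismatch kicks the two $y$-coordinates apart by an $O(1)$ amount. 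The observation I would exploit is that such a mismatch can occur only when $f^n_\omega(x_u)$ and $f^n_\omega(x_v)$ straddle a discontinuity of $f_0$, across which $f_0$ jumps from values near $1$ to values near $0$; hence a mismatch at time $n$ forces $|f^{n+1}_\omega(x_u)-f^{n+1}_\omega(x_v)|$ to be close to $1$, so the set of ``bad'' times is contained in $E_\varepsilon\cup(E_\varepsilon-1)$ with $E_\varepsilon=\{n : |f^n_\omega(x_u)-f^n_\omega(x_v)|\ge\varepsilon\}$, which has density zero by Theorem~\ref{t:Lp=0frequency}. Combining this with the uniform contraction rate $\rho=\max\{p_0/N,1-p_0\}<1$ in the $y$-direction between bad times shows that the $y$-separation of $\Gamma^n u$ and $\Gamma^n v$ exceeds any prescribed threshold only on a density-zero set of $n$. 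Therefore $|\Gamma^n u-\Gamma^n v|$ is small off a density-zero set of times, and since $\varphi$ is bounded and uniformly continuous the Cesàro differences $\frac1n\sum_{i<n}(\varphi(\Gamma^i u)-\varphi(\Gamma^i v))$ tend to zero, giving $\varphi^+(u)=\varphi^+(v)$.

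With these two leafwise-constancy statements in hand, the Hopf-chain argument produces a.e.\ constancy of $\varphi^+$, hence ergodicity of Lebesgue measure for $\Gamma$; Lemma~\ref{p:factor} then yields ergodicity of $\mu=\nu\times\lambda$ for $F$. The only place where real care is needed is the $y$-coordinate bookkeeping just described; everything else is a transcription of the proof of Theorem~\ref{t:M>Nergodic}.
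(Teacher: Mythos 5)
Your proposal is correct and follows essentially the same route as the paper: prove ergodicity of Lebesgue measure for the invertible extension $\Gamma$ via a Hopf argument, with Theorem~\ref{t:Lp=0frequency} supplying density-one closeness of the $x$-orbits in place of the synchronization of Theorem~\ref{t:M>Nconvergence}, and with the always-contracting $w$-direction of $\Gamma^{-1}$ handling the $W^u$-leaves. Your explicit $y$-coordinate bookkeeping --- mismatch times contained in $E_\varepsilon\cup(E_\varepsilon-1)$ (since a straddled $N$-adic discontinuity at time $n$ forces an $x$-separation of order $1-N\varepsilon>\varepsilon$ at time $n+1$), followed by a uniform $y$-contraction estimate sweeping out windows of length $O(\log(1/\delta))$ after each mismatch --- is a clean way of making precise the paper's brief assertion that ``if $|\pi_x\Gamma^i(u)-\pi_x\Gamma^i(v)|$ stays small then also $|\pi_y\Gamma^i(u)-\pi_y\Gamma^i(v)|$ stays small,'' where the paper instead leans on the stopping-time structure inside the proof of Theorem~\ref{t:Lp=0frequency} together with the observation that $D$ consecutive contracting symbols squash the $y$-separation to $(1-p_0)^D$ uniformly in the initial point.
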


\begin{proof}
The proof follows that of Theorem~\ref{t:M>Nergodic}, replacing the statement of Theorem~\ref{t:M>Nconvergence} by the statement and arguments of Theorem~\ref{t:Lp=0frequency}. The proof of Theorem~\ref{t:Lp=0frequency} calculates stopping times to get the statement on the dynamics of the $x$-coordinate. We must incorporate the $y$-coordinate. The following observations show how this works.

\vskip .2cm
Write  $\pi_w,\pi_x,\pi_y: [0,1)^3 \to [0,1)$ for the coordinate projection to the $w$-coordinate, $x$-coordinate, and $y$-coordinate, respectively. First consider a $\zeta \in \Sigma$ with $\zeta_i \in \{1,\ldots,M\}$ for $0\le i < D$ for some large $D$. Then $J=f^D_\zeta([0,1))$ is a small interval, see \eqref{e:J}. Recall the definition of the isomorphism $h$ between the left shift $\sigma$ and the expanding interval map $L$ from \eqref{q:isomh}. Note that $\pi_y \Gamma^D (h(\zeta),x,y)$ is independent of $x$ and $H = \pi_y \Gamma^D (h(\zeta) , x,[0,1))$ is an interval of length $\lambda(H)  = (1-p_0)^D$. This is small for $D$ large.

\vskip .2cm
Next, whenever $u,v \in [0,1)^3$ with $\pi_w u = \pi_w v$ and $\pi_x \Gamma (u), \pi_x \Gamma(v)$ are close to each other, then $\pi_y \Gamma$ contracts the distance between the points with a uniform contraction factor.
So if  $|\pi_x \Gamma^i (u) - \pi_x \Gamma^i(v)|$ stays small, then also $|\pi_y \Gamma^i (u) - \pi_y \Gamma^i(v)|$ stays small.
\end{proof}	

As in Proposition~\ref{p:uniformlylambda<0} we conclude that typical orbits are uniformly distributed.
Recall from \eqref{q:2ptifs} the definition of the two-point maps $f^{(2)}_i: [0,1)^2 \to [0,1)^2$ given by 
\[
f^{(2)}_i (x,y) = (f_i (x), f_i(y)), \, 0 \le i \le M.
\]
In Corollary~\ref{c:diagonal} we established that Lebesgue measure on the diagonal $\Delta = \{ (x,x) \; ; \; x \in [0,1) \} \subset [0,1)^2$ is stationary for the iterated function system on $[0,1)^2$ generated by $f^{(2)}_i$ with probabilities $p_i$, $0\le i \le M$. In the theorem below we write $\left.\lambda\right|_A$ for two-dimensional Lebesgue measure restricted to $A$. The theorem gives, for multiplicatively dependent $M,N$, an explicit expression for an infinite stationary measure of full topological support, with a density that diverges along the diagonal $\Delta$.

\begin{theorem}\label{t:2point}
Consider $L_{p_0} = 0$ and $(M,N)$ with $N,M$ multiplicatively dependent:  $N = \kappa^{k}$ and  $M =  \kappa^\ell$.
Then the iterated function system generated by the two-point maps $f^{(2)}_i$, $0\le i \le M$, admits a $\sigma$-finite infinite absolutely continuous stationary measure.
\end{theorem}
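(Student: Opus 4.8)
The plan is to exhibit the measure explicitly, exploiting the base-$\kappa$ structure that multiplicative dependence provides. Writing points of $[0,1)^2$ through their base-$\kappa$ expansions, partition $[0,1)^2 = \Delta \sqcup \bigsqcup_{q\ge 0} A_q$, where $A_q$ consists of the pairs $(x,y)$ whose base-$\kappa$ expansions coincide in exactly the first $q$ digits. One checks that $\lambda(\Delta)=0$ and $\lambda(A_q)=(1-\kappa^{-1})\kappa^{-q}$, and that $A_q$ is the disjoint union of $\kappa^q$ affine copies of the fixed set $E_0=\{(u,v)\in[0,1)^2 : \lfloor \kappa u\rfloor \ne \lfloor \kappa v\rfloor\}$. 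I will look for a stationary measure of the form $\mu^{(2)}=\sum_{q\ge 0} c_q\,\lambda|_{A_q}$, constant on each $A_q$.

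The key step is to compute the transfer operator $\mathcal T\rho = \sum_{i=0}^M p_i\,(f_i^{(2)})_*\rho$ on such measures. The contracting maps $f_1^{(2)},\dots,f_M^{(2)}$ prepend an $\ell$-digit word to both coordinates, so together they map $A_q$ onto $A_{q+\ell}$ in $M=\kappa^\ell$ disjoint affine pieces, each of Jacobian $\kappa^{-2\ell}$. The map $f_0^{(2)}$ deletes the first $k$ digits of both coordinates: for $q\ge k$ it maps $A_q$ onto $A_{q-k}$, $\kappa^k$-to-one, with Jacobian $\kappa^{2k}$; for $q<k$ it deletes, among those $k$ digits, the place where the two expansions differ, so the remaining digit streams of the two coordinates become independent and uniform, and hence $(f_0^{(2)})_*(\lambda|_{A_q}) = \lambda(A_q)\,\lambda$ on all of $[0,1)^2$. (This last case is exactly where the discontinuities of $f_0$ would otherwise obstruct the analysis, and where multiplicative dependence makes the outcome clean.) Collecting terms, $\mathcal T$ preserves the class of measures constant on the $A_q$, and in the variables $b_q := \kappa^{-q} c_q$ it acts by $b\mapsto bP$, with $P$ the stochastic kernel on $\mathbb Z_{\ge 0}$ given by $P(m,m+\ell)=1-p_0$ for all $m$, $P(m,m-k)=p_0$ for $m\ge k$, and, for $0\le m<k$, $P(m,\cdot)=(1-p_0)\delta_{m+\ell}+p_0\sum_{j\ge 0}(1-\kappa^{-1})\kappa^{-j}\delta_j$.

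The chain $P$ is irreducible on $\mathbb Z_{\ge 0}$: from any state, repeated $-k$ steps reach $\{0,\dots,k-1\}$, whence the reset reaches every state. Crucially, the hypothesis $L_{p_0}=0$ is equivalent to $(1-p_0)\ell = p_0 k$, i.e. to $P$ having zero one-step drift off the finite set $\{0,\dots,k-1\}$ (equivalently, $m\mapsto m$ is $P$-harmonic for $m\ge k$). By standard random walk theory such a chain is null recurrent; concretely, Foster's criterion with the proper Lyapunov function $V(m)=m$ gives recurrence, while a finite stationary measure is excluded by the zero-drift behaviour. Hence $P$ has a stationary measure $\pi$, unique up to scaling, that is $\sigma$-finite with $0<\pi_q<\infty$ for all $q$ and $\sum_{q\ge 0}\pi_q = \infty$ (alternatively, the interior recursion for $\pi$ is a linear recurrence whose characteristic polynomial $p_0 z^{k+\ell}-z^\ell+1-p_0$ has $z=1$ as a root of multiplicity two precisely when $L_{p_0}=0$, which forces $\pi_q$ not to decay). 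Then set $\mu^{(2)} := \sum_{q\ge 0}\kappa^q\pi_q\,\lambda|_{A_q}$, i.e. $c_q=\kappa^q\pi_q$ and $b_q=\pi_q$. Now $\mu^{(2)}\ll\lambda$, with density $(x,y)\mapsto \kappa^{q(x,y)}\pi_{q(x,y)}$, which is $\lambda$-a.e. positive (so $\mu^{(2)}$ has full topological support); it is $\sigma$-finite since $\mu^{(2)}(\Delta)=0$ and $\mu^{(2)}(A_q)=(1-\kappa^{-1})\pi_q<\infty$; it is infinite since $\mu^{(2)}([0,1)^2)=(1-\kappa^{-1})\sum_{q}\pi_q=\infty$; and it is stationary because, by the key step, $\mathcal T\mu^{(2)}$ again has constant density $\kappa^q(\pi P)_q = \kappa^q\pi_q$ on each $A_q$, so $\mathcal T\mu^{(2)}=\mu^{(2)}$.

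I expect the technical heart to be two things. First, the transfer computation on $\bigcup_{q<k}A_q$: making rigorous, with the correct preimage multiplicities, that $f_0^{(2)}$ really sends $\lambda|_{A_q}$ to $\lambda(A_q)$ times Lebesgue on the whole square; this rests on the fact that after deleting the relevant $k$ leading digits, the two coordinate digit streams are genuinely independent and uniform. Second, the recurrence and infinite total mass of $\pi$, i.e. confirming that the zero-drift chain $P$ is recurrent but not positive recurrent. The remaining bookkeeping — the small-index cases $q<\ell$ and $q<k$, and checking that $P$ is stochastic — is routine and introduces no new ideas.
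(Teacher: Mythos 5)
Your argument is correct and ultimately follows the same strategy as the paper's proof: the push-forward operator $\mathcal P$ preserves a class of densities determined by the $\kappa$-adic self-similar structure, reduces on that class to a Markov chain on $\mathbb{N}$, and the hypothesis $L_{p_0}=0$ makes that chain null recurrent, so its (unique up to scaling) stationary measure is $\sigma$-finite and infinite.

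Where you differ from the paper is in the bookkeeping. The paper parametrizes the admissible measures as $m^{(2)}=\sum_{h\ge 0} m_h$ with $m_h$ uniform of total mass $b_h$ on the union $S_h=\bigcup_j[j/\kappa^h,(j+1)/\kappa^h)^2$ of nested diagonal squares; after identifying $S_h$ with $h\in\mathbb{N}$, $\mathcal P$ becomes the \emph{reflected} random walk $x\mapsto\max\{0,x-k\}$ (probability $p_0$) or $x\mapsto x+\ell$ (probability $1-p_0$). You instead work with the disjoint shells $A_q$ (pairs agreeing in exactly $q$ base-$\kappa$ digits), assign a constant density $c_q$ on each, and change variables to $\beta_q=\kappa^{-q}c_q$. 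This turns the boundary behaviour into a \emph{geometric reset}: for $m<k$, the $-k$ step becomes the kernel $p_0\sum_j(1-\kappa^{-1})\kappa^{-j}\delta_j$, reflecting your computation that $(f_0^{(2)})_*(\lambda|_{A_q})=\lambda(A_q)\,\lambda$ when $q<k$. The two chains are different Markov chains (reflection versus geometric reset), but they arise from the same operator $\mathcal P$ on related measure classes (your class is the slightly larger one of arbitrary nonnegative shell densities; the paper's corresponds to the non-decreasing densities $c_q=\sum_{h\le q}b_h\kappa^h$), and both are irreducible, aperiodic, and null recurrent when $L_{p_0}=0$, so by uniqueness they produce the same stationary measure. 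In short: same mechanism, different and equally valid coordinates; your shell decomposition makes the role of the discontinuities of $f_0$ (the $q<k$ case) more explicit, while the paper's nested parametrization keeps the chain's formula cleaner. A small note: the paper's display \eqref{e:rwxnul} inside that proof appears to have $k$ and $\ell$ swapped (the step sizes should be $-k$ with probability $p_0$ and $+\ell$ with probability $1-p_0$, consistent with \eqref{e:rwx} and with the table of $\hat m_i$ that precedes it); your derivation has the correct signs and step sizes.
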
	

\begin{proof}
We look for an invariant measure $m^{(2)}$ of the form $m^{(2)} = \sum_{h=0}^\infty m_h$ with		
\begin{align} \label{e:mtobnull}
	m_h &= b_h \sum_{j=0}^{\kappa^h-1}  \kappa^h  \left.\lambda\right|_{ [j/\kappa^h,(j+1)/\kappa^h )^2},
\end{align}			
where  $b_h$ can be read as the mass assigned to $\bigcup_{j=0}^{\kappa^h-1} \left[j/\kappa^h,(j+1)/\kappa^h \right)^2$, which is the union of the squares on the diagonal of size $\frac1{\kappa^{2h}}$ determined by $\kappa$-adic neighbors $i/\kappa^{2h},(i+1)/\kappa^{2h}$. So $m_h ([0,1)^2) = b_h$ and 
for the total measure we have
\[ m^{(2)} ( [0,1)^2) = \sum_{h=1}^\infty m_h ([0,1)^2) = \sum_{h=1}^\infty b_h.\]

Consider the push-forward map of measures given by
\begin{align}\label{e:Pnull}
	\mathcal{P} m &= p_0 \left(f_0^{(2)} \right)_* m + \sum_{j=1}^M  \frac{1-p_0}{M} \left( f_j^{(2)} \right)_* m.
\end{align}
Then $\mathcal{P}$ maps $\sum_{i=0}^\infty m_i$ to  $\sum_{i=0}^\infty \hat{m}_i$,
with
\begin{align*}
	\hat{m}_0 &=  p_0  ( m_0 + \cdots + m_k ),
	\\
	\hat{m}_1 &=   p_0 m_{k+1},
	\\
	\vdots \; &=  \quad\vdots
	\\
	\hat{m}_{\ell-1} &= p_0 m_{k+\ell-1},
	\\
	\hat{m}_\ell &= (1-p_0) m_{0} + p_0 m_{k+\ell},
	\\
	\vdots \; &=  \quad\vdots
	\\
	\hat{m}_{\ell + j} &=  (1-p_0) m_{j} + p_0 m_{j+k+\ell},
	\\
	\vdots \; &=  \quad\vdots
\end{align*}	
A measure of the sought for form $\sum_{h=0}^\infty m_h$ with $m_h$ as in \eqref{e:mtobnull} is determined by the sequence of numbers $(b_i)_{i\in\mathbb{N}}$. 
Write
\begin{align*}
	\mathcal{M}  (\mathbb{N}) &= \left\{  (b_i)_{i\in\mathbb{N}} \; ; \; b_i \ge 0   \right\},
\end{align*}	
which can be identified with the set of $\sigma$-finite measures on $\mathbb{N}$.
The push-forward map $\mathcal{P}$ from \eqref{e:Pnull} induces a map 
$\mathcal{Q}: \mathcal{M} (\mathbb{N} ) \to \mathcal{M} (\mathbb{N})$. To make this explicit, suppose $\mathcal{P}$ maps $\sum_{i=0}^\infty m_i$ to  $\sum_{i=0}^\infty \hat{m}_i$, and  $(b_i)_{i\in\mathbb{N}}$ is given by \eqref{e:mtobnull}
and likewise  $(\hat{b}_i)_{i\in\mathbb{N}}$ corresponds to $\sum_{i=0}^\infty \hat{m}_i$.
Denoting $\mathbf{b} = (b_i)_{i\in\mathbb{N}}$, 
$\hat{\mathbf{b}}  =(\hat{b}_i)_{i\in\mathbb{N}}$, then
\begin{align*} 
	\mathcal{Q} (\mathbf{b}) &= \hat{\mathbf{b}}.
\end{align*}	

Identifying the union of squares $\cup_{j=0}^{\kappa^h-1} [j/\kappa^h , (j+1)/\kappa^h)^2$
with the integer $h$,  
$\mathcal{Q}$ becomes the push-forward operator associated to the random walk on $\mathbb{N}$ given by
\begin{align}\label{e:rwxnul}
	x_{n+1} = \left\{  
	\begin{array}{ll}   \max\{ 0 , x_n - \ell \}, &   \omega_{n+1} = 0, 
		\\     
		x_n + k, & \omega_{n+1} \in \{1,\ldots,M\}.
	\end{array}            
	\right.
\end{align}

As $k$ and $\ell$ are relatively prime, by B\'ezout's identity there are integers $\alpha,\beta$ with $\alpha k + \beta \ell =1$. 
Noting this, it follows from $L_{p_0}=0$ that \eqref{e:rwxnul} is recurrent.
It is in fact null-recurrent, and not positively recurrent,  since $L_{p_0}=0$  implies that the expected return time to a site is infinite  (see \cite[Section~2.3]{MR2255511}).
By \cite{MR60757} there is a unique infinite stationary measure for \eqref{e:rwxnul}.
This gives the fixed point $\mathcal{Q} (\mathbf{b}) = \mathbf{b}$ with the required
property that $\sum_{h=0}^\infty b_h = \infty$.
\end{proof}

\subsection{$L_{p_0} > 0$ (Divergence)}

A goal of this section is to explain the outcome of numerical experiments such as depicted in Figure~\ref{f:divergence}. From a dynamics point of view this is done in Theorem~\ref{t:divergence} below, which follows other results on invariant measures. 

\begin{figure}[!ht]
	\begin{center}
		\includegraphics[height=3.5cm]{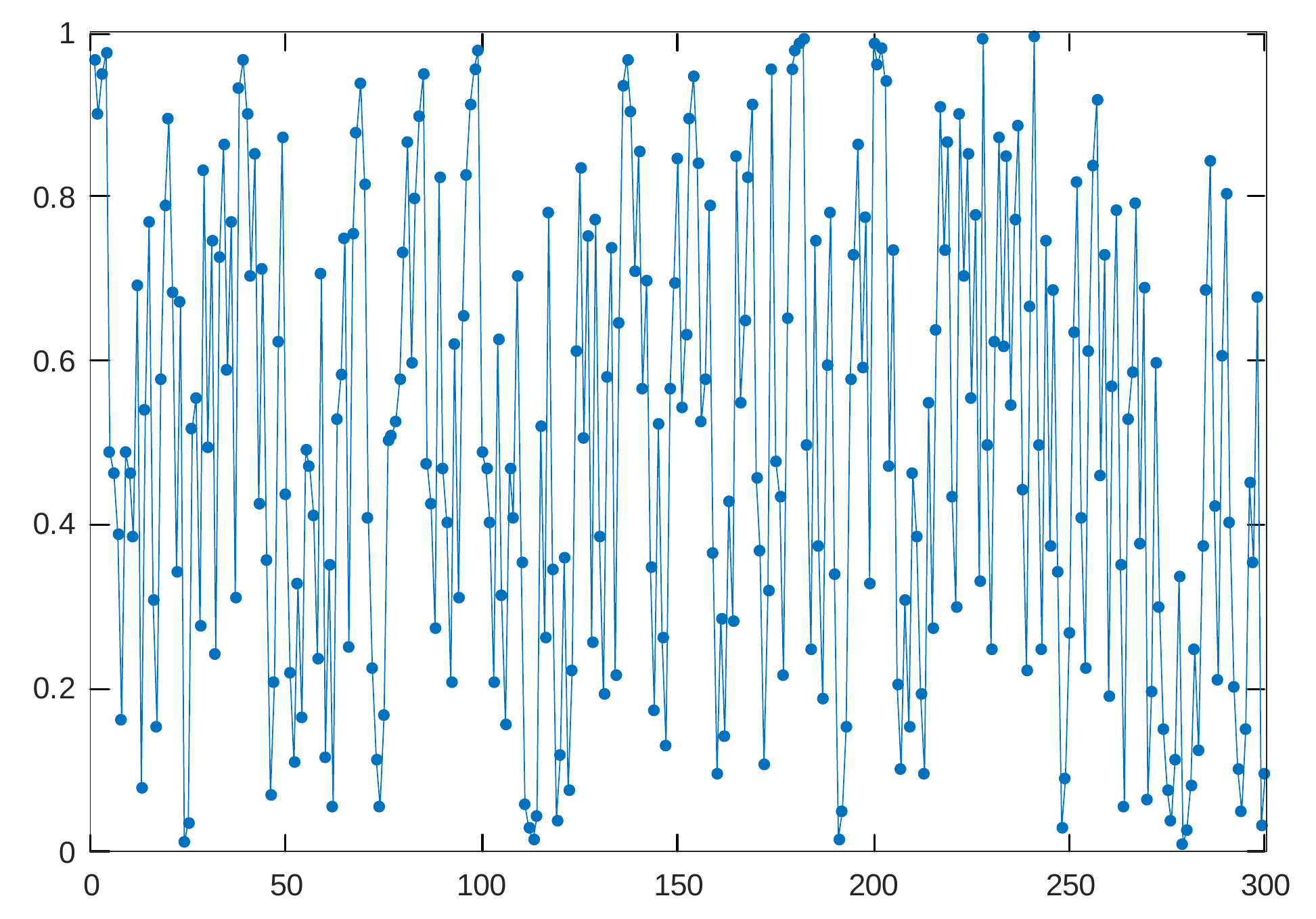}
		\hspace{1cm}
		\includegraphics[height=3.5cm]{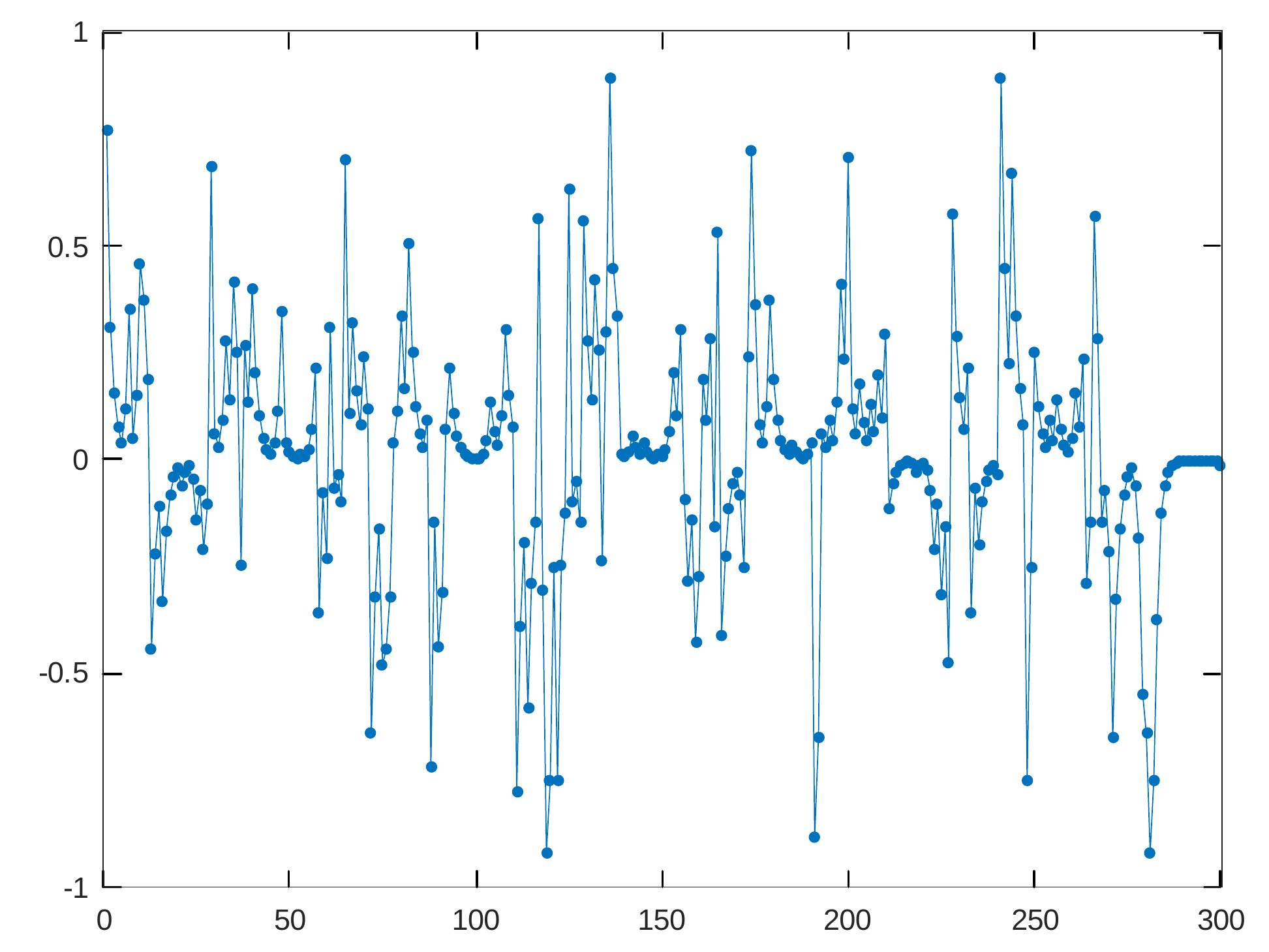}
		\caption{\label{f:divergence} A time series of $f^n_\omega (x)$ for $p_0 = 1/2$  and $(M,N)=(2,3)$. The right panel shows a signed difference with another time series with the same $\omega$.
		}
	\end{center}	
\end{figure}

First we establish the ergodicity of the product measure $\mu= \nu \times \lambda$, which proceeds by connecting to Theorem~\ref{t:M>Nergodic}. 

\begin{theorem}\label{t:M<Nergodic}
	Consider $L_{p_0}>0$. The measure $\mu$ is an ergodic invariant measure for $F$.
\end{theorem}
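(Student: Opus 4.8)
The plan is to mimic the proof of Theorem~\ref{t:M>Nergodic}, that is, to establish ergodicity of Lebesgue measure for the invertible three-dimensional map $\Gamma$ via a Hopf argument and then invoke Proposition~\ref{p:factor} to descend to $F$. The crucial change is that in the case $L_{p_0} > 0$ the roles of the stable and unstable directions are reversed compared to the case $L_{p_0} < 0$: it is now the $w$-direction that is (weakly) expanding and the $(x,y)$-leaves along which the synchronization/convergence phenomenon must be sought for $\Gamma^{-1}$ rather than $\Gamma$. Concretely, consider the inverse map $\Gamma^{-1}$. Under $\Gamma^{-1}$ the $x$-coordinate is multiplied by $1/N$ on one branch and by $M$ on the other, so for $\Gamma^{-1}$ the relevant Lyapunov exponent is $-L_{p_0} < 0$. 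Hence the convergence statement of Theorem~\ref{t:M>Nconvergence}, applied to the iterated function system whose maps are the branches of $\Gamma^{-1}$ acting on the $x$-coordinate (which is exactly the original IFS $\{f_0,\dots,f_M\}$ but with the contracting maps now expanding and vice versa — wait, more carefully: the branches of $\Gamma^{-1}$ in the $x$-coordinate are $x \mapsto (x+j)/N$ and $x \mapsto Mx - i$, i.e. an IFS of the same structural type but with the pair $(M,N)$ swapped to $(N,M)$), has negative Lyapunov exponent and gives synchronization backward in time.

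With this observation the Hopf argument goes through as before. Let $\varphi$ be continuous on $[0,1)^3$ and form the forward and backward Birkhoff averages $\varphi^+$ and $\varphi^-$, which exist and agree Lebesgue-a.e.\ by invariance of Lebesgue measure. Define the "stable" manifolds for $\Gamma^{-1}$ to be the sets $\widetilde W^s(u) = \{u + (0,x,y)\}$ (pieces of $(x,y)$-planes) and the "unstable" manifolds $\widetilde W^u(u) = \{u + (w,0,0)\}$. By the backward-time synchronization just described, for $u$ in a set of full Lebesgue measure and $v \in \widetilde W^s(u)$ one gets $|\Gamma^{-n}(u) - \Gamma^{-n}(v)| \to 0$ (in the $x$-coordinate by Theorem~\ref{t:M>Nconvergence} applied to the swapped pair, and then automatically in the $y$-coordinate, since along a branch where the $x$-coordinates of $\Gamma^{-1}$-iterates stay close the $y$-coordinate is uniformly contracted — exactly the mechanism used at the end of the proof of Theorem~\ref{t:M=Nergodic}), hence $\varphi^-(u) = \varphi^-(v)$. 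Symmetrically, along $\widetilde W^u(u)$ the forward iterates of $\Gamma$ expand the $w$-coordinate, but what matters is that $\varphi^+$ is constant on $w$-fibers: this follows because $\Gamma$ restricted to a $w$-fiber is (a skew product over) the expanding map $L$, and one can argue as in \cite[Section~4.2.6]{MR3558990} that ergodicity of $L$ for Lebesgue measure forces $\varphi^+$ to be constant along $\widetilde W^u(u)$ for a.e.\ $u$. Then, exactly as in the proof of Theorem~\ref{t:M>Nergodic}, a Fubini argument produces a full-measure set $Y$ such that any two points of $Y$ can be joined by a short $\widetilde W^s$–$\widetilde W^u$–$\widetilde W^s$ chain, yielding that $\varphi^+ = \varphi^-$ is a.e.\ constant. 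Ergodicity of Lebesgue measure for $\Gamma$ follows, and then ergodicity of $\mu$ for $F$ by Proposition~\ref{p:factor}.

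\textbf{Main obstacle.} The delicate point is the convergence along the $(x,y)$-leaves under $\Gamma^{-1}$, i.e. that $\widetilde W^s(u)$ is genuinely a stable leaf for $\Gamma^{-1}$. Theorem~\ref{t:M>Nconvergence} was proved for the IFS $\{f_0,\dots,f_M\}$ as stated, with $f_0$ the $N$-adic expanding map and the others contracting; here one needs the analogous statement for the branches of $\Gamma^{-1}$ in the $x$-coordinate. Structurally these branches form an IFS of exactly the same type with the pair $(M,N)$ replaced by $(N,M)$ — the single expanding branch $x \mapsto Mx - i$ (probability $1-p_0$ split into $M$ equal pieces) and the $N$ contracting branches $x \mapsto (x+j)/N$ (probability $p_0$ split into $N$ equal pieces) — and its Lyapunov exponent is $(1-p_0)\ln M - p_0 \ln N = -L_{p_0} < 0$. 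One should check that the probability weights match the template \eqref{q:vectorp} after the swap, which they do not literally (the weight on the expanding branch is $1-p_0$, not a free parameter), but Theorem~\ref{t:M>Nconvergence} only uses $L_{p_0}<0$ together with the Borel–Cantelli estimate on approach to the discontinuity set, and that estimate only needs stationarity of Lebesgue measure, which holds here as well by the analogue of Proposition~\ref{p:inv}. Hence Theorem~\ref{t:M>Nconvergence} applies verbatim to the backward $x$-dynamics. The remaining bookkeeping — transferring $x$-convergence to $y$-convergence along the relevant branches, and the Fubini/Hopf combinatorics — is routine and identical to what was done in Theorems~\ref{t:M>Nergodic} and \ref{t:M=Nergodic}. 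I therefore expect essentially no new difficulty beyond correctly identifying the time-reversed picture; the proof should be short, quoting Theorem~\ref{t:M>Nergodic} and indicating the modifications.
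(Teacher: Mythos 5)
The high-level strategy is right — consider $\Gamma^{-1}$, note that its $x$-dynamics has negative Lyapunov exponent $-L_{p_0}$, and run a Hopf argument — and that is also the paper's route. But the execution contains a genuine error in the identification of the stable and unstable foliations for $\Gamma^{-1}$, and it leads to a claim that is simply false.

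You keep the same split as in the $L_{p_0}<0$ case, declaring the $(x,y)$-planes to be stable for $\Gamma^{-1}$ and the $w$-lines to be unstable. Look, however, at the formula for $\Gamma^{-1}$: on both branches $y \mapsto Ny/p_0 - j$ and $y \mapsto (y-p_0)/(1-p_0)$, so the $y$-coordinate is \emph{uniformly expanded}, not contracted, by $\Gamma^{-1}$ — it is exactly the base coordinate that drives the choice of branch, playing for $\Gamma^{-1}$ the role that $w$ plays for $\Gamma$. Your parenthetical claim that ``the $y$-coordinate is uniformly contracted'' under $\Gamma^{-1}$ is therefore wrong. Worse, two points in your $\widetilde W^s(u)$ that differ in $y$ generically land in different branches of $\Gamma^{-1}$ from the start, so their $x$-components evolve under \emph{different} noise sequences; the synchronization of Theorem~\ref{t:M>Nconvergence} requires identical driving noise and cannot be applied to conclude $|\Gamma^{-n}(u)-\Gamma^{-n}(v)|\to 0$ on $\widetilde W^s(u)$. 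Your substitute argument on $\widetilde W^u(u)$ — that $\varphi^+$ is constant on $w$-lines because $L$ is ergodic — is also not the Hopf mechanism and is unjustified: forward iterates of $\Gamma$ separate points that differ only in $w$, so there is no contraction to invoke.

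The correct picture, which is what the paper's terse proof encodes when it says that for $\Gamma^{-1}$ the map is ``expanding in the direction of $y$ and contracting in the direction of $w$,'' is the mirror image of what you wrote: for $\Gamma^{-1}$ the stable leaves are the $(w,x)$-planes $\{u+(w,x,0)\}$ — here $w$ is uniformly contracted, $x$ is contracted on average (by Theorem~\ref{t:M>Nconvergence} with $(M,N)$ and $p_0$ replaced by $(N,M)$ and $1-p_0$, which does fit the template \eqref{q:vectorp}), and crucially $y$ is held fixed so the two points see the same driving sequence — while the unstable leaves are the $y$-lines $\{u+(0,0,y)\}$, along which convergence holds under $\Gamma$ (since $\Gamma$ contracts $y$ uniformly). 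With these foliations the Hopf/Fubini argument of Theorem~\ref{t:M>Nergodic} goes through; with yours it breaks at the first step.
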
	

\begin{proof}
	Recall the definition of the invertible map $\Gamma : [0,1)^3 \to [0,1)^3$ from \eqref{e:Gamma} with inverse
	\[ \Gamma^{-1} (w,x,y) = 
	\displaystyle{   \begin{cases}
			\left( p_0 w  , \frac{x+j}{N},  \frac{Ny}{p_0} -j \right), &  \frac{jp_0}{N}  \le y <  \frac{(j+1)p_0}{N},  \, 0\le j < N,\\
			\left(\frac{(1-p_0)(w+i)}{M}, M x -i ,   \frac{y-p_0}{1-p_0}  \right), & \begin{array}{@{}c@{}} p_0 \le y < 1, \\ 
				\frac{i}{M} \le x < \frac{i+1}{M},  \, 0\le i < M.
			\end{array}	
		\end{cases}
	}\]
	As in the proof of Theorem~\ref{t:M>Nergodic} one proves that three-dimensional Lebesgue measure is ergodic for $\Gamma^{-1}$, with the difference that now the map is expanding in the direction of $y$ and contracting in the direction of $w$. Lebesgue measure is therefore also ergodic for $\Gamma$. Reasoning as for Theorem~\ref{t:M>Nergodic}, it follows that $\mu$ is ergodic for $F$.
\end{proof}	

As a corollary we have that typical orbits are uniformly distributed, see Proposition~\ref{p:uniformlylambda<0}.

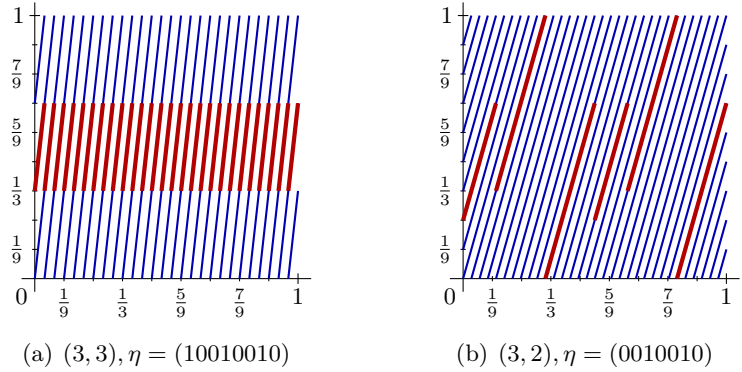
\begin{figure}[!ht]
	\begin{center}
		\subfigure[$(3,3),\eta = (10010010)$]{
			\begin{tikzpicture}[scale=3.5]
				\draw(-.05,0)node[below]{\footnotesize $0$}--(1/9,0)node[below]{\footnotesize $\frac19$}--(3/9,0)node[below]{\footnotesize $\frac13$}--(5/9,0)node[below]{\footnotesize $\frac59$}--(7/9,0)node[below]{\footnotesize $\frac79$}--(1,0)node[below]{\footnotesize $1$}--(1.05,0);
				\draw(0,-.05)--(0,1/9)node[left]{\footnotesize $\frac19$}--(0,1/3)node[left]{\footnotesize $\frac13$}--(0,5/9)node[left]{\footnotesize $\frac59$}--(0,7/9)node[left]{\footnotesize $\frac79$}--(0,1)node[left]{\footnotesize 1}--(0,1.05);
				\draw(1,-.02)--(1,.02)(1/9,-.01)--(1/9,.01)(2/9,-.01)--(2/9,.01)(3/9,-.01)--(1/3,.01)(4/9,-.01)--(4/9,.01)(5/9,-.01)--(5/9,.01)(6/9,-.01)--(6/9,.01)(7/9,-.01)--(7/9,.01)(8/9,-.01)--(8/9,.01);
				\draw(-.02,1)--(.02,1)(-.01,1/9)--(.01,1/9)(-.01,2/9)--(.01,2/9)(-.01,1/3)--(.01,1/3)(-.01,4/9)--(.01,4/9)(-.01,5/9)--(.01,5/9)(-.01,6/9)--(.01,6/9)(-.01,7/9)--(.01,7/9)(-.01,8/9)--(.01,8/9);
				%\draw[dotted](.5,0)--(.5,1)(0,1/3)--(1,1/3)(0,2/3)--(1,2/3);
				\draw[thick, blue!70!black] (0,0)--(1/9,1)(1/27,0)--(4/27,1)(2/27,0)--(5/27,1)(3/27,0)--(6/27,1)(4/27,0)--(7/27,1)(5/27,0)--(8/27,1)(6/27,0)--(9/27,1)(7/27,0)--(10/27,1)(8/27,0)--(11/27,1)(9/27,0)--(12/27,1)(10/27,0)--(13/27,1)(11/27,0)--(14/27,1)(12/27,0)--(15/27,1)(13/27,0)--(16/27,1)(14/27,0)--(17/27,1)(15/27,0)--(18/27,1)(16/27,0)--(19/27,1)(17/27,0)--(20/27,1)(18/27,0)--(21/27,1)(19/27,0)--(22/27,1)(20/27,0)--(23/27,1)(21/27,0)--(24/27,1)(22/27,0)--(25/27,1)(23/27,0)--(26/27,1)(24/27,0)--(27/27,1)(0,1/3)--(2/27,1)(0,2/3)--(1/27,1)(25/27,0)--(1,2/3)(26/27,0)--(1,1/3);
				\draw[ultra thick, red!70!black] (0,1/3)--(1/27,2/3)(1/27,1/3)--(2/27,2/3)(2/27,1/3)--(3/27,2/3)(3/27,1/3)--(4/27,2/3)(4/27,1/3)--(5/27,2/3)(5/27,1/3)--(6/27,2/3)(6/27,1/3)--(7/27,2/3)(7/27,1/3)--(8/27,2/3)(8/27,1/3)--(9/27,2/3)(9/27,1/3)--(10/27,2/3)(10/27,1/3)--(11/27,2/3)(11/27,1/3)--(12/27,2/3)(12/27,1/3)--(13/27,2/3)(13/27,1/3)--(14/27,2/3)(14/27,1/3)--(15/27,2/3)(15/27,1/3)--(16/27,2/3)(16/27,1/3)--(17/27,2/3)(17/27,1/3)--(18/27,2/3)(18/27,1/3)--(19/27,2/3)(19/27,1/3)--(20/27,2/3)(20/27,1/3)--(21/27,2/3)(21/27,1/3)--(22/27,2/3)(22/27,1/3)--(23/27,2/3)(23/27,1/3)--(24/27,2/3)(24/27,1/3)--(25/27,2/3)(25/27,1/3)--(26/27,2/3)(26/27,1/3)--(27/27,2/3);
			\end{tikzpicture}
		}
		\hspace{1cm}
		\subfigure[$(3,2),\eta = (0010010)$]{
			\begin{tikzpicture}[scale=3.5]
				\draw(-.05,0)node[below]{\footnotesize $0$}--(1/9,0)node[below]{\footnotesize $\frac19$}--(3/9,0)node[below]{\footnotesize $\frac13$}--(5/9,0)node[below]{\footnotesize $\frac59$}--(7/9,0)node[below]{\footnotesize $\frac79$}--(1,0)node[below]{\footnotesize $1$}--(1.05,0);
				\draw(0,-.05)--(0,1/9)node[left]{\footnotesize $\frac19$}--(0,1/3)node[left]{\footnotesize $\frac13$}--(0,5/9)node[left]{\footnotesize $\frac59$}--(0,7/9)node[left]{\footnotesize $\frac79$}--(0,1)node[left]{\footnotesize 1}--(0,1.05);
				\draw(1,-.02)--(1,.02)(1/9,-.01)--(1/9,.01)(2/9,-.01)--(2/9,.01)(3/9,-.01)--(1/3,.01)(4/9,-.01)--(4/9,.01)(5/9,-.01)--(5/9,.01)(6/9,-.01)--(6/9,.01)(7/9,-.01)--(7/9,.01)(8/9,-.01)--(8/9,.01);
				\draw(-.02,1)--(.02,1)(-.01,1/9)--(.01,1/9)(-.01,2/9)--(.01,2/9)(-.01,1/3)--(.01,1/3)(-.01,4/9)--(.01,4/9)(-.01,5/9)--(.01,5/9)(-.01,6/9)--(.01,6/9)(-.01,7/9)--(.01,7/9)(-.01,8/9)--(.01,8/9);
				%\draw[dotted](.5,0)--(.5,1)(0,1/3)--(1,1/3)(0,2/3)--(1,2/3);
				\draw[thick, blue!70!black] (0,0)--(9/32,1)(1/32,0)--(10/32,1)(2/32,0)--(11/32,1)(3/32,0)--(12/32,1)(4/32,0)--(13/32,1)(5/32,0)--(14/32,1)(6/32,0)--(15/32,1)(7/32,0)--(16/32,1)(8/32,0)--(17/32,1)(9/32,0)--(18/32,1)(10/32,0)--(19/32,1)(11/32,0)--(20/32,1)(12/32,0)--(21/32,1)(13/32,0)--(22/32,1)(14/32,0)--(23/32,1)(15/32,0)--(24/32,1)(16/32,0)--(25/32,1)(17/32,0)--(26/32,1)(18/32,0)--(27/32,1)(19/32,0)--(28/32,1)(20/32,0)--(29/32,1)(21/32,0)--(30/32,1)(22/32,0)--(31/32,1)(23/32,0)--(32/32,1)(0,1/9)--(8/32,1)(0,2/9)--(7/32,1)(0,1/3)--(6/32,1)(0,4/9)--(5/32,1)(0,5/9)--(4/32,1)(0,6/9)--(3/32,1)(0,7/9)--(2/32,1)(0,8/9)--(1/32,1)(24/32,0)--(1,8/9)(25/32,0)--(1,7/9)(26/32,0)--(1,6/9)(27/32,0)--(1,5/9)(28/32,0)--(1,4/9)(29/32,0)--(1,3/9)(30/32,0)--(1,2/9)(31/32,0)--(1,1/9);
				\draw[ultra thick, red!70!black] (0,2/9)--(4/32,2/3)(4/32,1/3)--(10/32,1)(10/32,0)--(16/32,2/3)(16/32,2/9)--(20/32,2/3)(20/32,1/3)--(26/32,1)(26/32,0)--(1,2/3);
			\end{tikzpicture}
		}
		\caption{\label{f:0010010} 
			Left picture: a plot of the graphs of $F^{8}_\eta$  for $(M,N) = (3,3)$ and $\eta = (10010010)$. The red graph is the graph of $f^{8}_\omega$ for $\omega = (20020020)$. Right picture: a plot of the graphs of $F^{7}_\eta$  for $(M,N) = (3,2)$ and $\eta = (0010010)$. The red graph is the graph of $f^{7}_\omega$ for $\omega = (0020030)$.}
	\end{center}	
\end{figure}

\vskip .2cm
Before we formulate and prove the result that provides the last part of Theorem~\ref{t:main}, we first focus on stationary measures for the iterated function system generated by the two-point maps. We will give two results. Firstly, for multiplicatively dependent $M, N$ we provide an explicit expression for a stationary measure $m^{(2)}$ that is absolutely continuous with respect to Lebesgue and has full topological support and that, contrary to Theorem~\ref{t:2point}, is a finite measure. Figure~\ref{f:density2} shows a numerical approximation of the density function of this stationary measure for $p_0=1/2$ and $(M,N) = (3,9)$. Secondly, we prove the existence of such a measure for all pairs $(M,N)$ without identifying an explicit expression. The proof of the second result will again run into the difficulties caused by the discontinuities of $f_0$, see Figure~\ref{f:0010010} that includes a plot of the graphs of $F^{7}_\eta$ with $F_0,F_1$ introduced in \eqref{e:F0}, \eqref{e:F1}.

\begin{figure}[!ht]
\begin{center}
		\includegraphics[height=6cm,width=6cm]{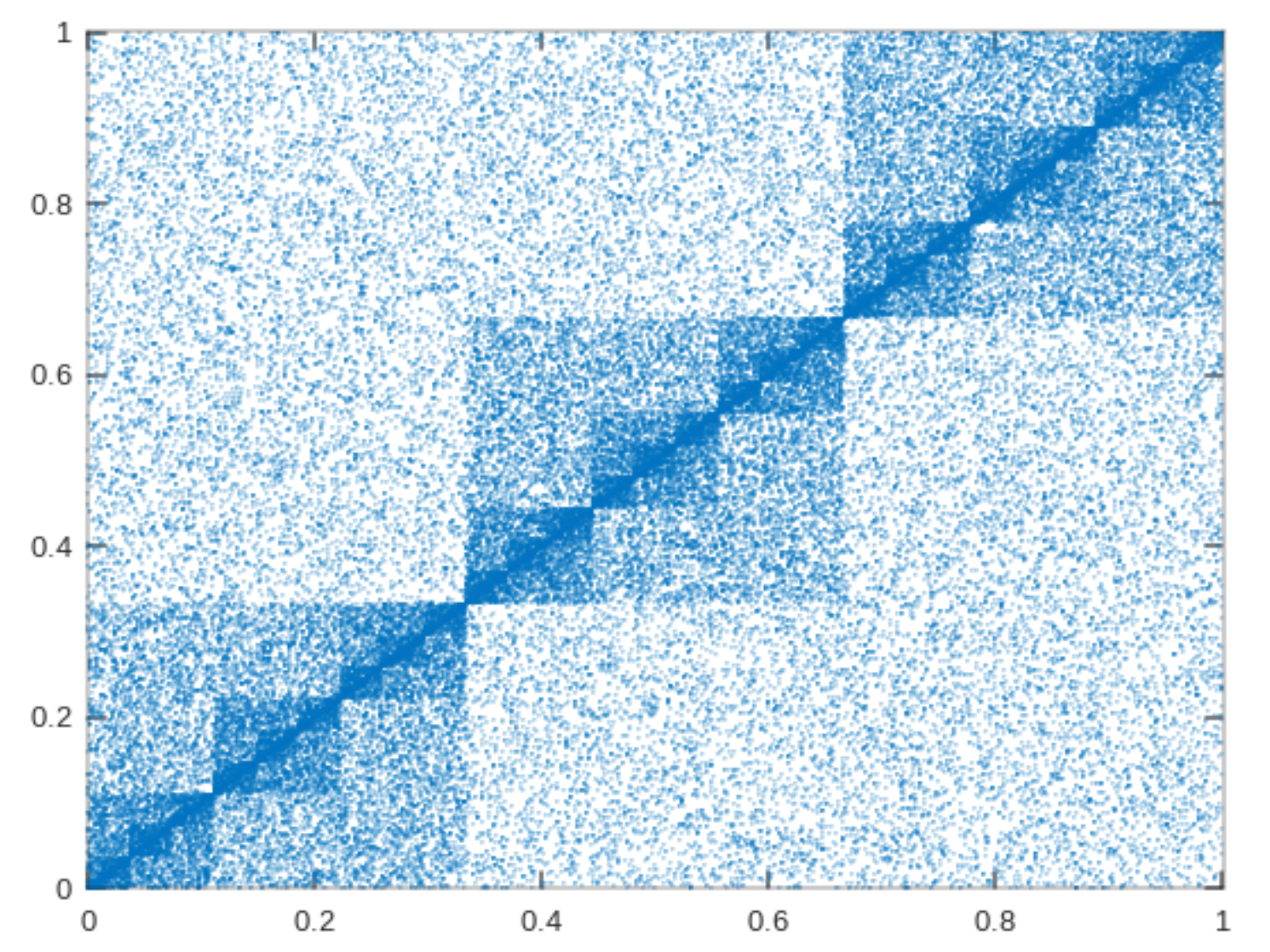}
		\caption{\label{f:density2} A plot of the stationary distribution for the two-point motion by a numerically computed histogram of an orbit, for $p_0=1/2$ and $(M,N) = (3,9)$.
		}
	\end{center}	
\end{figure}

If $N = \kappa^{k}$ and  $M =  \kappa^\ell$, then $L_{p_0} > 0$ reads
$p_0  k \ln (\kappa) - (1-p_0) \ell \ln(\kappa) > 0$. We will use that this implies
\begin{equation}\label{q:conspos}
\frac{\ell}{k+\ell}  < p_0 < 1.
\end{equation}

\begin{theorem}\label{t:stationary-multdep}
	Consider $L_{p_0} >0$ and $(M,N)$ with $N,M$ multiplicatively dependent:  $N = \kappa^{k}$ and  $M =  \kappa^\ell$.
	Then the  iterated function system generated by the two-point maps $f^{(2)}_i$, $0\le i \le M$, admits an absolutely continuous  stationary measure $m^{(2)}$
	of the form
	\begin{align*}
		m^{(2)} &= \sum_{h=0}^\infty  b_h  \sum_{j=0}^{\kappa^h-1}  \kappa^h  \left.\lambda\right|_{ [j/\kappa^h,(j+1)/\kappa^h )^2},
	\end{align*}
	with $b_h$ satisfying the recurrence equation \[b_{j+k+\ell} =  \frac{1}{p_0} b_{j+\ell} - \frac{1-p_0}{p_0} b_{j}, \quad j \ge 0,\] 
	and a suitable initial condition on $b_0,\ldots,b_{\ell-1}$.
	
	Moreover, with $\nu_1$ being the unique real solution in $(0,1)$  to
	$p_0 \zeta^{k+\ell} - \zeta + 1-p_0 = 0$,  
	\[\lim_{h\to \infty} b_h / \nu_1^h\] exists and is a positive number.
\end{theorem}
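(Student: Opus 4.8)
\emph{Proof proposal.} The plan is to carry the strategy of the proof of Theorem~\ref{t:2point} over to the positive Lyapunov exponent regime and then to read off the tail behaviour of $(b_h)$ from a generating function. First, the computation in the proof of Theorem~\ref{t:2point} of the action of the push-forward operator $\mathcal P=p_0(f_0^{(2)})_*+\sum_{j=1}^{M}\frac{1-p_0}{M}(f_j^{(2)})_*$ on measures of the form $\sum_h b_h\sum_j\kappa^h\,\lambda|_{[j/\kappa^h,(j+1)/\kappa^h)^2}$ does not involve the sign of $L_{p_0}$: one identifies $\mathcal P$, restricted to this family, with the push-forward operator of the random walk $R$ on $\mathbb N_0$ that makes a step $+\ell$ with probability $1-p_0$ (a contracting map is applied) and a step $-k$, reflected at $0$, with probability $p_0$ (the expanding map $f_0$ is applied). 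The equation $\mathcal P m^{(2)}=m^{(2)}$ then says exactly that $(b_h)$ is an $R$-invariant measure, i.e.\ it is the balance system consisting of the recurrence $b_{j+k+\ell}=\frac1{p_0}b_{j+\ell}-\frac{1-p_0}{p_0}b_j$ for $j\ge 0$ together with the reflection equations at $h<\ell$, the latter being the announced initial condition on $b_0,\dots,b_{\ell-1}$.

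Second, I would obtain existence. Since $L_{p_0}>0$ is equivalent to $p_0k>(1-p_0)\ell$ (cf.\ \eqref{q:conspos}), the walk $R$ has negative drift $(1-p_0)\ell-p_0k=-L_{p_0}/\ln\kappa<0$; as $\gcd(k,\ell)=1$ it is irreducible and aperiodic, and bounded jumps together with negative drift yield positive recurrence (Foster's criterion with Lyapunov function $h\mapsto h$, or the classical dichotomy for reflected random walks). Hence $R$ has a unique stationary probability distribution $\pi=(\pi_h)_{h\ge 0}$, with $\pi_h>0$ for all $h$. Taking $b_h=\pi_h$ produces $m^{(2)}$: it is absolutely continuous by construction, finite with $m^{(2)}([0,1)^2)=\sum_h b_h=1$, and of full topological support because $b_0=\pi_0>0$ forces $m^{(2)}\ge b_0\,\lambda$ on $[0,1)^2$; and $(b_h)$ solves the recurrence and the initial condition, these being the $R$-balance equations.

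Third, I would localise the roots of the characteristic polynomial $\chi(z)=p_0z^{k+\ell}-z^{\ell}+(1-p_0)$ of the recurrence. The real function $g(r)=p_0r^{k}+(1-p_0)r^{-\ell}$ on $(0,\infty)$ is strictly convex with $g(1)=1$ and $g'(1)=p_0k-(1-p_0)\ell>0$, so its minimiser lies in $(0,1)$ and $g(r)=1$ has exactly the two solutions $1$ and a point $\nu_1\in(0,1)$; thus $\nu_1$ is the unique root of $\chi$ in $(0,1)$, and it is simple since $\nu_1$ lies strictly to the left of the minimiser of $g$, so $\chi'(\nu_1)\neq0$. For any root $\zeta$ of $\chi$ one has $1=|p_0\zeta^{k}+(1-p_0)\zeta^{-\ell}|\le g(|\zeta|)$, hence $g(|\zeta|)\ge1$, hence $|\zeta|\le\nu_1$ or $|\zeta|\ge1$ — so there are no roots in the annulus $\nu_1<|\zeta|<1$ — and equality in the triangle inequality there forces $\zeta^{k}$ and $\zeta^{-\ell}$ to be positively aligned, which by $\gcd(k,\ell)=1$ forces $\zeta$ to be positive real; so $\nu_1$ is the only root of modulus $\nu_1$ and $1$ is the only root of modulus $1$.

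Finally, set $B(z)=\sum_{h\ge0}b_hz^{h}$, analytic on $\{|z|\le1\}$ since $\sum_h b_h<\infty$. Summing the balance equations against $z^h$ gives $B(z)=N(z)\big/\bigl(-\tilde\chi(z)\bigr)$, where $\tilde\chi(z)=z^{k+\ell}\chi(1/z)$ is the reciprocal polynomial of $\chi$ and $N$ is an explicit polynomial of degree $\le k$. Because $B$ is analytic on $\{|z|\le1\}$, every root of $\tilde\chi$ of modulus $\le1$ is cancelled by $N$; in particular $z=1$ cancels, which is also visible directly from the reflection equation $b_0=p_0(b_0+\cdots+b_k)$. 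By contrast $z=1/\nu_1$ — a simple root of $\tilde\chi$ of modulus $>1$, and by the third step the root of $\tilde\chi$ of smallest modulus exceeding $1$ — does not cancel: a short computation using $b_0=p_0(b_0+\cdots+b_k)$ gives $N(1/\nu_1)=\nu_1^{-k}p_0\sum_{i=0}^{k-1}b_i(1-\nu_1^{k-i})>0$, the positivity coming from $b_0=\pi_0>0$ and $0<\nu_1<1$. Hence $B$ is meromorphic on a disc of radius $>1/\nu_1$ with a single, simple pole at $1/\nu_1$, so $B(z)=\frac{A'}{1/\nu_1-z}+(\text{analytic on a larger disc})$ and $b_h=A'\nu_1^{h+1}+O(\rho^{h})$ for some $\rho\in(0,\nu_1)$; therefore $\lim_{h\to\infty}b_h/\nu_1^{h}=A'\nu_1$, which is positive since $b_h\ge0$, $\nu_1>0$ and $A'\neq0$. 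The main obstacle is precisely this non-vanishing of the residue, i.e.\ ruling out that $b_h$ decays strictly faster than $\nu_1^{h}$; the argument above settles it via the explicit positivity $N(1/\nu_1)>0$, which uses only that the stationary law of the positive-recurrent irreducible walk $R$ charges the state $0$, and alternatively one could quote the classical theorem on the geometric tail of the stationary distribution of a positive-recurrent reflected random walk with bounded increments. Everything else — the convexity and triangle-inequality root localisation and the generating-function identity — is routine.
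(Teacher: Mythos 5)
Your proposal is correct and reaches the same conclusion as the paper, but two of the key steps are carried out by genuinely different methods, both of which are sound.

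First, to localise the roots of $\chi(\zeta)=p_0\zeta^{k+\ell}-\zeta^{\ell}+(1-p_0)$, the paper works directly in the complex plane, comparing the two circles $S_{p_0r^{k+\ell}}(-p_0)$ and $S_{r^{\ell}}(-1)$ and finding a tangency radius $\hat r=\nu_1$, together with a continuity-in-$p_0$ argument to count roots inside and outside the unit disc. You instead divide $\chi(\zeta)=0$ by $\zeta^\ell$ and study the strictly convex real function $g(r)=p_0r^{k}+(1-p_0)r^{-\ell}$: convexity with $g(1)=1$, $g'(1)>0$ pins down the unique second solution $\nu_1\in(0,1)$; the inequality $g(|\zeta|)\ge 1$ together with the equality case of the triangle inequality and $\gcd(k,\ell)=1$ shows $\nu_1$ and $1$ are the only roots on their circles and there is no root in the annulus $\nu_1<|\zeta|<1$. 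This is slightly less information than the paper's full eigenvalue count (you don't separate $k-1$ outside from $\ell$ inside), but it is exactly what the generating-function argument requires: that $1/\nu_1$ is the unique root of $\tilde\chi(z)=z^{k+\ell}\chi(1/z)=(1-p_0)z^{k+\ell}-z^{k}+p_0$ of smallest modulus strictly exceeding $1$, and that it is simple.

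Second, to show $\lim_h b_h/\nu_1^h$ is a positive number, the paper invokes a linear-algebra version of Perron--Frobenius: the solution must lie in the sum of generalised eigenspaces of the contracting eigenvalues, and the initial vector cannot lie in the range of $A-\nu_1$ since the iterates would then fail to remain nonnegative. You instead write down the generating function $B(z)=\sum_h b_hz^h$, derive $B(z)\bigl(-\tilde\chi(z)\bigr)=N(z)$ with $N(z)=p_0\sum_{i=0}^{k}b_i(z^{k}-z^{i})$ (degree exactly $k$), observe $N(1)=0$ and that analyticity of $B$ on the closed unit disc forces $N$ to cancel all roots of $\tilde\chi$ there, and then compute $N(1/\nu_1)=p_0\nu_1^{-k}\sum_{i=0}^{k-1}b_i(1-\nu_1^{k-i})>0$ because $b_0=\pi_0>0$ and $0<\nu_1<1$. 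This is a more explicit and self-contained verification of the residue non-vanishing; it makes transparent that the positivity of the residue comes from irreducibility and positive recurrence of the reflected walk (which forces $\pi_0>0$), whereas the paper's justification ("otherwise $b_h$ can not be positive for all $h$") is left more implicit. Your explicit formula for $N$ also buys a concrete expression for the residue, which could be useful if one wanted numerical values.

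The remaining ingredients — identifying $\mathcal P$ with the push-forward operator of the reflected random walk with steps $-k$ (probability $p_0$) and $+\ell$ (probability $1-p_0$), and deducing existence of the stationary probability from positive recurrence — match the paper, and your Foster/Lyapunov phrasing of positive recurrence is an acceptable alternative to the paper's citation. Your observation that $b_0>0$ forces $m^{(2)}\ge b_0\lambda$ and hence full support is a clean bonus the paper defers to a separate theorem.
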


\begin{proof}
We take the setup of the proof of Theorem~\ref{t:2point}, which we  briefly repeat. We look for an invariant measure $m^{(2)}$ of the form $m^{(2)} = \sum_{h=0}^\infty m_h$ with		
\begin{align} \label{e:mtob}
	m_h &= b_h \sum_{j=0}^{\kappa^h-1}  \kappa^h  \left.\lambda\right|_{ [j/\kappa^h,(j+1)/\kappa^h )^2}.
\end{align}			
Consider the push-forward map $\mathcal P$ of measures from \eqref{e:Pnull}.
A measure of the sought for form $\sum_{h=0}^\infty m_h$ with $m_h$ as in \eqref{e:mtob} is determined by the sequence of numbers $(b_i)_{i\in\mathbb{N}}$. 
The push-forward map $\mathcal{P}$ from \eqref{e:Pnull} induces a map 
$\mathcal{Q}: \mathcal{M}_1 (\mathbb{N} ) \to \mathcal{M}_1 (\mathbb{N})$, where
\[
\mathcal{M}_1  (\mathbb{N}) = \left\{  (b_i)_{i\in\mathbb{N}} \; ; \; b_i \ge 0,   \sum_{i=0}^\infty  b_i = 1  \right\}.
\]

As noted in the proof of Theorem~\ref{t:2point},  $\mathcal{Q}$ is the push-forward operator associated to the random walk on $\mathbb{N}$ given by
\begin{align} \label{e:rwx}
	x_{n+1} = \left\{  
	\begin{array}{ll}   \max\{ 0 , x_n - k \}, &   \omega_{n+1} = 0, 
		\\     
		x_n + \ell, & \omega_{n+1} \in \{1,\ldots,M\}.
	\end{array}            
	\right.
\end{align}
As $L_{p_0} > 0$, this is a positive recurrent random walk. Hence $\mathcal{Q}$ admits a fixed point in $\mathcal{M}_1 (\mathbb{N})$.
	
Having established the existence of a fixed point $\mathcal{Q}(\mathbf{b}) = \mathbf{b}$, 	
we continue with calculations that will result in expressions for $b_h$.
The stationary measure $m^{(2)}$ satisfies $\mathcal P m^{(2)} = m^{(2)}$. For the coefficients $b_h$, $h \ge 0$, this gives equations
\begin{align*}
	b_k &= \frac{1-p_0}{p_0} b_0 - b_1- \cdots - b_{k-1},
	\\
	b_{k+1} &=   \frac{1}{p_0} b_{1},
	\\
	\vdots \; &=  \quad\vdots
	\\
	b_{k+\ell-1} &=   \frac{1}{p_0} b_{\ell-1},
	\\
	b_{k+\ell} &= \frac{1}{p_0}  b_\ell - \frac{1-p_0}{p_0} b_0,
	\\
	\vdots \; &=  \quad\vdots
	\\
	b_{j+k+\ell} &=  \frac{1}{p_0} b_{j+\ell} - \frac{1-p_0}{p_0} b_{j},
	\\
	\vdots  \; &= \quad \vdots	
\end{align*}	
The recurrence equation $b_{j+k+\ell} =  \frac{1}{p_0} b_{j+\ell} - \frac{1-p_0}{p_0} b_{j}$ that appears here,  is equivalent to the linear system
\begin{align*}% \label{e:A}
	\begin{pmatrix} b_{h+1}  \\ b_{h+2} \\ \vdots \\  b_{h+\ell+1} \\ \vdots \\ b_{h+k+\ell-1} \\ b_{h+k+\ell} \end{pmatrix} &= 
	\begin{pmatrix} 
			0      & 1      & 0            & \cdots&0      & \cdots & 0\\
			0      & 0      & 1            & \cdots&0       & \cdots & 0\\
			\vdots & \vdots & \vdots  & \ddots&\vdots      & \ddots & 0\\
			0 &       0 & 0  & \cdots  &1      & \cdots & 0\\
			\vdots & \vdots & \vdots  & \ddots&\vdots      & \ddots & 0\\
			0      & 0      & 0          & \cdots&0       & \cdots & 1\\
			-\frac{1-p_0}{p_0}& 0& 0      & \cdots     &   \frac{1}{p_0} & \cdots & 0
	\end{pmatrix}
	\begin{pmatrix} b_{h}  \\ b_{h+1}  \\ \vdots \\ b_{h+\ell} \\ \vdots \\ b_{h+k+\ell-2} \\ b_{h+k+\ell-1} 
	\end{pmatrix},
\end{align*}
$h \ge 0$.  Denote the above matrix by $A$. Its characteristic equation is $p_0 \zeta^{\ell+k} - \zeta^{\ell} + 1-p_0 = 0$, so that
\begin{align}\label{e:charzeta}
	p_0 (\zeta^{k+\ell}  -1) &= \zeta^{\ell}-1.
\end{align}
	
We claim that for any $p_0 < 1$ the zeros of the characteristic equation, thus the eigenvalues of $A$,  are as follows.
\begin{enumerate} 
	\item $A$ has a single eigenvalue at $1$, there are $k-1$ eigenvalues outside the unit circle and there are $\ell$ eigenvalues inside the unit circle.
Write them as
	\[
\eta_0 = 1, \qquad
\eta_1,\ldots, \eta_{k-1} \in \{ z\in\mathbb{C} \; ; \; |z| > 1\},
\qquad
\nu_1,\ldots,\nu_\ell  \in \{ z\in\mathbb{C} \; ; \; |z| < 1\};
\]
 \item
The eigenvalue with largest modulus among $\{\nu_1,\ldots,\nu_\ell\}$ is single, real and positive.
Let $\nu_1 \in (0,1)$ be this eigenvalue.
\end{enumerate}
	
To prove the statements on the eigenvalues, 
write $S_r (a) = \{ z \in \mathbb{C} \; ; \; |z-a| = r\}$ for the circle in the complex plane of radius $r$ and center $a$.
Consider $\zeta \in \mathbb{C}$ with  $|\zeta|=r$. Then  
\[ 
p_0(\zeta^{k+\ell}-1) \in  S_{p_0 r^{k+\ell}} (-p_0), \qquad 
\zeta^{\ell}-1 \in S_{r^\ell}(-1).
\]
Solutions to \eqref{e:charzeta} with $|\zeta|=r$ can occur only if these two circles
$ S_{p_0 r^{k+\ell}} (-p_0)$ and $ S_{r^\ell}(-1)$
intersect, so we consider their mutual position.

First consider the situation that $p_0=1$. For $r=1$ the resulting circles $S_{p_0} (-p_0)$ and $S_1 (-1)$ are identical and for any other value of $r$ the circles do not intersect. The solutions to \eqref{e:charzeta} are therefore given by 
$\{e^{2 \pi (j/k) i}, 0 \le  j < k \}$. Together with the solution at $0$, with multiplicity $\ell$, these are the solutions to \eqref{e:charzeta} with $p_0=1$.

Now assume $\frac{\ell}{k+\ell} < p_0 < 1$, see \eqref{q:conspos}. For $r=1$, $S_{p_0} (-p_0)$ and $S_1 (-1)$ intersect only at the origin.
As $k$ and $\ell$ are relatively prime, and therefore also $k + \ell$ and $\ell$
are relatively prime, $\zeta=1$ is the unique solution with $|\zeta|=1$ 
to \eqref{e:charzeta}.

Now consider $\frac{\ell}{k+\ell} < p_0 < 1$ with $r<1$.
Write $h_0 (r) =   p_0 (r^{k+\ell}-1)$ and $h_1 (r) =  r^\ell-1$.
Then $ -1 = h_1 (0) < h_0 (0) = -p_0$, while $ 0 = h_1(1)=h_0(1)$.
The function $h_0' (r) / h_1'(r) = p_0 \frac{k+\ell}{\ell}  r^k$ is monotone increasing on $[0,1]$,
from $0$ at $r=0$ to $ p_0 \frac{k+\ell}{\ell} > 1$ at $r=1$.
There is therefore a unique solution $\hat{r} = \hat{r}(p_0)$ in $(0,1)$ to $h_0(r) = h_1(r)$, see Figure~\ref{f:circles} for an illustration. Moreover, $\hat{r} \to 0$ as $p_0\to 1$ and $\hat{r} \to 1$ as $p_0 \to \frac{\ell}{k+\ell}$. 

This means that $ S_{p_0 {\hat{r}}^{k+\ell}} (-p_0)$ is tangent to 
$S_{{\hat{r}}^\ell}(-1)$ (at $\hat{r}^\ell-1$).  For $\hat{r} < r < 1$,  
$S_{p_0 r^{k+\ell}} (-p_0) \cap  S_{r^\ell}(-1) = \emptyset$.
We conclude that the real solution $\nu_1 = \hat{r}$ is the solution to \eqref{e:charzeta} of largest modulus of solutions inside the unit disc. Again as $k$ and $\ell$ are relatively prime, it is an isolated solution, and other solutions inside the unit disc have smaller modulus.

Since eigenvalues depend continuously on $p_0$, and $1$ is an isolated eigenvalue and the only eigenvalue on the unit circle for $\frac{\ell}{k+\ell} < p_0 < 1$,
eigenvalues can not cross the unit circle when varying $p_0$.
So for any $\frac{\ell}{k+\ell} < p_0 < 1$,  there are $\ell$ eigenvalues inside
the circle of radius $\hat{r}(p_0)$, there is an isolated eigenvalue at $1$, and the remaining
$k-1$ eigenvalues  eigenvalues  lie outside the unit circle.
This concludes the proof of the statements on the solutions to \eqref{e:charzeta}.
	
A solution to the equations for $b_h$ is determined by an 
initial vector $b_0,\ldots,b_{k+\ell-1}$.  For higher indices $h$, $b_h$ is given by the recurrence equation. 
For a solution with $b_h$ converging to $0$ as $h \to \infty$, we need the  initial vector to be contained in the sum of the (generalized) eigenspaces corresponding to the
contracting eigenvalues $\nu_1,\ldots,\nu_\ell$.			
Recall that we already know there is such a solution to the equations.

The initial condition can not be contained in the range of $(A - \nu_1)$ (thus must have a component  in the direction of the eigenvector corresponding to the unique real positive eigenvalue $\nu_1$, when decomposing in a basis of generalized eigenvectors),
since otherwise $b_h$ can not be positive for all $h$.  As $\nu_1$ is the eigenvalue of largest modulus of all eigenvalues inside the unit circle, this implies
$b_h / \nu_1^h$ converges to a positive value as $h \to \infty$.
\end{proof}

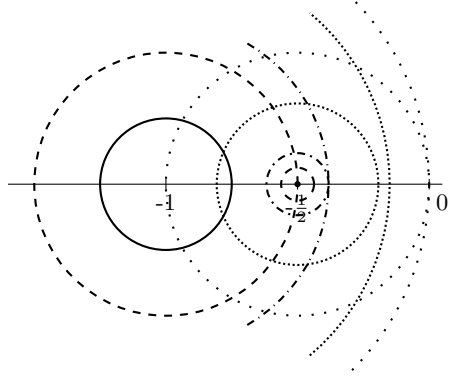
\begin{figure}[h]
\begin{center}
\begin{tikzpicture}[scale=3.5]
\draw(.05,0)node[below]{\footnotesize $0$}--(-1/2,0)node[below]{\footnotesize -$\frac12$}--(-1,0)node[below]{\footnotesize -$1$}--(-1.6,0);
\draw(0,-.02)--(0,.02);
\draw(-1,-.02)--(-1,.02)(-.5,-.02)--(-.5,.02);
\draw[thick] (-1,0) circle (.25cm);
\draw[thick, dashed] (-.5,0) circle (.0625cm);

\draw[thick, dashed] (-1,0) circle (.5cm);
\draw[thick] (-.5,0) circle (.0078cm);

%\draw[thick, dash dot] (-1,0) circle (.618cm);
\draw[thick, dash dot] (-.5,0) circle (.118cm);

%\draw[thick, densely dotted] (-1,0) circle (.85cm);
\draw[thick, densely dotted] (-.5,0) circle (.307cm);
\draw[thick, loosely dotted] (-.5,0) circle (.5cm);

\draw [dash dot,thick, domain=-60:60] plot ({-1+.618*cos(\x)}, {.618*sin(\x)});
\draw [densely dotted,thick, domain=-50:50] plot ({-1+.85*cos(\x)}, {.85*sin(\x)});
\draw [loosely dotted,thick, domain=-45:45] plot ({-1+cos(\x)}, {sin(\x)});
\end{tikzpicture}
\label{f:circles}
\caption{The circles $S_{p_0 r^{k+l}}(-p_0)$ and $S_{r^l}(-1)$ for the values $p_0 = -\frac12$, $N=4$, $M=2$ and $r=\frac14$ (solid), $r=\frac12$ (dashed), $r=\hat r = \frac{\sqrt 5-1}{2}$ (dash dotted), $r= \frac{85}{100}$ (densely dotted) and $r=1$ (loosely dotted).}
\end{center}
\end{figure}

\begin{remark}
In the above proof we concluded the existence of a fixed point of 
$\mathcal{Q}$  in $\mathcal{M}_1 (\mathbb{N})$ from positive recurrence of \eqref{e:rwx}.
Here we connect to a different approach.
Consider the diffeomorphism $h : \mathbb{R} \to (0,1)$ given by
$
h(x) = \frac{e^x}{1+e^x}.
$
The random walk \eqref{e:rwx} considered on $\mathbb{R}$ is topologically conjugate through $h$ with the iterated function system
\begin{align}\label{e:rwy}
y_{n+1} = \left\{  \begin{array}{ll}   \max\left\{ 0 , \frac{ e^{-\ell } y_n}{1 + (e^{-\ell}- 1 ) y_n} \right\}, &   \omega_{n+1} = 0, 
		\\    \frac{ e^{k} y_n}{1 + (e^{k}- 1 ) y_n}   , & \omega_{n+1}=1
	\end{array}            
	\right.
\end{align}
on $(0,1)$.  By continuous extension we have $1$ as a common fixed point for the two maps generating \eqref{e:rwy}.
Moreover, the iterated function systems has a positive Lyapunov exponent $L_{p_0}$ at $1$. 
Following the reasoning of \cite[Lemma~3.2]{MR3600645} (see also \cite[Proposition~4.1]{MR3567274};  
it amounts to following a Krylov-Bogolyubov procedure %(see e.g. \cite{MR2723325}) 
on a suitable closed class of measures),
the iterated function system \eqref{e:rwy}
admits a stationary measure supported on $h(\mathbb{N})$. Hence $\mathcal{Q}$ admits a fixed point in $\mathcal{M}_1 (\mathbb{N})$.
\end{remark}	

\begin{example}
	We work out the general result of Theorem~\ref{t:stationary-multdep} in two special cases.
	\begin{enumerate}
		
		\item
		Pairs  $(M,N)$ with $N = M^k$ correspond to $\kappa = M$ and $\ell=1$. 
		By Theorem~\ref{t:stationary-multdep},
		the iterated function system generated by the two-point maps $f^{(2)}_i$, $0\le i \le M$, admits an absolutely continuous  stationary measure $m^{(2)}$
		of the form
		\[
		m^{(2)} = \sum_{h=0}^\infty  b_h  \sum_{j=0}^{M^h-1}  M^h  \left.\lambda\right|_{ [j/M^h,(j+1)/M^h )^2}.
		\]
		with $b_h = \nu_1^h$ for
		$\nu_1$ the unique real solution in $(0,1)$ to
		\[
		p_0\zeta^{k+1} - \zeta + 1-p_0= 0.
		\]
		For $k=1$ the solution $\nu_1$ is given by
		$
		\nu_1 = \frac{1-p_0}{p_0}.
		$
		For $k=2$ it is given by 
		$
		\nu_1 = -\frac{1}{2} + \frac{1}{2} \sqrt{ 1 + 4\frac{1-p_0}{p_0} }.
		$
		
		\item	
		The second special case we consider is of pairs $(M,N)$ with $M = N^{\ell}$.
		This corresponds to $\kappa = N$ and $k=1$. 
		Then the  iterated function system generated by the two-point maps $f^{(2)}_i$, $0\le i \le M$, admits an absolutely continuous  stationary measure $m^{(2)}$
		of the form
		\[
		m^{(2)} = \sum_{h=0}^\infty  b_h  \sum_{j=0}^{N^h-1}  N^h  \left.\lambda\right|_{ [j/N^h,(j+1)/N^h )^2}
		\]
		with
		\begin{align*}
			b_0 &= 1,
			\\
			b_j &=  (1-p_0)/p_0^j, &  1 \le j \le \ell,
			\\
			b_{j+1} &=  \frac{1}{p_0} b_j - \frac{1-p_0}{p_0} b_{j-\ell}, & j \ge \ell.
		\end{align*}	
	We have $b_h \sim \nu_1^h$, where $\nu_1$ is the unique solution in $(0,1)$ to
$p_0 \zeta^{\ell+1} - \zeta^\ell + 1-p_0=0$. For  $\ell=1$, this gives $\nu_1=\frac{1-p_0}{p_0}$. For $\ell=2$, $\nu_1= \frac{1-p_0 + \sqrt{(1-p_0)(1+3p_0)}}{2p_0}$.		
	\end{enumerate}
\end{example}	

\begin{remark}\label{r:boundeddensity}
	Writing 	$m^{(2)}$ obtained in Theorem~\ref{t:stationary-multdep} as \begin{align*}
		m^{(2)} &= \sum_{h=0}^\infty  b_h \kappa^h \sum_{j=0}^{\kappa^h-1}  \left.\lambda\right|_{ [j/\kappa^h,(j+1)/\kappa^h )^2},
	\end{align*}
and noting $b_h \sim \nu_1^h$, it is clear that its density is bounded  if $\nu_1  \kappa < 1$.
\end{remark}	

The two-point maps $f^{(2)}_i$ are examples of Jablonski maps \cite{MR744422}.	
In the literature, see \cite{MR1232960,Hsi08,MR2110096}, it is proved that random Jablonski maps admit an absolutely continuous
stationary measure under an expansion on average condition. In our setting this gives
that the iterated function system generated by $\{f^{(2)}_i\}$, $0\le i \le M$,
admits an absolutely continuous stationary measure if $\frac{p_0}{N} + (1-p_0)M < 1$.
Under this condition the stationary measure has bounded Tonelli variation.
We apply \cite{MR4342141} to get  an absolutely continuous stationary measure under the condition $L_{p_0} > 0$. This may not have bounded Tonelli variation, compare also
Remark~\ref{r:boundeddensity} and \cite[Section~4]{MR722776}.
 In contrast to Theorem~\ref{t:stationary-multdep}, here we do not have an explicit expression for the density function.

\begin{theorem}\label{t:stat2}
	Consider $L_{p_0} > 0$. The iterated function system on $[0,1)^2$ generated by $f^{(2)}_i$, $0\le i \le M$, admits an absolutely continuous stationary probability measure $m^{(2)}$. Furthermore, $m^{(2)}$ has full topological support.
\end{theorem}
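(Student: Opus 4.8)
The plan is to produce the stationary measure by recognizing the two-point system $\{f^{(2)}_i \, ; \, 0\le i \le M\}$ as a random system of Jablonski maps and quoting the existence result of \cite{MR4342141}, and then to upgrade this to full topological support by exploiting the transfer operator identity satisfied by the density of $m^{(2)}$. For existence, I would first record the piecewise-affine rectangular structure of the generators: on each square $[j/N,(j+1)/N)\times[j'/N,(j'+1)/N)$ the map $f^{(2)}_0$ equals $(x,y)\mapsto (Nx-j,\,Ny-j')$, a product of two full-branch expanding maps, so $f^{(2)}_0$ is a Jablonski map in the classical sense, while for $1\le i\le M$ the map $f^{(2)}_i$ is the affine contraction $(x,y)\mapsto((x+i-1)/M,(y+i-1)/M)$ onto the diagonal square $[(i-1)/M,i/M)^2$. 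The randomized Jablonski framework of \cite{MR4342141} (compare \cite{MR1232960,MR2110096,Hsi08}) furnishes an absolutely continuous stationary probability measure under an average-expansion hypothesis of Lyapunov type, which for our probability vector $\mathbf p$ from \eqref{q:vectorp} is exactly the requirement $L_{p_0}>0$ with $L_{p_0}$ as in \eqref{e:Lp0}. I would check that our data meet the standing assumptions of \cite{MR4342141} — measurability of the randomness, the rectangular piecewise-affine form of the branches, and the expansion-on-average bound encoded by $L_{p_0}>0$, notwithstanding that the branches $f_1,\ldots,f_M$ are individually contracting — and thereby obtain $m^{(2)}\ll\lambda$ with $m^{(2)}([0,1)^2)=1$. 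This is where essentially all the content lies, and I expect the main obstacle to be precisely this verification: fitting the contracting branches and our normalization into the hypotheses under which \cite{MR4342141} is phrased.

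For full topological support, write $m^{(2)}=\rho\,d\lambda$ with $\rho\ge 0$ and $\int\rho\,d\lambda=1$, and note $\lambda(\{\rho>0\})>0$. Computing push-forwards of absolutely continuous measures, stationarity $m^{(2)}=p_0(f^{(2)}_0)_\ast m^{(2)}+\tfrac{1-p_0}{M}\sum_{i=1}^M (f^{(2)}_i)_\ast m^{(2)}$ yields, for $\lambda$-almost every $z\in[0,1)^2$,
\[ \rho(z)=\frac{p_0}{N^2}\sum_{w\,:\,f^{(2)}_0(w)=z}\rho(w)\;+\;(1-p_0)M\sum_{i=1}^M \rho\big((f^{(2)}_i)^{-1}(z)\big)\,\chi_{[(i-1)/M,\,i/M)^2}(z), \]
and in particular $\rho(z)\ge \frac{p_0}{N^2}\sum_{w:f^{(2)}_0(w)=z}\rho(w)$. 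Hence, writing $S=\{\rho>0\}$, if $w\in S$ and $f^{(2)}_0(w)=z$ then $\rho(z)>0$; that is, $f^{(2)}_0(S)\subseteq S$ up to a $\lambda$-null set, equivalently $S\subseteq (f^{(2)}_0)^{-1}(S)$ modulo null sets. Since $\lambda$ is invariant under $f^{(2)}_0$, we have $\lambda\big((f^{(2)}_0)^{-1}(S)\big)=\lambda(S)$, which forces $(f^{(2)}_0)^{-1}(S)=S$ mod null, so $S$ is an invariant set for $f^{(2)}_0$.

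It remains to observe that $f^{(2)}_0$ is the diagonal map $(x,y)\mapsto(Nx,Ny)$ on $[0,1)^2\cong(\mathbb R/\mathbb Z)^2$, a product of two copies of multiplication by $N$; each factor is mixing for Lebesgue measure, hence so is $f^{(2)}_0$, and in particular it is ergodic. Therefore $\lambda(S)\in\{0,1\}$, and as $\lambda(S)>0$ we conclude $\rho>0$ $\lambda$-almost everywhere. Consequently, for every nonempty open $V\subseteq[0,1)^2$ one has $\lambda(V)>0$ and $\rho>0$ a.e.\ on $V$, so $m^{(2)}(V)=\int_V\rho\,d\lambda>0$; hence $\mathrm{supp}(m^{(2)})=[0,1)^2$. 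The "mod null" manipulations above are harmless, since all sets involved ($S$, the exceptional set where the density identity fails, and its preimage under the measure-preserving $f^{(2)}_0$) are null or conull, so the only genuine difficulty is the one flagged in the first paragraph.
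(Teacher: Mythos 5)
Your proposal splits into two halves with different assessments. For the existence of the absolutely continuous stationary probability measure, the plan follows the paper's route (apply \cite{MR4342141}), but it glosses over a genuine step. That reference is phrased for skew products over an \emph{invertible} base dynamics and produces a family of random fiber measures $\mu^{(2)}_\omega$ with $(f^{(2)}_{\omega_0})_\ast\mu^{(2)}_\omega=\mu^{(2)}_{\sigma\omega}$, not directly a single stationary measure $m^{(2)}$ for the iterated function system. The paper therefore lifts to $\{0,\ldots,M\}^{\mathbb Z}\times[0,1)^2$, applies \cite[Theorem~4.2 and Remark~5]{MR4342141} there, pushes the resulting skew-product invariant measure down to the one-sided product $\Sigma\times[0,1)^2$, checks absolute continuity survives the projection, and finally invokes \cite{MR935878} to obtain the product form $\nu\times m^{(2)}$, which is what identifies $m^{(2)}$ as a stationary measure. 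You correctly identify $L_{p_0}>0$ as the expansion-on-average hypothesis and flag the verification as the crux, but the passage from random fiber measures to the IFS-stationary $m^{(2)}$ is a real step that your sketch does not account for and should be made explicit.

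For full topological support, you take a genuinely different and arguably cleaner route that moreover yields a stronger conclusion. The paper argues via a Lebesgue density point: it zooms in through $N$-adic rectangles $R^n_{ij}$, observes that $(f_0^{(2)})^{-n}(O)\cap R_{ij}^n$ has positive $m^{(2)}$-measure for large $n$, and then (implicitly, using $m^{(2)}\ge p_0^n\,\big(f_0^{(2)}\big)^n_\ast m^{(2)}$ from stationarity) gets $m^{(2)}(O)>0$. You instead derive the pointwise transfer inequality $\rho(z)\ge\frac{p_0}{N^2}\sum_{w:f_0^{(2)}(w)=z}\rho(w)$ valid $\lambda$-a.e., deduce that $S=\{\rho>0\}$ satisfies $S\subseteq(f_0^{(2)})^{-1}(S)$ mod null, upgrade this to $(f_0^{(2)})^{-1}(S)=S$ mod null by $\lambda$-invariance of $f_0^{(2)}$, and invoke ergodicity of $(x,y)\mapsto(Nx,Ny)\bmod 1$ for Lebesgue measure to get $\lambda(S)\in\{0,1\}$, hence $\rho>0$ a.e. This argument is correct (the ``mod null'' bookkeeping goes through exactly as you say, since the exceptional set where the transfer identity fails has a null preimage under the measure-preserving $f_0^{(2)}$), it uses only $p_0>0$ once absolute continuity is known, and it proves the stronger statement that the density is almost everywhere positive rather than merely that the support is all of $[0,1)^2$.
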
	

\begin{proof}
We will apply \cite{MR4342141} that considers skew product maps with an invertible base map. For this reason
we take the map $\hat{F}^{(2)}: \{0,\ldots,M\}^\mathbb{Z} \times [0,1)^2 \to \{0,\ldots,M\}^\mathbb{Z} \times [0,1)^2$ given by 
\[ \hat{F}^{(2)} (\omega,x,y) = (\sigma \omega , f^{(2)}_{\omega_0} (x,y)). \]
The base map of the skew product system, that is, $\sigma$ acting on $\{0,\ldots,M\}^\mathbb{Z}$, is invertible. The $\mathbf p$-Bernoulli measure on $\{0,\ldots,M\}^\mathbb{Z}$ with $\mathbf p$ as in \eqref{q:vectorp}, which we also denote by $\nu$ as in the one-sided case, is an ergodic invariant probability measure, so we fit the setting considered in \cite{MR4342141}.
Under the condition $L_{p_0}>0$, \cite[Theorem~4.2 and Remark~5]{MR4342141} provides a family $\mu^{(2)}_\omega$
of random absolutely continuous invariant measures on $[0,1)^2$. Invariant here means 
\[\left( f^{(2)}_{\omega_0}\right)_\ast \mu^{(2)}_\omega = \mu^{(2)}_{\sigma \omega}.\]  Furthermore, the measure $\hat{\mu}^{(2)}$ with marginal $\nu$ on $\{0,\ldots,M\}^\mathbb{Z}$ and fiber measures $\mu^{(2)}_\omega$ on $\{\omega\}\times [0,1)^2$,
is invariant under  $\hat{F}^{(2)}$.
 
Let $\hat \Pi : \{0,\ldots,M\}^\mathbb{Z} \times [0,1)^2 \to \Sigma \times [0,1)^2$
be the natural coordinate projection
\[\hat \Pi( (\omega_i)_{i\in\mathbb{Z}},x,y) = ( (\omega_i)_{i\in\mathbb{N}},x,y).\]  
Then $\mu^{(2)} = \hat \Pi_\ast \left(\hat{\mu}^{(2)}\right)$ is an invariant measure for $F^{(2)}$.
To show that  $\mu^{(2)}$ is an absolutely continuous measure, take a set  $A \subset  \Sigma \times [0,1)^2$  of zero measure for $\nu\times \lambda$. 
We wish to show that $\mu^{(2)} (A) = 0$.
Now $\hat \Pi^{-1} (A)$ has zero measure for $\nu\times \lambda$ on $\{0,\ldots,M\}^\mathbb{Z} \times [0,1)^2$. So $\hat \Pi^{-1} (A) \cap \left( \{ \omega\} \times [0,1)^2 \right)$ has zero Lebesgue measure for almost all $\omega \in \{0,\ldots,M\}^\mathbb{Z}$.  It thus has zero measure for $\mu^{(2)}_\omega$, for almost all $\omega\in \{0,\ldots,M\}^\mathbb{Z}$, by absolute continuity of $\mu^{(2)}_\omega$.	
Hence $A$ has zero measure for  $\mu^{(2)}$.

By \cite[Theorem~3.1 and Corollary~3.1]{MR935878}, $\mu^{(2)}$ is an invariant product measure,
so of the form $\mu^{(2)} = \nu \times m^{(2)}$. 	

By iterating under the expanding map $f_0^{(2)}$ 
we recognize that $m^{(2)}$ has full topological support.
Namely, take any open set $O \subset [0,1)^2$. 
Now $\left(f^{(2)}_0\right)^n$ maps rectangles \[R_{ij}^n = [i/N^n , (i+1)/N^n)\times [j/N^n,(j+1)/N^n)\]
onto $[0,1)^2$. Take a set of positive $m^{(2)}$ measure. As $m^{(2)}$ is absolutely continuous, we can take a Lebesgue density point of this set. For $n$ large and the rectangle  $R_{ij}^n$ containing this Lebesgue density point, $\left(f^{(2)}_0\right)^{-n} (O) \cap R_{ij}^n$
has positive $m^{(2)}$ measure.  The topological support of $m^{(2)}$ therefore intersects $O$.
\end{proof}

We also have the following related dynamical statement,
showing that orbits may stick close together for some iterates, but then diverge again.  Recall that $\Delta_\varepsilon$ denotes the $\varepsilon$-neighborhood $\{ (x,y) \in [0,1)^2 \; ; \; |x-y| < \varepsilon\}$ of the diagonal $\Delta$ in $[0,1)^2$. 

\begin{theorem}\label{t:divergence}
Consider $L_{p_0}>0$. % and assume $N=M^k$ for some integer $k \ge 1$. 
Let $0<t<1$. There is a set of points $(\omega,x,y)$ in $\Sigma \times [0,1)^2$ of full $\nu\times\lambda$-measure, for which
\[
P(\varepsilon) = \lim_{n\to \infty}  \frac{1}{n} \left| \{ 0\le i < n \; ; \;  \left|f^i_\omega(x) - f^i_\omega(y)\right| < \varepsilon \} \right|  
\]
exists.
Moreover,
\[
\lim_{\varepsilon\to 0}  P(\varepsilon) = 0. % m^{(2)} ( \Delta_\varepsilon  ) = 0.
\]
If $M$ and $N$ are multiplicatively dependent with $N = \kappa^k$, $M = \kappa^\ell$, then
\[
P(\varepsilon)
  \sim \varepsilon^{-\ln (\nu_1) / \ln(\kappa)},
\]
where $\nu_1$ is the unique solution in $(0,1)$ to $p_0 \zeta^{k+\ell}- \zeta +1-p_0 = 0$. 
\end{theorem}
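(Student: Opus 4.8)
The plan is to read the statement off from Birkhoff's ergodic theorem applied to the skew product $F^{(2)}(\omega,x,y)=(\sigma\omega,f^{(2)}_{\omega_0}(x,y))$ on $\Sigma\times[0,1)^2$ with the absolutely continuous stationary measure $m^{(2)}$ supplied by Theorem~\ref{t:stat2}, after first promoting $m^{(2)}$ to a measure equivalent to two-dimensional Lebesgue measure. Let $g$ be the density of $m^{(2)}$. Writing $\mathcal{P}m^{(2)}=m^{(2)}$ from \eqref{e:Pnull} as a fixed point equation for $g$, all terms on the right are non-negative, and the $f^{(2)}_0$-term equals $\tfrac{p_0}{N^2}\sum_{0\le j,j'<N}g\!\left(\tfrac{x+j}{N},\tfrac{y+j'}{N}\right)$; hence $\{g=0\}$ contains all $f^{(2)}_0$-preimages of each of its points, i.e.\ $(f^{(2)}_0)^{-1}\{g=0\}\subseteq\{g=0\}$. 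Since $f^{(2)}_0=(f_0,f_0)$ preserves $\lambda$ on $[0,1)^2$, this inclusion is an equality mod $\lambda$, so $\{g=0\}$ is $f^{(2)}_0$-invariant; and $f^{(2)}_0$, viewed as the expanding endomorphism $(x,y)\mapsto(Nx,Ny)$ of the $2$-torus, is ergodic for $\lambda$. As $g$ is not $\lambda$-a.e.\ zero, $\lambda(\{g=0\})=0$, i.e.\ $m^{(2)}\sim\lambda$ on $[0,1)^2$, so $\mu^{(2)}:=\nu\times m^{(2)}$ is $F^{(2)}$-invariant and equivalent to $\nu\times\lambda$.

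Applying Birkhoff's theorem to $\mu^{(2)}$ and the observable $(\omega,x,y)\mapsto\chi_{\Delta_\varepsilon}(x,y)$ gives, for $\mu^{(2)}$-a.e.\ and hence $\nu\times\lambda$-a.e.\ $(\omega,x,y)$, the existence of $P(\varepsilon)=\lim_n\tfrac1n|\{0\le i<n\,;\,|f^i_\omega(x)-f^i_\omega(y)|<\varepsilon\}|$; intersecting over rational $\varepsilon>0$ and sandwiching for other $\varepsilon$ by monotonicity in $\varepsilon$ yields a single full-measure set on which $P(\varepsilon)$ exists for every $\varepsilon>0$. On this set $P(\varepsilon)=\mathbb E_{\mu^{(2)}}[\chi_{\Delta_\varepsilon}\mid\mathcal I]$ for the $F^{(2)}$-invariant $\sigma$-algebra $\mathcal I$, and this equals $m^{(2)}(\Delta_\varepsilon)$ once $\mu^{(2)}$ is known to be ergodic. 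Since $\chi_{\Delta_\varepsilon}\downarrow\chi_\Delta$ as $\varepsilon\downarrow0$ and $m^{(2)}(\Delta)=0$ (because $m^{(2)}\ll\lambda$ and $\lambda(\Delta)=0$), conditional dominated convergence gives $\lim_{\varepsilon\to0}P(\varepsilon)=\mathbb E_{\mu^{(2)}}[\chi_\Delta\mid\mathcal I]=0$ a.e.

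For the sharp rate in the multiplicatively dependent case $N=\kappa^k$, $M=\kappa^\ell$ I would argue that there $\mu^{(2)}$ is ergodic — via uniqueness of the absolutely continuous stationary measure, which identifies $m^{(2)}$ with (a multiple of) the explicit measure of Theorem~\ref{t:stationary-multdep}, together with a Hopf-type argument along the lines of the proof of Theorem~\ref{t:M>Nergodic} — so that $P(\varepsilon)\sim m^{(2)}(\Delta_\varepsilon)$ up to the normalising constant. Using the form $m^{(2)}=\sum_{h\ge0}b_h\,\kappa^h\sum_{j}\left.\lambda\right|_{[j/\kappa^h,(j+1)/\kappa^h)^2}$ with $b_h\sim C\nu_1^h$, one evaluates $m^{(2)}(\Delta_\varepsilon)$ layer by layer: a level-$h$ diagonal square of side $\kappa^{-h}\le\varepsilon$ lies entirely inside $\Delta_\varepsilon$ and contributes exactly $b_h$ in total, while the layers with $\kappa^{-h}>\varepsilon$ contribute $\asymp\varepsilon\,b_h\kappa^h$. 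Summing, and using $b_h\kappa^h\asymp(\nu_1\kappa)^h$ to handle the head, the dominant term comes from the diagonal squares of side $\lesssim\varepsilon$, namely $\sum_{h\ge\log_\kappa(1/\varepsilon)}b_h\asymp\nu_1^{\log_\kappa(1/\varepsilon)}=\varepsilon^{-\ln\nu_1/\ln\kappa}$, giving $P(\varepsilon)\sim\varepsilon^{-\ln\nu_1/\ln\kappa}$ with $\nu_1\in(0,1)$ the root of $p_0\zeta^{k+\ell}-\zeta^\ell+1-p_0=0$.

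The main obstacle is the ergodicity of the two-point skew product $\mu^{(2)}$: the ergodicity results already proved concern only the one-point system $F$ on $\Sigma\times[0,1)$, whereas here the two-point system has to be treated anew (equivalently, one needs uniqueness of the absolutely continuous stationary measure). This is exactly what is needed to pass from the soft part of the statement — existence of $P(\varepsilon)$ and $\lim_{\varepsilon\to0}P(\varepsilon)=0$, which use only $m^{(2)}\sim\lambda$ and absolute continuity — to the identification $P(\varepsilon)\sim m^{(2)}(\Delta_\varepsilon)$ and hence the quantitative rate. A secondary, purely bookkeeping point is keeping track of head versus geometric tail in the layer sum according to whether $\nu_1\kappa$ is above, equal to or below $1$ (cf.\ Remark~\ref{r:boundeddensity}).
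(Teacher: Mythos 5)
Your argument for the soft parts — existence of $P(\varepsilon)$ and $\lim_{\varepsilon\to 0}P(\varepsilon)=0$ — is a genuinely different and more self-contained route than the paper's, which simply cites \cite[Proposition~4.1]{MR1707698} and \cite[Theorem~4.5]{MR4342141} together with Theorem~\ref{t:stat2}. Your observation that the fixed-point equation for the density forces $(f_0^{(2)})^{-1}\{g=0\}\subseteq\{g=0\}$, hence $\lambda(\{g=0\})\in\{0,1\}$ by ergodicity of the toral endomorphism $(x,y)\mapsto(Nx,Ny)\bmod 1$, so $m^{(2)}\sim\lambda$, is a nice addition; the non-ergodic Birkhoff theorem and conditional monotone convergence then do give the two soft claims on a set of full $\nu\times\lambda$-measure.

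However, there is a real gap for the quantitative assertion, and you have correctly located it: everything in your write-up reduces the sharp rate to ergodicity of $\mu^{(2)}=\nu\times m^{(2)}$, and you only gesture at a Hopf argument ``along the lines of the proof of Theorem~\ref{t:M>Nergodic}'' plus uniqueness of the absolutely continuous stationary measure. Neither is established, and neither is easy here: the Hopf argument in Theorems~\ref{t:M>Nergodic} and \ref{t:M<Nergodic} leans on the invertible extension $\Gamma$ and on asymptotic contraction of whole intervals (synchronization), mechanisms that are unavailable for the two-point skew product where expansion dominates; and uniqueness of the absolutely continuous stationary measure for an expanding-on-average IFS with discontinuities is not a formality. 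The paper avoids ergodicity of $\mu^{(2)}$ entirely by a structural argument special to multiplicatively dependent $(M,N)$: it writes $(x_n,y_n)\in S_{h_n}$ where $h_n$ performs the positive-recurrent random walk \eqref{e:rwx}, and tracks the rescaled position $(\tilde x_n,\tilde y_n)\in[0,1)^2$ inside the diagonal square. The rescaled point is frozen except at the positive-density set of times with $h_n=h_{n+1}=0$, when it evolves by the ergodic endomorphism $(x,y)\mapsto N(x,y)\bmod 1$; equidistribution of $(\tilde x_n,\tilde y_n)$ then follows for Lebesgue-typical initial conditions and a.e.\ $\omega$, yielding $P(\varepsilon)=m^{(2)}(\Delta_\varepsilon)$ directly. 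That is the step you would still need to supply, and it is in a different spirit (an explicit equidistribution argument on a product structure) from the Birkhoff-plus-ergodicity route you sketch. Your layer-by-layer computation of $m^{(2)}(\Delta_\varepsilon)$, including the head/tail split at $\nu_1\kappa=1$, is correct and fills in a calculation the paper only asserts.
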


\begin{proof}
The statement for general pairs $(M,N)$ follows from \cite[Proposition~4.1]{MR1707698}, see also  \cite[Theorem~4.5]{MR4342141}, combined with %Theorem~\ref{t:stationary-multdep} and 
Theorem~\ref{t:stat2}.

Extra reasoning is needed to prove the statement for multiplicatively dependent $M$ and $N$.
Write \[S_h = \sum_{j=0}^{\kappa^h-1} [j / \kappa^h , (j+1)/\kappa^h)^2.\]
Take a point $(x_0,y_0) \in [0,1)^2$, which we consider to lie in $S_0 = [0,1)^2$. 
Iterate $(x_n,y_n) = \left( f^{(2)}_\omega \right)^n (x_0,y_0)$.
If we let $h_n$ with $h_0=0$ follow the random walk \eqref{e:rwx}, so
\begin{align*}
	h_{n+1} = \left\{  
	\begin{array}{ll}   \max\{ 0 , h_n - k \}, &   \omega_{n+1} = 0, 
		\\     
		h_n + \ell, & \omega_{n+1} \in \{1,\ldots,M\},
	\end{array}            
	\right.
\end{align*}
 then we find 
$(x_n,y_n) \in S_{h_n}$.
For the distance of $(x_n,y_n)$  to the diagonal $\Delta$ it is irrelevant in which
rectangle $[j / \kappa^{h_n} , (j+1)/\kappa^{h_n})^2)$ the point $(x_n,y_n)$ lies, but the position inside the rectangle is. If we rescale all rectangles to $[0,1)^2$, we find 
a sequence of points $(\tilde{x}_n,\tilde{y}_n) \in [0,1)^2$. The point  $(\tilde{x}_{n+1},\tilde{y}_{n+1})$ can only differ from
 $(\tilde{x}_n,\tilde{y}_n) $ if  $(\tilde{x}_n,\tilde{y}_n)$ and  $(\tilde{x}_{n+1},\tilde{y}_{n+1})$ both lie in  $S_{h_n}=S_{h_{n+1}} = S_0$; in this case
  $(\tilde{x}_{n+1},\tilde{y}_{n+1})  =  N  (\tilde{x}_n,\tilde{y}_n) \pmod 1$.
 Summarizing,
\begin{align*}
	(\tilde{x}_{n+1},\tilde{y}_{n+1}) &= \left\{   \begin{array}{ll}  
		N  (\tilde{x}_n,\tilde{y}_n) \pmod 1, & S_{h_n}=S_{h_{n+1}} = S_0,
		\\ (\tilde{x}_n,\tilde{y}_n), & S_{h_n} \ne S_{h_{n+1}}.
	  \end{array}
		   \right.
\end{align*}	 
As $h_n=h_{n+1}=0$ occurs for a positive proportion of iterates, for almost all $\omega$, and
$(x,y) \mapsto N (x,y) \pmod 1$ is ergodic with respect to Lebesgue measure,
for typical initial points $(x_0,y_0)$ and almost all $\omega$,  $(\tilde{x}_n,\tilde{y}_n)$ is uniformly distributed.
This implies
 $P(\varepsilon) = m^{(2)} (\Delta_\varepsilon)$ and the estimate
$m^{(2)} (\Delta_\varepsilon) \sim \varepsilon^{-\ln (\nu_1)/\ln(\kappa)}$.
\end{proof}

\begin{figure}[!ht]
	\begin{center}
		\includegraphics[height=3.5cm]{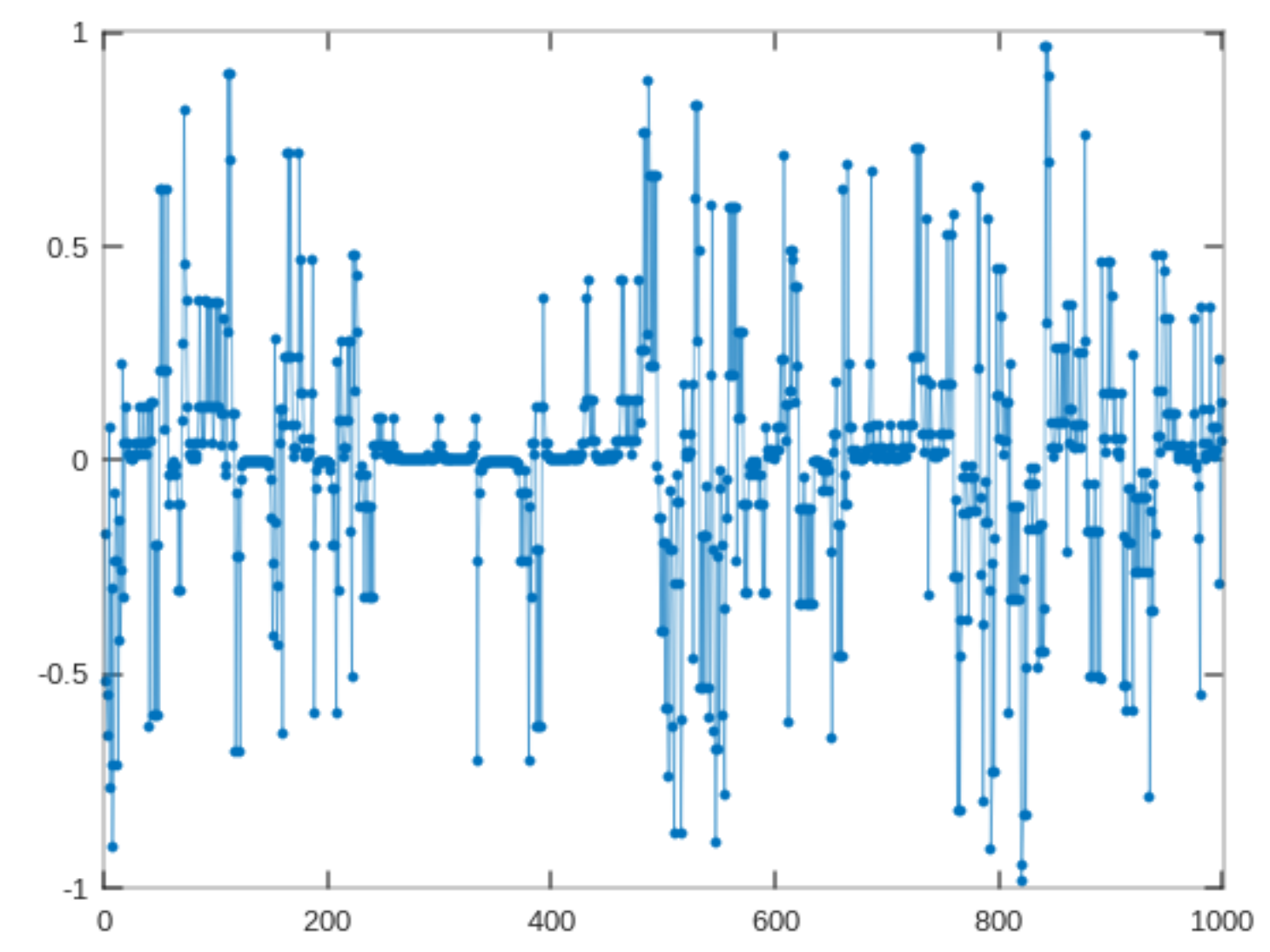}
		\hspace{1cm}
		\includegraphics[height=3.5cm]{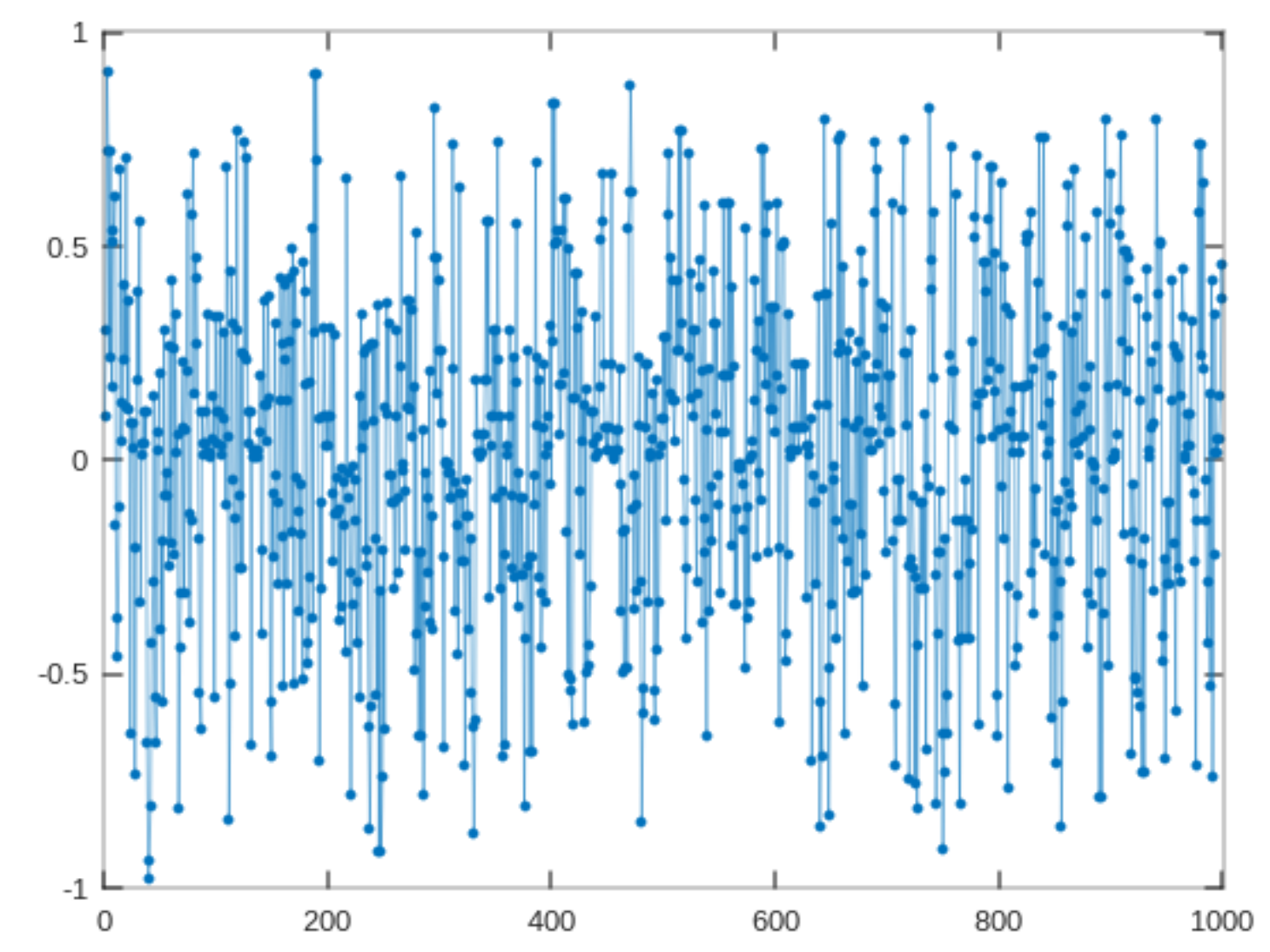}
		\caption{\label{f:stickiness} Shown are time series of signed differences of two orbits, for $(M,N) = (3,3)$.   The left panel is for $p_0=0.6$, the right panel is for $p_0=0.9$.  In both figures, $L_{p_0}>0$. The stationary measure $m^{(2)}$ of the two-point maps has bounded density
			function for $p_0 > 3/4$:  the density of $m^{(2)}$ is unbounded for the left panel and bounded for the right panel.
		}
	\end{center}	
\end{figure}
Figure~\ref{f:stickiness} shows time series of signed differences of two orbits, for $(M,N) = (3,3)$ and two different values of $p_0$.
The stationary measure has unbounded density for the left panel and bounded density for the right panel.  Theorem~\ref{t:divergence} explains and quantifies the relative stickiness of orbits visible in the left panel. % of Figure~\ref{f:stickiness}.
Where we associate the dynamics for zero Lyapunov exponent to intermittency, 
one may argue that also the occurrence of a stationary density for the two-point iterated function system that blows up at the diagonal relates to intermittency. 

In addition to the above result, we have 
for $\nu\times\lambda$-almost all $(\omega,x,y)$,
 \begin{equation*} %\label{e:scrambled}
 	\begin{aligned}
	\liminf_{n\to\infty} \left|f^n_\omega(x) - f^n_\omega(y)\right| = 0,
	\\
	\limsup_{n\to\infty} \left|f^n_\omega(x) - f^n_\omega(y)\right| = 1,
	\end{aligned}
\end{equation*}
for multiplicatively dependent $M,N$.

\section{Final remarks}

We conclude with brief remarks on left out topics and on generalizations that would extend the core findings of this paper to a broader framework.

\subsection{Multiplicatively dependent pairs $(M,N)$}

We note that some of our results in Section~\ref{s:2point} admit simpler proofs in the specific and restricted case of multiplicatively dependent pairs $(M,N)$. Especially the proofs of Theorem~\ref{t:M>Nconvergence} and Theorem~\ref{t:Lp=0frequency} can be simplified by applying the following lemma~\ref{l:monotoneinstrip}, which stresses properties of graphs of $f^n_\omega$ in this case. We call intervals of the form $[i/\kappa^j,(i+1)/\kappa^j)$, for some integers $\kappa >1$, $j \ge 0$ and $0 \le i < \kappa^j$, $\kappa$-adic intervals.

\begin{lemma}\label{l:monotoneinstrip}
Consider $(M,N)$ with $M$ and $N$ multiplicatively dependent: $N = \kappa^{k}$ and $M = \kappa^{\ell}$. Then for each $n \ge 1$ and each $\omega \in \Sigma$ the graph of $f^n_\omega$ is contained in a strip of the form $[i/\kappa^j,(i+1)/\kappa^j)$, for some integers $j \ge 0$ and $0 \le i < \kappa^j$. Moreover, $f^n_\omega$ is a piecewise linear map with constant slope of which all branches have the interval $[i/\kappa^j,(i+1)/\kappa^j)$ as their image.
\end{lemma}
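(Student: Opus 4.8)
The plan is to prove the statement by induction on $n$, in the slightly strengthened form: for every $n \ge 0$ and every $\omega \in \Sigma$ there exist integers $j = j(n,\omega) \ge 0$ and $0 \le i = i(n,\omega) < \kappa^j$ such that $f^n_\omega$ is piecewise linear with constant slope and each of its branches maps \emph{onto} the $\kappa$-adic interval $I_{i,j} := [i/\kappa^j,(i+1)/\kappa^j)$. Containment of the graph of $f^n_\omega$ in the strip $[0,1)\times I_{i,j}$ is then immediate. For $n=0$ the map is the identity and one takes $i=j=0$. For the inductive step I would write $f^{n+1}_\omega = f_{\omega_n}\circ f^n_\omega$, as in \eqref{q:iterates}, and analyse how the single map $f_{\omega_n}$, restricted to a $\kappa$-adic interval, transforms such data.

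The key elementary observation, which I would isolate as a sub-lemma, is the behaviour of the generating maps on a $\kappa$-adic interval $I_{i,j}$. For $1\le m\le M$ the map $f_m(x)=(x+m-1)/\kappa^\ell$ is affine on all of $[0,1)$, so $f_m|_{I_{i,j}}$ is a single affine branch, with image the $\kappa$-adic interval $I_{i',\,j+\ell}$ where $i'=i+(m-1)\kappa^j$; here $0\le i'<\kappa^{j+\ell}$ since $0\le i<\kappa^j$ and $1\le m\le \kappa^\ell$. For $f_0(x)=\kappa^k x \pmod 1$ there are two cases. If $j\ge k$, then $x\mapsto\kappa^k x$ carries $I_{i,j}$ onto $[i/\kappa^{j-k},(i+1)/\kappa^{j-k})$; writing $i=q\kappa^{j-k}+r$ with $0\le r<\kappa^{j-k}$ shows this interval is contained in $[q,q+1)$, so $f_0|_{I_{i,j}}$ is the single affine branch $x\mapsto \kappa^k x-q$ with image the coarser $\kappa$-adic interval $I_{r,\,j-k}$. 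If $j<k$, then $\kappa^{k-j}$ is an integer $\ge 2$ and $x\mapsto\kappa^k x$ carries $I_{i,j}$ onto an interval of integer length $\kappa^{k-j}$ with integer endpoints; hence $f_0|_{I_{i,j}}$ consists of $\kappa^{k-j}$ affine branches, all of slope $\kappa^k$, each mapping onto $I_{0,0}=[0,1)$. In every case $f_{\omega_n}|_{I_{i,j}}$ is piecewise linear with constant slope and all its branches have a common $\kappa$-adic image interval.

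Granting this sub-lemma, the inductive step is a routine composition argument: by the induction hypothesis each branch of $f^n_\omega$ is affine and surjects onto a fixed $\kappa$-adic interval $I_{i,j}$, and by the sub-lemma each branch of $f_{\omega_n}|_{I_{i,j}}$ is affine and surjects onto a fixed $\kappa$-adic interval $I_{i',j'}$; composing, each branch of $f^{n+1}_\omega=f_{\omega_n}\circ f^n_\omega$ is affine with slope the product of the two constant slopes, and surjects onto $I_{i',j'}$. This closes the induction and proves the lemma, with $j(n,\omega)$ following the recursion $j\mapsto j+\ell$ when $\omega_n\ge 1$ and $j\mapsto\max\{j-k,0\}$ when $\omega_n=0$ (so $j(n,\omega)$ is precisely the value after $n$ steps of the random walk \eqref{e:rwx} driven by $\omega$), and $i(n,\omega)$ the corresponding residue.

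I do not expect a serious obstacle; the only point requiring care is the dichotomy in the action of $f_0$ on a $\kappa$-adic interval according to whether its denominator exponent $j$ is at least $k$ or not: for $j\ge k$ it acts as a single affine map onto a coarser $\kappa$-adic interval, while for $j<k$ it wraps the interval an integer number of times around $[0,1)$, producing several full branches. The only mildly fiddly part is bookkeeping the index $i$, which is handled by the residue computations indicated above.
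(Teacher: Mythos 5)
Your proof is correct and takes essentially the same approach as the paper's: observe that each generator maps a $\kappa$-adic interval to a $\kappa$-adic interval (with $f_0$ wrapping onto all of $[0,1)$ when the interval is coarser than $1/N$), and then iterate. The paper states this observation in two sentences and leaves the induction implicit, whereas you carry out the induction and the case analysis for $f_0$ in full detail — including the correct identification of the exponent $j(n,\omega)$ with the random walk \eqref{e:rwx} — but the underlying argument is the same.
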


\begin{proof}
A direct computation shows that $f_i$, for any $0\le i \le M$, maps any $\kappa$-adic interval to a $\kappa$-adic interval. Note that, if the interval contains a discontinuity point of $f_0$ in its interior, then $f_0$ maps this interval to $[0,1)$.
\end{proof}

The pictures in Figures~\ref{f:1010010}~and~\ref{f:0010010}  illustrate the differences between multiplicatively dependent and multiplicatively independent pairs $(M,N)$.

\subsection{Phase transitions for general random systems}

The toy model studied here shows a phase transition for the two-point motion occurring when the Lyapunov exponent crosses zero. In Theorem~\ref{t:main2} we have seen that this involves an explosion of the support
of the stationary measure and an infinite stationary measure at the transition point.
 A central question is whether this is a typical scenario for more general classes of systems.

\vskip .2cm
In this article we looked at iterated function systems generated by expanding and contracting affine maps on the unit interval, determined by a pair of integers $(M,N)$,
and a probability vector $\mathbf p$ determined by a parameter $p_0$.
For fixed $M$ and $N$, a parameterized family of systems arises by varying the probability vector $\mathbf p$ more generally. It would be interesting to investigate how the dynamical properties of this family depend on $\mathbf p$. Where our setting has Lebesgue measure as stationary measure, a first question would be to determine a stationary measure for the iterated function system. For probability vectors other than $\mathbf p$ as in \eqref{q:vectorp}  one can start with constructing a stationary measure as in \cite{MR807105,MR1707698,MR722776,KALLE_2020} and continue from there.	

\vskip .2cm
There are various other generalizations thinkable.  One may consider  different classes of affine maps. One can generalize to nonlinear maps. The maps need not be uniformly expanding and contracting, and for instance a somewhat similar setup for
unimodal interval maps or for circle maps is then possible. One can also consider higher dimensional analogs.

\vskip .2cm
Finally, there is no need to restrict to iterated function systems, and one can study the same type of questions for 
skew product systems driven by more general noise.

\subsection{$d$-point motions} 

	Where we focused on $2$-point motions, 
	one may iterate more than two points by considering $d$-point maps
	$f^{(d)}_i : [0,1)^d \to [0,1)^d$ given by
	\[
	f^{(d)}_i (x_1,\ldots,x_d) = (f_i (x_1),\ldots,f_i(x_d)).
	\]
	This makes little difference in the cases of nonpositive Lyapunov exponent $L_{p_0}$, as we already analyzed the
	fate of the entire interval $[0,1)$ under iterations. 
	But take for instance the setting of Theorem~\ref{t:stationary-multdep} with $L_{p_0}>0$ and multiplicatively dependent $(M,N)$.
	The reasoning to prove Theorem~\ref{t:stationary-multdep}
	provides, under the given assumptions, an absolutely continuous  stationary measure $m^{(d)}$
	for the  iterated function system generated by the $d$-point maps $f^{(d)}_i$, $0\le i \le M$, 
	of the form
	\begin{align*}
	m^{(d)} &= \sum_{h=0}^\infty  b_h  \sum_{j=0}^{\kappa^h-1}  \kappa^{h(d-1)}  \left.\lambda\right|_{ [j/\kappa^h,(j+1)/\kappa^h )^d}.
    \end{align*}
	Its density is bounded if  $\nu_1 \kappa^{d-1} < 1$
	(compare Remark~\ref{r:boundeddensity}).
	The transition values of $p_0$ where the density  of 	$m^{(d)}$  changes from bounded to unbounded thus depend on $d$. 
	This agrees with the observation that it becomes increasingly less likely 
	to find iterates of higher numbers of points close to each other.

 \appendix

\section{Random walks with small drift}\label{s:A}

Reductions to random walks on the half line or the line are a recurring tool for the study of the iterated function systems in this paper. This appendix develops results that are used in the main text in the study of intermittency.

\subsection{Stopping times for random walks with a small negative drift}\label{s:a1}
Write $\Sigma_2 = \{0,1\}^\mathbb{N}$ endowed with the product topology and the Borel $\sigma$-algebra. Fix a $0 < p_0 < 1$ and write $\nu_2$ for the $(p_0,1-p_0)$-Bernoulli measure on $\Sigma_2$ determined on any cylinder set
\[ [a_0 \cdots a_k] = \{ \omega \in \Sigma_2 \, ; \, \omega_i = a_i, \, 0 \le i \le k \}, \, a_i \in \{0,1\},\]
by
\[
\nu_2 (  [a_0\cdots a_k]) = p_0^{\# \{ 0\le i \le k \; ; \; a_i=0 \}  }   (1-p_0)^{\# \{ 0\le i \le k \; ; \; a_i=1 \}}.
\]
For real numbers $L<0$ and $R>0$ consider the random walk given by 
\begin{align}
	\label{e:zn}
	z_{n+1} &= \left\{ \begin{array}{ll} z_n + L, & \omega_n=0, \\ z_n + R, & \omega_n=1, \end{array}\right.
\end{align}
so the step $L < 0$ is taken with probability $p_0$, the step $R>0$ with probability $1-p_0$. We assume that the average drift $\alpha$ given by
\[
\alpha = p_0 L + (1-p_0) R
\]
is small and negative, so $\alpha < 0$. 

\vskip 0.2cm
Consider escape from $[0,\infty)$ for the random walk \eqref{e:zn}.	
For $z_0 \in [0, \infty)$, define the stopping time
\begin{align*}
	T &= \min \{ n\in\mathbb{N}\; ; \; z_n < 0\}.
\end{align*}

\begin{lemma}\label{l:reciprocalpha}
Assume $z_0 \in [0,R)$. Then
	$	\int_{\Sigma_2} T (\omega) \, d\nu_2 \ge -p_0L/ | \alpha |$.
\end{lemma}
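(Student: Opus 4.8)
The plan is to reduce the estimate to Wald's identity together with one first-step refinement. If $\int_{\Sigma_2}T\,d\nu_2=\infty$ the inequality is trivial, so I assume $T$ is $\nu_2$-integrable; note that $T<\infty$ $\nu_2$-a.s.\ since $\alpha<0$. The increments $X_i\in\{L,R\}$ of \eqref{e:zn} are i.i.d., bounded, with mean $\alpha$, so I would invoke Wald's identity (equivalently, optional stopping for the martingale $n\mapsto z_n-\alpha n$, legitimate because the increments are bounded and $E[T]<\infty$) to get
\[
E[z_T]=z_0+\alpha\,E[T],\qquad\text{i.e.}\qquad E[T]=\frac{z_0-E[z_T]}{|\alpha|}.
\]
The goal thus becomes to show $z_0-E[z_T]\ge -p_0L=p_0|L|$.

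The structural fact I would exploit is that the walk can leave $[0,\infty)$ only through a step of size $L$, since an $R$-step with $R>0$ never carries a nonnegative value below $0$. Hence $z_T=z_{T-1}+L$ with $z_{T-1}\in[0,|L|)$; in particular $z_T<0$, so $E[z_T]<0$ and therefore $E[T]>z_0/|\alpha|$. This already gives the claim whenever $z_0\ge p_0|L|$.

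For the remaining range $z_0<p_0|L|$ (so in particular $z_0<|L|$) the crude bound is too weak, and the point is to use that the walk can exit on the very first step with overshoot close to $|L|$. Since $z_0<|L|$, the event $\{T=1\}$ equals $\{\omega_0=0\}$, which has measure $p_0$, and on it $z_{T-1}=z_0$; on the complementary event $\{\omega_0=1\}$ one still has $z_{T-1}\in[0,|L|)$ by the structural fact. Averaging,
\[
E[z_{T-1}]<p_0z_0+(1-p_0)|L|,
\]
so, using $z_T=z_{T-1}+L$,
\[
z_0-E[z_T]=z_0-E[z_{T-1}]+|L|>(1-p_0)z_0+p_0|L|\ge p_0|L|,
\]
which yields $E[T]>p_0|L|/|\alpha|=-p_0L/|\alpha|$.

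I expect the only delicate points to be bookkeeping: checking the hypotheses of Wald's identity (bounded increments, $E[T]<\infty$) and the a.s.\ finiteness of $T$, and, conceptually, recognizing that Wald alone delivers only $E[T]>z_0/|\alpha|$ — the strengthening to $-p_0L/|\alpha|$ for small $z_0$ comes entirely from the first-step refinement, where an initial $L$-step from $z_0<|L|$ overshoots by $|L|-z_0$, close to $|L|$.
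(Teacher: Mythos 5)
Your argument is correct, and the key lemma is the same one the paper uses: Wald's identity reduces $\int_{\Sigma_2}T\,d\nu_2$ to $\tfrac{1}{|\alpha|}\bigl(z_0-E[z_T]\bigr)$, and the constant $-p_0L$ is then pulled out of the exit step. Where you diverge is in handling the dependence on $z_0$. The paper avoids any case split by noting that $T(\omega)$ is, pathwise, nondecreasing in $z_0$, so the lower bound on $\int T\,d\nu_2$ need only be verified at $z_0=0$; there the cylinder $[0]$ forces $T=1$ and $z_T=L$, giving
\[
\int_{\Sigma_2}(z_0-z_T)\,d\nu_2 \;\ge\; \int_{[0]}(-z_T)\,d\nu_2 \;=\; -p_0L
\]
directly. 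You instead keep $z_0$ general, observe the structural fact $z_T=z_{T-1}+L$ with $z_{T-1}\in[0,|L|)$, and treat $z_0\ge p_0|L|$ and $z_0<p_0|L|$ separately, using the first-step event $\{\omega_0=0\}$ in the second case. Both routes are valid; yours is slightly more work but dispenses with the monotonicity observation and yields a marginally sharper (strict) estimate $z_0-E[z_T]>(1-p_0)z_0+p_0|L|$. One small caveat worth flagging in your write-up: in the regime $z_0\ge p_0|L|$ your first-step refinement is not used at all, and in the regime $z_0<p_0|L|$ you need $z_0<|L|$ for $\{T=1\}=\{\omega_0=0\}$, which indeed follows from $z_0<p_0|L|\le|L|$; this is fine, but it is the exact point where the paper's reduction to $z_0=0$ saves the corresponding bookkeeping.
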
	

\begin{proof}
	By Wald's identity, see for instance \cite[Section~VII.2]{MR1368405},
	\begin{align*}
		\int_{\Sigma_2} T (\omega) \, d\nu_2  &=  \frac{1}{ \alpha} \int_{\Sigma_2} z_{T (\omega)} - z_0 \, d\nu_2.
	\end{align*}
	Note that $T(\omega)$ as a function of $z_0$ is minimal for $z_0=0$. Then
\[ \begin{split}
		\frac{1}{ \alpha} \int_{\Sigma_2} z_{T (\omega)} - z_0 \, d\nu_2  =\ & \frac1{|\alpha |}  \int_{\Sigma_2} z_0 - z_{T (\omega)} \, d\nu_2 \\
		\ge \ & \frac1{|\alpha|} \int_{[0]} - z_{T(\omega)} \, d\nu_2 = -\frac{p_0 L}{|\alpha|}.\qedhere
	\end{split} \]
\end{proof}

Next consider escape from an interval $[0,K]$ with $K$ a large positive number.
So let $z_0 \in [0,K]$ and define
\begin{align*}
	T_K &= \min\{ n > 0 \; ; \;  z_n < 0 \text { or } z_n > K \}.
\end{align*}
It is well known that the average stopping time to reach $(-\infty,0)\cup (K,\infty)$ is finite: 	\[\int_{\Sigma_2}  T_K(\omega) \, d \nu_2(\omega) < \infty.\]
To see this it suffices to realize that escape from $[0,K]$ is guaranteed after a sufficient number of identical symbols, either $0$ or $1$, in $\omega$. 
We will discuss the probability of escape through $0$, when starting close to $K$, and establish that 
the probability of escaping through $0$ can be made arbitrarily small by taking the drift $\alpha$ close enough to 0 and $K$ large enough.
For the next lemma we take a context of 
parameterized families of random walks:  consider \eqref{e:zn} with 
\begin{equation}\label{e:L0R0}  
L = L_0 + \alpha, \qquad  R = R_0 + \alpha,
\end{equation}
where $p_0 L_0 + (1-p_0) R_0 = 0$. The average drift $\alpha <0$ is taken as the parameter.

	\begin{lemma}\label{l:AB}
		Consider \eqref{e:zn} with $L,R$ given by \eqref{e:L0R0}. 
		Given $\rho>0$, there is a small $\alpha_0<0$ and a large $K>0$, so that for $\alpha_0 < \alpha < 0$ and
		$z_0 \in (K+L,K] \subset [0,K]$, we have
		\[
		\nu_2 (\{ \omega \in \Sigma_2 \;  ;  \; z_{T_K} < 0 \}) \le \rho.
		\]	
	\end{lemma}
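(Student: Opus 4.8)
The plan is to use an exponential (Cramér--Wald) martingale; the naive idea of optional stopping on $z_n$ itself is unavailable because $z_n$ is not a martingale when $\alpha\neq 0$. Since the drift $\alpha$ is negative, the function $\phi_\alpha(\theta)=p_0e^{\theta L}+(1-p_0)e^{\theta R}$ (with $L,R$ as in \eqref{e:L0R0}) is strictly convex with $\phi_\alpha(0)=1$ and $\phi_\alpha'(0)=\alpha<0$, so it has a unique positive root $\theta=\theta(\alpha)>0$, and $M_n:=e^{\theta z_n}$ is then a martingale for the walk \eqref{e:zn}. Because $\phi_0$ attains its minimum value $1$ at $\theta=0$, one checks that $\theta(\alpha)\to 0$ as $\alpha\to 0^-$ (for any fixed $\theta_1>0$ one has $\phi_0(\theta_1)>1$, hence $\phi_\alpha(\theta_1)>1$ for $\alpha$ close to $0$, forcing $\theta(\alpha)<\theta_1$).

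First I would record the elementary overshoot estimates for $T_K$: if $z_{T_K}<0$ then the last step must be $L$, so $z_{T_K}\in[L,0)$; and if $z_{T_K}>K$ then the last step must be $R$, so $z_{T_K}\in(K,K+R]$. Consequently $M_{n\wedge T_K}$ is uniformly bounded, and since $T_K<\infty$ almost surely (indeed $\int_{\Sigma_2}T_K\,d\nu_2<\infty$, as noted above) the optional stopping theorem gives $e^{\theta z_0}=\int_{\Sigma_2}e^{\theta z_{T_K}}\,d\nu_2$. Writing $q=\nu_2(\{z_{T_K}<0\})$ and using the crude bounds $e^{\theta z_{T_K}}\le 1$ on $\{z_{T_K}<0\}$ and $e^{\theta z_{T_K}}\le e^{\theta(K+R)}$ on $\{z_{T_K}>K\}$ gives $e^{\theta z_0}\le q+e^{\theta(K+R)}(1-q)$, that is,
\[
q\ \le\ \frac{e^{\theta(K+R)}-e^{\theta z_0}}{e^{\theta(K+R)}-1}.
\]
This is exactly where the hypothesis $z_0\in(K+L,K]$ enters: from $z_0>K+L$ we get $e^{\theta z_0}>e^{\theta(K+L)}$, hence
\[
q\ \le\ \frac{e^{\theta K}\bigl(e^{\theta R}-e^{\theta L}\bigr)}{e^{\theta(K+R)}-1},
\]
a bound uniform in $z_0\in(K+L,K]$.

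To finish I would analyze this last bound as $\alpha\to 0^-$: then $\theta=\theta(\alpha)\to 0$, $L\to L_0$, $R\to R_0$, and a first-order Taylor expansion in $\theta$ shows that the right-hand side tends to $\dfrac{R_0-L_0}{K+R_0}$. Since $R_0-L_0>0$ is a fixed constant, given $\rho>0$ I would first choose $K$ so large that $\dfrac{R_0-L_0}{K+R_0}<\rho/2$, and then, with this $K$ fixed, choose $\alpha_0<0$ close enough to $0$ that the displayed bound remains below $\rho$ for all $\alpha\in(\alpha_0,0)$; this yields $\nu_2(\{z_{T_K}<0\})\le\rho$ for every $z_0\in(K+L,K]$, as required. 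The step I expect to be the main obstacle is precisely this bookkeeping: extracting the inequality in the \emph{useful} direction from the martingale identity (using $z_{T_K}\ge L$ at the bottom and $z_{T_K}\le K+R$ at the top, not their reverses), and then arranging the two-stage choice of constants ($K$ first, then $\alpha_0$) so that the degeneracy $\theta(\alpha)\to 0$ produces a genuinely small estimate rather than the trivial conclusion $q<1$.
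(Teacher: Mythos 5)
Your proof is correct and uses the same core technique as the paper: the Cram\'er--Wald exponential martingale $e^{\theta z_n}$ with $\theta$ the positive root of $p_0e^{\theta L}+(1-p_0)e^{\theta R}=1$, followed by Doob's optional stopping at $T_K$ and a Taylor expansion in $\theta$ as $\alpha\to 0^-$. The only organizational difference is bookkeeping: the paper introduces the average overshoots $c_1\in[L,0]$, $c_2\in[0,R]$ and solves exactly for $A_K$, then concludes with a monotone coupling argument (using $z_{\alpha,n}=z_{\alpha_0,n}+n(\alpha-\alpha_0)$ to show the escape probability is decreasing in $\alpha$); you instead use the crude bounds $e^{\theta z_{T_K}}\le 1$ and $e^{\theta z_{T_K}}\le e^{\theta(K+R)}$, obtain a bound uniform in $z_0\in(K+L,K]$, and choose $K$ before $\alpha_0$, which makes the concluding coupling step unnecessary and is arguably cleaner.
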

	
\begin{proof}
Write $\zeta_0=0$ and $\zeta_n = z_n - z_{n-1}$ for $n \ge 1$. For $n \ge 0$ write $S_n = \zeta_0 + \cdots + \zeta_n = z_n - z_0$. Note that for $n \ge 1$, $S_n$ depends on $\omega_0,\ldots,\omega_{n-1}$. Consider the function
	\[
	G_n = e^{r^* S_n},
	\]
	where $r^* > 0$ is the solution of
	\[
	p_0  e^{L r^*}  +  (1-p_0) e^{R r^*} = 1.
	\]
	By developing the exponential functions in a Taylor series, one can check that this equation has a unique solution $r^* > 0$ with $r^* \to 0$ as $\alpha \to 0$:
	\[p_0 \left(1 + Lr^* + \frac{1}{2} L^2 (r^*)^2\right) + (1-p_0) \left( 1 + Rr^* + \frac{1}{2} R^2 (r^*)^2 \right)	= 1 + \mathcal{O}\left((r^*)^3\right)\] 
	yields  
	\[
	\alpha r^* + \frac{1}{2} \left(p_0 L^2 + (1-p_0) R^2\right) (r^*)^2 = \mathcal{O}\left((r^*)^3\right)
	\]
	and thus
	\[\alpha + \frac{1}{2} \left(p_0 L^2 + (1-p_0) R^2\right) r^* = \mathcal{O}\left((r^*)^2\right)\]
	for $r^*\ne 0$, 
	from which $r^*$ can be solved by the implicit function theorem.	
	Now $G_n$ is a martingale as for any cylinder $[a_0 \ldots a_{n-1}] \subseteq \Sigma_2$,
	\begin{align*}
		\int_{[a_0 \ldots a_{n-1}]}   e^{r^* S_{n+1}}  \, d\nu_2 &=
		\int_{[a_0 \ldots a_{n-1}]}  e^{r^* S_{n}} e^{r^* \zeta_{n+1}}  \, d\nu_2 
		\\
		&= e^{r^* S_{n}}  \int_{[a_0\ldots a_{n-1}]}  e^{r^* \zeta_{n+1}}  \, d\nu_2
		\\
		&= e^{r^* S_{n}} \left(  \int_{[a_0\ldots a_{n-1} 0]}  e^{ L r^*}  \, d\nu_2 +
		\int_{[a_0\ldots a_{n-1} 1]}  e^{R r^*}  \, d\nu_2  \right)
		\\
		&=  e^{r^* S_{n}} \left( \int_{[a_0\ldots a_{n-1}]}  p_0  e^{L r^*}  +  (1-p_0) e^{R r^*}\, d\nu_2  \right)
		\\
		&=  \int_{[a_0\ldots a_{n-1}]}  e^{r^* S_{n}}  \, d\nu_2.
	\end{align*}
	By Doob's optional stopping theorem,
	see for instance \cite[Theorem~VII.2.2]{MR1368405}, we see that for any large $K$, 
	\begin{align*}
		\int_{\Sigma_2} e^{r^* S_{T_K}} \, d\nu_2  = e^{r^* S_0}
		=  1.
	\end{align*}
	This gives
	\begin{align*}
		\int_{\Sigma_2} e^{r^* z_{T_K}} \, d\nu_2   =  e^{z_0 r^*}.
	\end{align*}
	Observe that $z_{T_K} \in [L,0)$ or $z_{T_K} \in (K,K+R]$. For any large $K$ let
	\[A_K =
	\nu_2 (\{ \omega \in \Sigma_2 \;  ;  \; z_{T_K}(\omega) < 0 \})>0
	\]
	be the probability that $z_{T_K} < 0$. Let $0 < \rho < 1$. Our goal is to show that we can find $\alpha_0$ and $K$ such that $A_K \le \rho$ for all $\alpha \in (\alpha_0,0)$. For any $\alpha <0$ write
	\begin{align*}
		\int_{\Sigma_2} e^{r^* z_{T_K}} \, d\nu_2  &=
		A_K   e^{c_1 r^*} +  (1-A_K)  e^{K r^*} e^{c_2 r^*},
	\end{align*}
	with
	\begin{align*}
		e^{c_1 r^*} &= \frac{1}{A_K} \int_{ \{ \omega \in \Sigma_2 \; ; \; z_{T_K} < 0\}} e^{r^* z_{T_K}} \, d\nu_2,
		\\
		e^{c_2 r^*} &= \frac{1}{1-A_K} \int_{ \{ \omega \in \Sigma_2 \; ; \; z_{T_K} > K\}}e^{r^* (z_{T_K} - K)} \, d\nu_2.
	\end{align*}
	Note that $c_1 \in [L,0]$ and $c_2 \in [0,R]$; $c_1$ represents the average value that $z_{T_K}$ takes if the random walk escapes through 0 and $c_2+K$ is the average value that $z_{T_K}$ takes if the random walk escapes through $K$.
	We obtain
	\begin{align*}
		A_K   &= 
		\frac{ e^{c_2 r^*} - e^{z_0 r^*}e^{-K r^*}}{ e^{c_2 r^*} - e^{c_1 r^*} e^{-K r^*}}.
	\end{align*}

Since $r^* \to 0$ as $\alpha \to 0$, the terms
$e^{c_1 r^*}$ , $e^{c_2 r^*}$ and  $e^{z_0 r^*}e^{-K r^*}$
converge to $1$  as $\alpha \to 0$.
So we can take $\alpha_0$ small so that for the corresponding $r_0^*$,
	\[
	e^{c_2 r_0^*} - e^{z_0 r_0^*}e^{-K r_0^*} < \rho/2.
	\] 
Now taking $K$ large enough ensures that
$A_K < \rho$ for these values of $\alpha_0$ and $K$.	

\vskip 0.2cm	 
To show that $A_K < \rho$ for any $\alpha$ with $\alpha_0 < \alpha < 0$, fix such an $\alpha$. We must now consider that $z_n$ depends on $\alpha$ and write
$z_{\alpha,n}$.  Clearly, for a fixed value of $z_0$,  \[z_{\alpha_0,n} < z_{\alpha,n} .\]
This implies that  $\nu_2 (\{ \omega \in \Sigma_2 \;  ;  \; z_{T_K} < 0 \})$ of escape through $0$ decreases with $\alpha$. 
The lemma follows.
\end{proof}

\subsection{Stopping times for random walks with time dependent levels}

We stay with the random walk $z_n$ from \eqref{e:zn}, but now with the no drift condition
\[ 
 p_0 L + (1-p_0)R = 0. 
\] 
Let $\varepsilon_n = 1 / n^{2}$ and note that $\sum_{n=1}^\infty \varepsilon_n < \infty$. Let also $\varepsilon >0$ and $p \ge 1$ be such that $\varepsilon_{p+1} < \varepsilon < \varepsilon_p$. We assume that $\varepsilon$ is small or equivalently that $p$ is large. Define $K_n = -\ln(\varepsilon_{n+p}) + \ln(\varepsilon)$, so 
\begin{align}\label{e:Kn=}
K_n &= 2 \ln (n+p) + \ln(\varepsilon).
\end{align}
Note that $K_0  = 2 \ln(p) + \ln(\varepsilon) < 0$ and that $\lim_{n \to \infty} K_n = \infty$. Suppose $z_0 > K_0$.
We want to know the average stopping time to reach the $n$-dependent level
$K_n$.  Time dependent stopping levels like these have been considered in \cite{MR394857} and \cite[Section~4.5]{MR2489436}. We use the statements that are derived here in the study of intermittency in Section~\ref{s:2point=0}.
Write 
\[
S = \min \{ n > 0 \; ; \; z_n < K_n \}.
\]
\begin{figure}[ht]
	\begin{center}
		\resizebox{10cm}{!}{
			\input{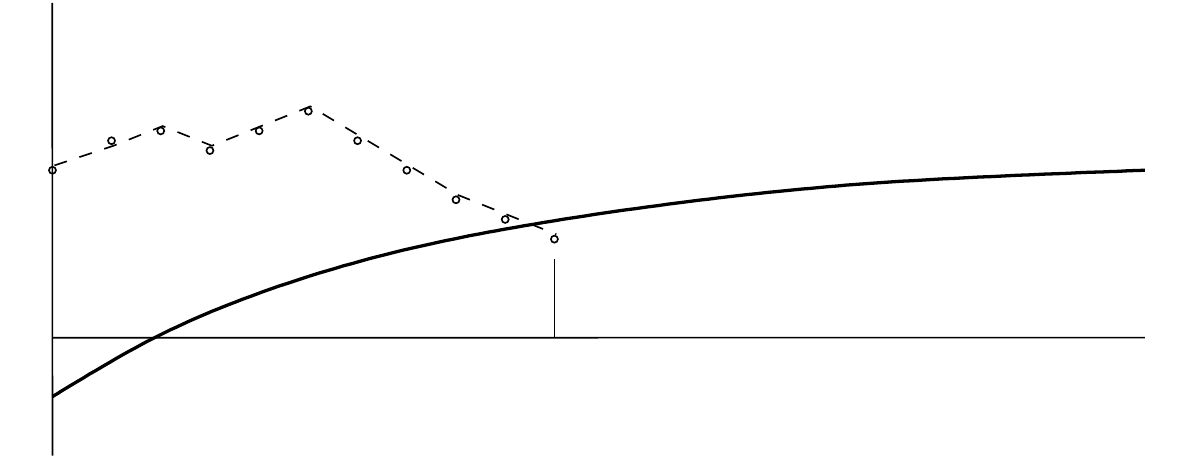_t}  
			}
	\end{center}
	\caption{
		Time dependent stopping levels where stopping levels $K_n$ lie on a non constant curve. 
	 }\label{f:stoppinglevels}
\end{figure}
It is standard that in the case of no drift the expected stopping time to reach a point smaller than the fixed level $K_0$ is infinite.
We will show that the expected value of $S$ is still infinite, using the slow growth of $K_n$. Figure~\ref{f:stoppinglevels} illustrates the setting.

\begin{lemma}\label{l:stoppingtimeinfinite}
	\[\int_{\Sigma_2}  S(\omega) \, d\nu_2 (\omega) = \infty. \]
\end{lemma}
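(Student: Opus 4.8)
The plan is to show that $S$ has a fat tail, $\nu_2(\{S>n\})$ decaying no faster than roughly $n^{-1/2}$, so that $\int_{\Sigma_2} S\,d\nu_2 = \sum_{n\ge 0}\nu_2(\{S>n\})$ diverges. First I would rewrite the problem in a clean form: setting $\tilde z_n = z_n-K_0$ and $\phi(n)=K_n-K_0 = 2\ln(1+n/p)$, the process $\tilde z_n$ is a mean-zero, finite-variance random walk started at $\tilde z_0 = z_0-K_0>0=\phi(0)$, and $S=\min\{n>0\,;\,\tilde z_n<\phi(n)\}$ is its first-passage time below the increasing, concave, logarithmically growing barrier $\phi$. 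Thus it suffices to produce $c=c(\varepsilon)>0$ and $\gamma>0$ with
\[
\nu_2(\{S>n\}) \;=\; \nu_2\bigl(\{\tilde z_t\ge\phi(t)\ \text{for all }1\le t\le n\}\bigr)\;\ge\;\frac{c}{\sqrt n\,(\ln n)^{\gamma}},\qquad n\ge 2,
\]
since $\sum_n n^{-1/2}(\ln n)^{-\gamma}=\infty$. (In fact such boundary-crossing problems with a barrier of order $o(\sqrt n)$ are exactly the subject of \cite{MR394857} and \cite[Section~4.5]{MR2489436}, which give $\nu_2(\{S>n\})\sim c/\sqrt n$ directly; the remainder of this plan sketches why such a bound must hold.)

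To get the bound by hand I would split $[0,n]$ at $m=\lceil(\ln n)^3\rceil$ and argue in two phases. On $[m,n]$ the barrier is bounded by $\phi(n)=2\ln(1+n/p)\le\sqrt m$ for $n$ large, so if the walk sits at height $\tilde z_m\ge\sqrt m$ at time $m$, then by the classical estimate $\mathbb P_h(\text{stay}\ge 0\text{ for }N\text{ steps})\asymp h/\sqrt N$ for mean-zero finite-variance walks with $1\ll h\ll\sqrt N$, the walk stays above $\phi(n)$ (hence above $\phi(t)$ for every $t\in[m,n]$) during the remaining $n-m\sim n$ steps with probability at least a constant times $(\sqrt m-\phi(n))/\sqrt n\ge c_1(\ln n)^{3/2}/\sqrt n$. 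On $[0,m]$ one needs $\{\tilde z_t\ge\phi(t)\ \text{for }t\le m\}$ together with $\{\tilde z_m\ge\sqrt m\}$; conditionally on the former, the walk is at height of order $\sqrt m$ at time $m$ with probability bounded below (a Brownian-meander type fact), so this joint event has probability at least a constant multiple of $g(m)/\sqrt m$ where $g(m):=\sqrt m\,\nu_2(\{\tilde z_t\ge\phi(t),\ t\le m\})$. Since $(\ln n)^{3/2}/\sqrt m=1$, multiplying the two phases gives the self-improving recursion $g(n)\ge c_2\,g\bigl((\ln n)^3\bigr)$. Because $\phi$ is bounded on any fixed window and $\varepsilon$ is small, a sufficiently long word of $1$'s realises the base event, so $g(n_0)>0$ for a fixed $n_0$; iterating the recursion $O(\log^{\ast} n)\le O(\log\log n)$ times degrades the constant only by a factor $c_2^{\,\log\log n}=(\ln n)^{-\gamma}$, yielding $g(n)\ge c/(\ln n)^{\gamma}$, i.e. the displayed bound.

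The main obstacle is precisely this lower bound on $\nu_2(\{S>n\})$. One has the trivial \emph{upper} comparison $S\le\min\{n>0\,;\,z_n<K_0\}$, whose expectation is already infinite but is therefore useless, and one cannot run the naive argument of forcing the walk to stay above the maximal level $K_n\to\infty$, since that level eventually exceeds the fixed starting point. The substantive point is to exploit that $\phi(t)=O(\ln t)=o(\sqrt t)$ rises slowly enough that a mean-zero walk — which naturally reaches height of order $\sqrt t$ — keeps ahead of it with the same $1/\sqrt n$ order of probability as for a fixed barrier; converting this heuristic into a uniform estimate (through the two-phase recursion above, or by quoting \cite{MR394857} and \cite[Section~4.5]{MR2489436}) is the crux, after which the conclusion is just $\sum_n 1/\sqrt n=\infty$.
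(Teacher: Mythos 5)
You take a genuinely different route from the paper. The paper's proof replaces the concave barrier $K_n$ by a sequence of tangent lines $M_n^{(m)}$, so that against each tangent line the walk becomes a random walk with small negative drift $-\beta_m = -2/(m+p)$ against a \emph{constant} stopping level; Lemma~\ref{l:reciprocalpha} (Wald's identity) and Lemma~\ref{l:AB} (exponential martingale, optional stopping) then control the expected stopping time and the downward escape probability at each stage, and a geometric-series bookkeeping with $(1-\rho)(1+c)>1$ sums the contributions. Your plan instead aims for a tail estimate $\nu_2(\{S>n\}) \gtrsim n^{-1/2}(\ln n)^{-\gamma}$ built from the classical persistence estimate $\mathbb{P}_h(\text{stay nonnegative for }N\text{ steps}) \asymp h/\sqrt N$ together with a multi-scale recursion that exploits $K_n = O(\ln n) = o(\sqrt n)$; the divergence of $\sum n^{-1/2}(\ln n)^{-\gamma}$ then finishes. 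Conceptually this is the ``boundary-crossing'' viewpoint rather than the ``small-drift / Wald'' viewpoint, and both are legitimate. Your approach is arguably shorter and gives quantitative tail information (essentially $1/\sqrt n$ up to logarithms, in fact the $\log^\ast n$ iteration count shows the polylog loss is negligible); the paper's approach is more elementary and self-contained, needing only the two lemmas in the appendix.

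The one step I would flag as a genuine gap is your ``Brownian-meander type fact'': that conditionally on $\{\tilde z_t\ge\phi(t),\ t\le m\}$ the walk sits at height $\ge\sqrt m$ at time $m$ with probability bounded below uniformly in $m$. For a \emph{constant} barrier this is the Iglehart--Bolthausen meander convergence theorem, but for the moving barrier $\phi(t)=2\ln(1+t/p)$ it is not an off-the-shelf statement: it is plausible because $\phi(t)=o(\sqrt t)$, and results of this flavor exist in the boundary-crossing literature (indeed in the very references \cite{MR394857} and \cite[Section~4.5]{MR2489436} that you and the paper both cite), but you neither prove it nor cite a precise statement. This is exactly the technical difficulty the paper's tangent-line device is designed to sidestep. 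If your intent is to quote the boundary-crossing references directly for $\nu_2(\{S>n\})\sim c/\sqrt n$, then say so and the lemma follows in one line; if instead you want a self-contained argument at the level of Lemmas~\ref{l:reciprocalpha} and~\ref{l:AB}, the meander step for the moving barrier still needs to be justified.
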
	

\begin{proof}
The strategy of the proof is to give a lower bound for the integral from the lemma by considering, instead of the stopping level $K_n$, a sequence of affine stopping levels $(M_n^{(m)})_{m \ge 0}$ that have slope decreasing to 0 as $m \to \infty$. We then translate the situation of having a random walk with no drift and affine stopping levels to a random walk with small negative drift and a constant stopping level $K$, so that we can apply the results from the previous section. The stopping levels $M_n^{(m)}$ are obtained by taking tangent lines to $K_n$ at suitable moments $m$. We will first explain this last part for an arbitrary suitable time $m$.

\vskip .2cm
We will work with a large positive number $K$; a condition for $K$ will be given in the course of the proof. As we have $z_m >  K + K_m$ with positive probability, for a suitable positive integer $m$, to prove the lemma we may assume
\[z_0 > K + K_0.\]
Counting iterates from $m$ on by writing $n = m+i$, $i\ge 0$, and translating the values $K_{n}$ by $K_0-K_m$ replaces $K_n$ by $K_{i+m}  + K_0 - K_m = 2 \ln(i+m+p) + 2 \ln(p) - 2\ln(p+m) + \ln(\varepsilon)$, $i \ge 0$. This is of the form \eqref{e:Kn=} and shows that we may also assume that $p$ is large in \eqref{e:Kn=}; a condition for $p$ will also be given in the course of the proof. 
	
\vskip .2cm
We construct a stochastic process $u_n$ built from random walks $v^{(m_i)}_n$, $m_i \le n \le m_{i+1}$ for certain stopping times $m_i$.  We start with two ingredients, the introduction of stopping times $S_m$ and $U_{m,l}$.  \\

\noindent {\sc Definition of a stopping time $S_m$.}	
Start with an integer $m \ge 0$ such that $z_m > K + K_m$. For all $n>m$ we replace the level $K_n$ by a level $M_n = M^{(m)}_n$ depending affinely on $n$: 
	\[
	M_n = \alpha_{m} + \beta_{m} n
	\] 
	with
	$\alpha_{m},\beta_{m}$ so that the line $x \mapsto \alpha_{m} + \beta_{m} x$ 
	is tangent to $x \mapsto  2 \ln(x+p) + \ln(\varepsilon)$ at $x=m$.
	This gives
	\[
	\alpha_{m} = K_{m}  -  2 m /(m+p) \quad \textrm{and} \quad \beta_{m} = 2/(m+p).
	\]
	As the graph of $ x \mapsto 2\ln (x+p)$ is concave, we have $K_n \le M_n$. Consider the stopping time
	\[
	U :=  \min \{ n > m  \; ; \; z_n < M_n  + K\} \le \min \{ n > m \; ; \; z_n < K_n + K\}.
	\]
Define 
	\[
	v^{(m)}_n = z_n - M_n = z_n - \beta_{m} (n-m) - K_{m}.
	\] 
So $v^{(m)}_{m} = z_{m} - K_{m}$ and thus $v^{(m)}_{m}  > K$. The sequence $v^{(m)}_n$ defines a random walk given by  
\[ v^{(m)}_{n+1} =   z_{n+1} - \beta_{m} (n+1-m) - K_m = \begin{cases}
v^{(m)}_n +L  -\beta_m, & \omega_n=0,\\
v^{(m)}_n + R - \beta_m, & \omega_n=1.
\end{cases}\]	
Hence, the random walk $v^{(m)}_n$ has a negative drift $-\beta_m = -\frac{2}{m+p}$, which is small if $p$ is large and depends on and is decreasing in $m$. The demand $z_n < M_n +K$ is equivalent to $v^{(m)}_n < K$. Write 
	\[
	S_{m}  (\omega) = \min \{ n > m  \; ; \; v^{(m)}_n < K\}
	\]
and denote $\mathbb{E}_m (S_m) = \int_{\Sigma_2}  S_m \, d\nu_2$. Note that the smallest value of $\mathbb{E}_m (S_m)$, for varying $v_m^{(m)} \ge K$, is obtained for $v^{(m)}_m = K$. As the expected stopping time is similar to the reciprocal of the average drift, see Lemma~\ref{l:reciprocalpha}, 
\begin{align}\label{e:reciprocalpha}
\mathbb{E}_m (S_m ) &\ge  \frac{-p_0 (L-\beta_m)}{\beta_m} \ge \frac{-p_0L}{2} (m+p). %\ge c (m+p)
\end{align}
	
\noindent {\sc Definition of a stopping time $U_{m,l}$.}
	The second ingredient is the random walk $v^{(m)}_n$ with values inside the interval $[0,K]$.
	Assume $v^{(m)}_l \in (K-L,K]$ for a positive integer $l >m$ and 
	consider a second stopping time
	\[
	U_{m,l}  (\omega) = \min \left\{ n > l \; ; \; v^{(m)}_n < 0 \quad \textrm{or} \quad v^{(m)}_n> K \right\}.
	\]
	Note that $v^{(m)}_n < 0$ is equivalent to $z_n < M_n$ and 
	$v^{(m)}_n> K$ is equivalent to $z_n > M_n+K$.
	Write 
	\[
	\rho_m = \nu_2 \left( \left\{ \omega \in \Sigma_2 \; ; \;  v^{(m)}_{U_{m,l}} < 0 \right\}  \right)
	\]
	for the probability that $v^{(m)}_n$ crosses $0$.  	
	As $v^{(m)}_l \in (K+L,K]$, by Lemma~\ref{l:AB} 
	we find that $\rho_m$ will be small
	for all sufficiently large $m$ if $K$ is large. More precisely, given any $\rho >0$ we can choose $p$ sufficiently small (so that the drift $-\beta_m$ is close enough to zero for all $m$) and $K$ sufficiently large such that $\rho_m < \rho$ for all $m$ or equivalently
	\begin{align}\label{e:roleofrho}
	\nu_2\left( \left\{ \omega \in \Sigma_2 \; ; \;  v^{(m)}_{U_{m,l}} >K \right\}  \right) &\ge 1-\rho
	\end{align}
	for all $m$.	 \\
	
	\noindent {\sc Construction of the process $u_n$.}
	\begin{figure}[ht]
		\begin{center}
			\resizebox{12cm}{!}{
				\input{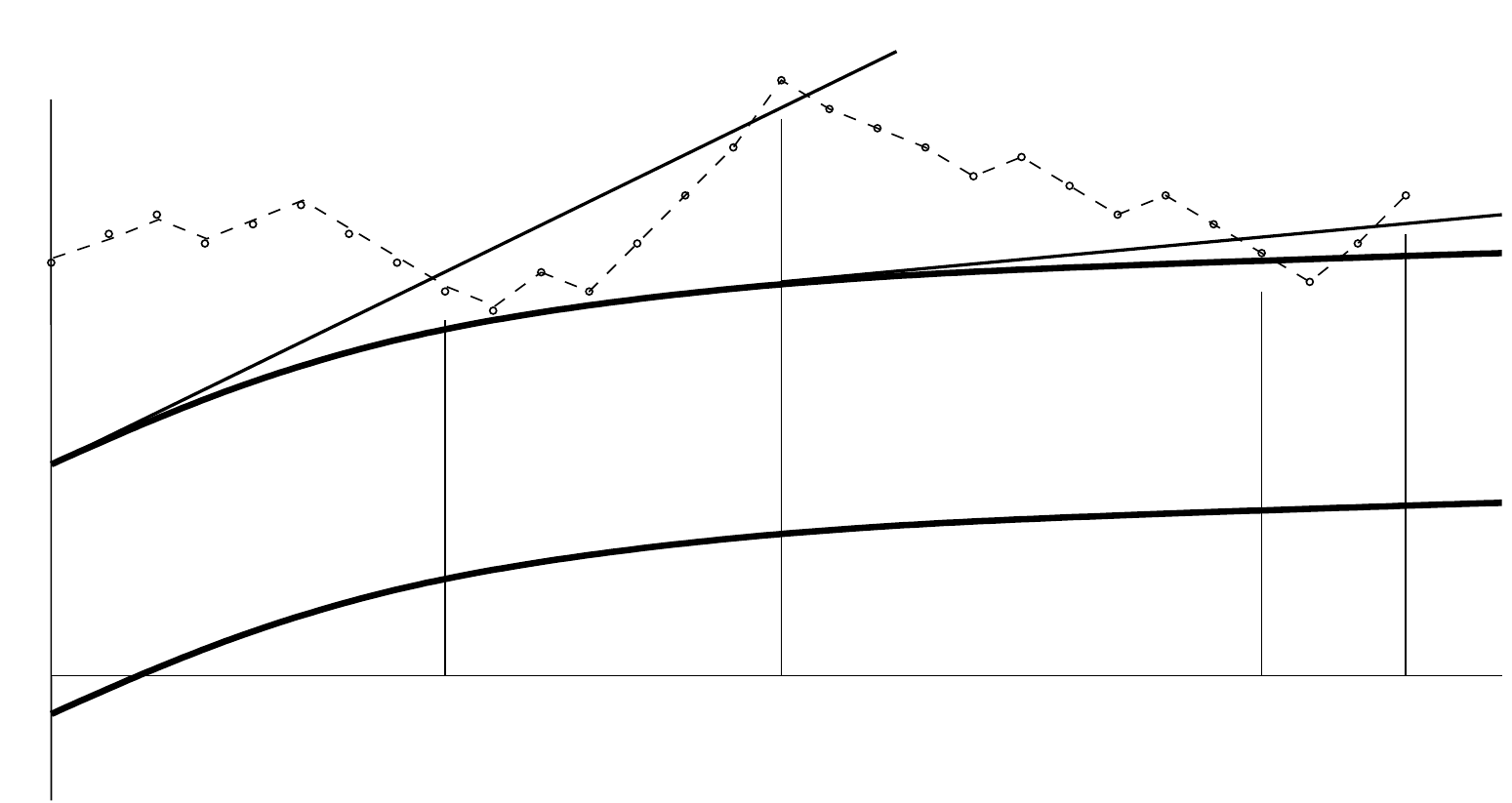_t}  }
		\end{center}
		\caption{
			A possible path $z_n$ with indicated stopping times.
		}\label{f:stoppinglevels+}
	\end{figure}

Let $m_0 = 0$, assume $u_0 >K$,  and 
	define the process $u_n$, $n \ge 0$, as follows.
	Write $l_0 = S_{m_0}$. If $l_0 < \infty$, let $m_1 = U_{m_0,l_0}$.
	For $m_0 \le n \le m_1$ we let $u_n = v^{(m_0)}_n$.
	Inductively, 
	suppose $l_i$ is defined for $0 \le i < k$ and $m_i$ is defined for $0\le i \le k$.
	We have $u_n$ defined for $0\le i \le m_k$.
	Then set
	\[
	l_k = S_{m_k}. 
	\]
	If $l_k < \infty$, we let $u_n =  v^{(m_k)}_n$ for $m_k \le n \le l_{k}$ and let
	\[
	m_{k+1} = U_{m_k,l_k}.
	\]
	For $m_{k+1} < \infty$, we have either  $v^{(m_{k})}_{m_{k+1}} <0$  or   $v^{(m_{k})}_{m_{k+1}} > K$.
	If  $v^{(m_{k})}_{m_{k+1}} > K$, then we let 
	\[
	u_n = v^{(m_k)}_n, \qquad l_k \le n \le m_{k+1}.
	\]
	Note that $u_{m_{k+1}} - K_{m_{k+1}} > K$.
	
	If some $v^{(m_{i})}_{m_{i+1}} < 0$, we let $m_j = m_{i+1}$ for $j > i$. A path of the corresponding walk $z_n$ with indicated stopping times is depicted in Figure~\ref{f:stoppinglevels+}. A similar visualization 	of paths $u_n$ is presented in Figure~\ref{f:stoppingtimes}.\\
	  
\noindent {\sc Estimating the average of the stopping time $S$.}	  
	\begin{figure}[ht]
	\begin{center}
		\resizebox{12cm}{!}{\input{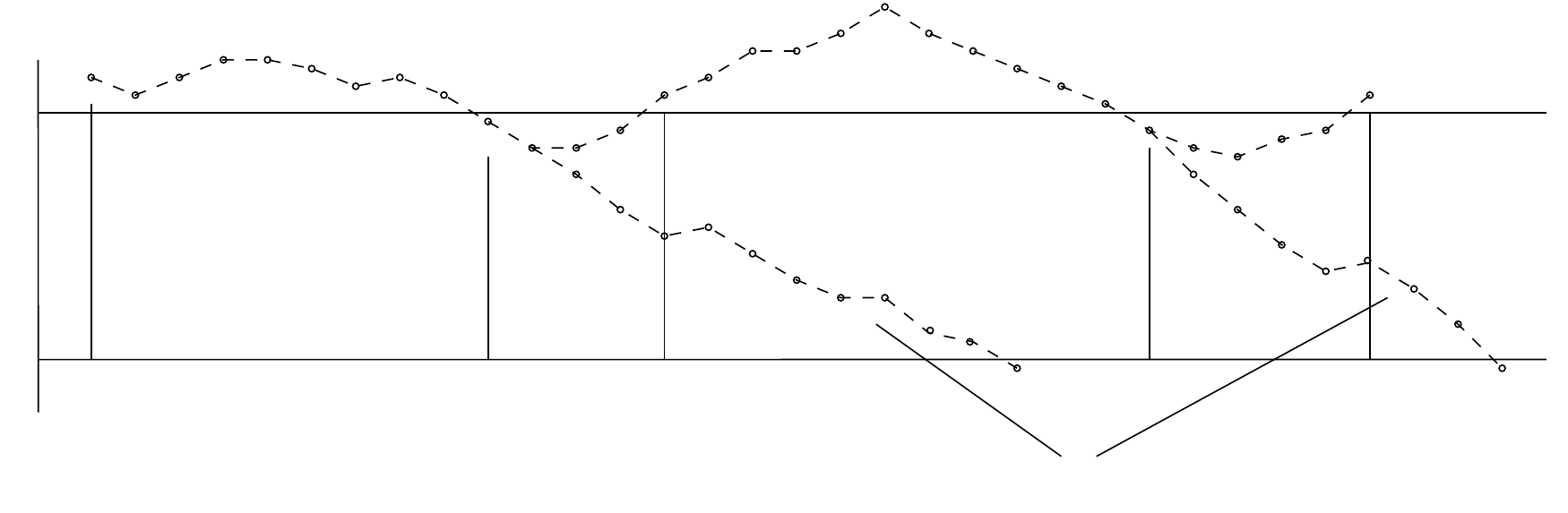_t}}
	\end{center}
	\caption{
		Visualization of breaking up random walks $u_n$ in parts where $u_n > K$ or $u_n \in [0,K]$. 
	}\label{f:stoppingtimes}
\end{figure}
Having constructed the path $u_n$ with the sequence of stopping times $m_k$, we can estimate the expected value of the stopping time $S$. We first set the parameters. Set $c = -\frac{p_0L}{2}$ and let $\rho >0$ be small enough such that $(1-\rho)(1 +c) >1$.  Let $\alpha_0 < 0$ and $K>0$ be as given by Lemma~\ref{l:AB}. Choose $p$ so that $-\frac2p \in (\alpha_0,0)$. Let $\frac1{(p+1)^2} < \varepsilon < \frac1{p^2}$ and let $K_n$, $M_n^{(m)}$ be as defined before. Note that the choice of $p$ implies that $-\beta_m \in (\alpha_0,0)$ for all $m \ge 0$. Let 
\[ E_0 = \{ \omega \in \Sigma_2 \, ; \, l_0(\omega) < \infty \, \text{ and }\, v_{m_1}^{(m_0)}(\omega) >K\}\]
and for $n \ge 1$ let
\[ E_n = \{ \omega \in E_{n-1} \, ; \, l_n(\omega) < \infty \, \text{ and }\, v_{m_{n+1}}^{(m_n)}(\omega) >K\}.\]
Note that $S(\omega) \ge l_0(\omega)$ for all $\omega \in \Sigma_2$ and that for $\omega \in E_n$ we have
\[ S(\omega) \ge l_0 (\omega) + l_1(\omega)-m_1(\omega) + \cdots + l_{n+1}(\omega)-m_{n+1}(\omega), \, n \ge 0.\]
Hence,
\begin{equation}\label{q:sumS}
\int_{\Sigma_2} S \, d\nu_2 \ge \int_{\Sigma_2} l_0 \, d\nu_2 +\sum_{n \ge 0} \int_{E_n} l_{n+1} -m_{n+1}\, d\nu_2.
\end{equation}
As established in \eqref{e:reciprocalpha}, 
\[
 \mathbb{E} (l_0)  \ge  -\frac{p_0 L}{2}p = cp.
\]
The set $E_0$ is a union of cylinders on which the time $m_1$ is constant. Let $\eta = \eta_0 \cdots \eta_k \in \{0,1\}^k$ be such that the cylinder $C= [\eta_0 \cdots \eta_k]$ is in $E_0$ with $k=m_1(\omega)=: m_1(\eta)$ for each $\omega \in C$ and write $l_0(\eta) $ for the value $l_0(\omega)$, $\omega \in C$. Then by Lemma~\ref{l:reciprocalpha},
\[ \int_{C} l_1-m_1 \, d\nu_2 \ge -\frac{p_0L}{2} (m_1(\eta)+p) \nu_2(C) \ge c (l_0(\eta)+p) \nu_2(C).\]
From \eqref{e:roleofrho} we see that
\[ \int_{E_0} l_1-m_1 \, d\nu_2 \ge \nu_2(E_0) c(cp+p) \ge (1-\rho)cp (1+c).\]
Similarly, let $\eta = \eta_0 \cdots \eta_k \in \{0,1\}^k$ be such that the cylinder $C= [\eta_0 \cdots \eta_k] \subseteq E_1$ with $k=m_2(\omega)=: m_2(\eta)$ for each $\omega \in C$ and write $l_0(\eta), m_1(\eta),l_1(\eta)$ for the values $l_0(\omega), m_1(\omega), l_1(\omega)$, $\omega \in C$, respectively. From Lemma~\ref{l:reciprocalpha} we get
\[ \begin{split}
\int_{C} l_2-m_2 \, d\nu_2 \ge \ & -\frac{p_0L}{2} (m_2(\eta)+p) \nu_2(C)\\
 \ge \ & c (l_1(\eta)+p) \nu_2(C) \ge c(l_0(\eta) + l_1(\eta) - m_1(\eta)+p)\nu_2 (C).
 \end{split}\]
Then \eqref{e:roleofrho} gives
\[ \int_{E_1} l_2-m_2 \, d\nu_2 \ge \nu_2(E_1) c (cp + c(cp+p)+p) \ge (1-\rho)^2 cp (1+c)^2.\]
Continuing, we find for each $n \ge 1$ and $\eta = \eta_0 \cdots \eta_k \in \{0,1\}^k$ for which the cylinder $C = [\eta_0\cdots\eta_k] \subseteq E_{n-1}$ satisfies $k = m_n(\omega)=: m_n(\eta) $ for each $\omega \in C$ that
\[ \int_{E_{n-1}} l_n-m_n \, d\nu_2 \ge (1-\rho)^n cp (1+c)^n.\]
Together with \eqref{q:sumS} and the assumption that $(1-\rho)(1+c)>1$ this yields
\[ \int_{\Sigma_2}  S \, d\nu_2 \ge  \sum_{i=0}^\infty cp (1-\rho)^{i} (1+c)^{i}  = \infty.  \qedhere \] 	
\end{proof}

\bibliographystyle{plain} 
\bibliography{exp-contr-biblio}

\end{document}